\pgfplotsset{
    compat=newest,
    colormap={mycolormap}{color=(lightgray) color=(white) color=(lightgray)}
}
\newtheorem{theorem}{Theorem}[section]
\newtheorem{lemma}[theorem]{Lemma}
\newtheorem{proposition}[theorem]{Proposition}
\theoremstyle{definition}
\newtheorem{definition}[theorem]{Definition}
\newtheorem{remark}[theorem]{Remark}
\newtheorem{example}[theorem]{Example}
\newtheorem{deflem}[theorem]{Definition-Lemma}
\newsavebox{\pullback}
\sbox\pullback{%
\begin{tikzpicture}%
\node at (0ex,0.5ex) (v100) {$\dag$};
\end{tikzpicture}}
\newcommand{\Ops}[1]{\mathbb{Ops}_{\substack{\text{\scalebox{0.8}{$#1$}}}}}
\newcommand{\Psh}[2]{\mathbb{Psh}_{\scalebox{0.5}{$\mathbf{#1},\! \mathbf{#2}$}}}
\newcommand{\EPsh}[2]{\mathbb{EPsh}_{\scalebox{0.5}{$\mathbf{#1},\! \mathbf{#2}$}}}
\newcommand{\SPsh}[2]{\mathbb{SPsh}_{\scalebox{0.5}{$\mathbf{#1},\! \mathbf{#2}$}}}
\newcommand{\ESPsh}[2]{\mathbb{ESPsh}_{\scalebox{0.5}{$\mathbf{#1},\! \mathbf{#2}$}}}
\newcommand{\Sh}[2]{\mathbb{Sh}_{\scalebox{0.5}{$\mathbf{#1},\! \mathbf{#2}$}}}
\newcommand{\ESh}[2]{\mathbb{ESh}_{\scalebox{0.5}{$\mathbf{#1},\! \mathbf{#2}$}}}
\newcommand{\iuv}[2]{\mathfrak{i}_{\scalebox{0.8}{$\scriptstyle #1$}\mkern-0.1mu\scalebox{0.9}{$\scriptstyle, $}\mkern-0.1mu\scalebox{0.8}{$\scriptstyle #2$}}}
\newcommand{\iuvop}[3]{\mathfrak{i}_{\scalebox{0.8}{$\scriptstyle #1$}\mkern-0.7mu\scalebox{0.8}{$\scriptstyle , $}\mkern-1mu\scalebox{0.8}{$\scriptstyle #2$}}^{\operatorname{#3}}}
\newcommand{\Uij}[2]{U_{\substack{\text{\scalebox{0.7}{$\mkern-1mu #1 \! \wedge\! #2$}}}}}
\newcommand{\subsm}[2]{{#1}_{\substack{\text{\scalebox{0.8}{$\mkern-0.8mu #2$}}}}}
\newcommand{\op}{{\substack{\text{\scalebox{0.7}{$\operatorname{op}$}}}}}
\newcommand{\Rel}[1]{\mathcal{R}_{\substack{\text{\scalebox{0.8}{$#1$}}}}}
\newcommand{\Co}[1]{{#1}_{\substack{\text{\scalebox{0.7}{$\mkern-1.5mu 0$}}}}}
\newcommand{\mbf}[1]{\mathbf{#1}}
\newcommand{\limi}[1]{\operatorname{lim}\!#1}
\newcommand{\Goi}[2]{{#1} \mkern-0.2mu (\mkern-1mu #2 \mkern-1mu)}
\newcommand{\Goij}[3]{{#1} \mkern-0.2mu (\mkern-1mu #2 \mkern-1mu ,\mkern-1mu #3 \mkern-1mu )}
\newcommand{\glI}[1]{\mathbb{Gl}(\mathrm{#1})}
\newcommand{\Gnij}[2]{{#1} \mkern-0.2mu (\mkern-1.2mu #2  \mkern-1.2mu )}
\newcommand{\dindi}[3]{{#1}_{{\substack{\text{\scalebox{0.8}{$\mkern-1mu #2$}}}}_{{\mkern-1mu}_{\substack{\text{\scalebox{0.7}{$\mkern-1mu #3$}}}}}}}
\newcommand{\dindiv}[4]{{#1}_{{\substack{\text{\scalebox{0.8}{$\mkern-1mu #2$}}}}_{{\mkern-1mu}_{\substack{\text{\scalebox{0.7}{$\mkern-1mu #3$}}}_{{}_{\substack{\text{\scalebox{0.6}{$\mkern-1mu #4$}}}}}}}}}
\newcommand{\dindgiv}[5]{{#1}_{{\substack{\text{\scalebox{0.6}{$\mkern-1mu #2$}}}_{{{}_{\substack{\text{\scalebox{0.5}{$\mkern-1mu #3$}}}_{{{}_{\substack{\text{\scalebox{0.6}{$\mkern-1.2mu #4\mkern-1.2mu$}}}}}_{{}_{\substack{\text{\scalebox{0.5}{$\mkern-1mu #5$}}}}}}}}}}}}
\newcommand{\dindsi}[3]{{#1}_{{\substack{\text{\scalebox{0.8}{$\mkern-1mu #2$}}}}_{{}_{\substack{\text{\scalebox{0.7}{$#3$}}}}}}}
\newcommand{\gcov}[1]{\bm{\mathbf C}_{\mkern-1mu\substack{\text{\scalebox{0.8}{$#1$}}}}\mkern-0.1mu}
\newcommand{\gcovv}[1]{\bm{\mathbf C}_{\mkern-1mu\substack{\text{\scalebox{0.8}{$#1$}}}}\mkern-0.1mu}
\newcommand{\proj}[2]{{\pi}_{\substack{\text{\scalebox{0.7}{$\mkern-1.5mu #1$}}}}^{\substack{\text{\scalebox{0.7}{$\mkern-1.5mu #2$}}}}}
\newcommand{\suchthat}{\;\ifnum\currentgrouptype=16 \middle\fi|\;}
\newcommand\restr[2]{{
\left.\kern-\nulldelimiterspace 
#1 
\vphantom{\big|} 
\right|_{#2} 
}}
\newcommand*{\doublerightarrow}[2]{\mathrel{
		\settowidth{\@tempdima}{$\scriptstyle#1$}
		\settowidth{\@tempdimb}{$\scriptstyle#2$}
		\ifdim\@tempdimb>\@tempdima \@tempdima=\@tempdimb\fi
		\mathop{\vcenter{
				\offinterlineskip\ialign{\hbox to\dimexpr\@tempdima+1em{##}\cr
					\rightarrowfill\cr\noalign{\kern.5ex}
					\rightarrowfill\cr}}}\limits^{\!#1}_{\!#2}}}
\title{\bfseries A note on gluing: a pillar of algebraic geometry}
\author{Sophie Marques and Damas Mgani}
\begin{document}

\begin{abstract}

\noindent \textsf{Keywords:} Gluing, topological spaces, refinement, category theory, functor, limit, Grothendieck topology, covering.

\noindent \textsf{2020 Math. Subject Class:} 14A15, 18F20, 18F60.

\noindent \begin{center}
\rm e-mail: \href{mailto:d smarques@sun.ac.za}{ smarques@sun.ac.za}

\it
Department of Mathematical Sciences, 
University of Stellenbosch, \\
Stellenbosch, 7600, 
South Africa\\
\&
NITheCS (National Institute for Theoretical and Computational Sciences), \\
South Africa \\ \bigskip

\rm e-mail: \href{mailto:d.mgani99@gmail.com}{d.mgani99@gmail.com}

\it
Department of Mathematical Sciences, 
University of Dar es Salaam, \\
Dar es Salaam, 
Tanzania.

\end{center} 

\end{abstract}

\setcounter{tocdepth}{3}
\maketitle
  \tableofcontents

\newpage
\begin{abstract} 
This paper examines the concept of gluing, placing it within its most general categorical context and tracing its foundational role in the broader architecture of algebraic geometry.
\end{abstract}

 \section*{Acknowledgement} 
We are grateful to George Janelidze and James Gray for their insightful suggestions regarding Section 2. in particular 2.1.1. We also thank St\'ephane Vinatier for his kind assistance in producing Figure~\ref{fig7}.

\section*{Introduction} 
Alexander Grothendieck introduced the concept of schemes in the mid-20th century, with its foundational development laid out in \cite{grothendieck1971}. This revolutionary notion emerged from several converging lines of inquiry: the need to systematically address local-global principles, the increasing prominence of descent theory and field extensions, and the earlier work of Weil on the foundations of algebraic varieties \cite{weil}. Grothendieck's central idea was to reinterpret a commutative ring as a geometric object, specifically, as a pair consisting of a topological space (the spectrum of the ring, endowed with the Zariski topology) and a structure sheaf of rings. This pairing captures both the local behavior of functions and their global organization, providing a powerful and flexible framework.

This construction gives rise to a new category, equivalent to the opposite of the category of commutative rings, but with a significant structural enhancement: it admits certain categorical limits, those arising from gluing. By formally closing this category under gluing operations, Grothendieck defined the category of schemes. This enriched framework not only encompasses classical objects such as projective spaces, but also offers a more robust and flexible setting for studying algebraic varieties. In particular, it ensures better behavior under standard constructions, such as fiber products, and accommodates more general forms of descent and base change.

We focus on this core construction that underlies much of the theory: the process of gluing. Our objective is to pause and examine the concept of gluing in depth, placing it within its most general categorical context and tracing its foundational role in the broader architecture of algebraic geometry. We take care to analyze its formal definition and explore its connections to key concepts in algebraic geometry. Through this investigation, we aim to highlight the central importance of gluing in the theory and to clarify its logical structure and far-reaching implications.

Some of the results presented may be familiar to readers with expertise in category theory or/and algebraic geometry, as these topics are well-documented in the existing literature. However, to the best of our knowledge, many of the results without a citation may not have been stated in the precise form we develop. Rather than relying directly on existing formulations, we have deliberately relaxed assumptions where possible or decomposed results into more elementary components in order to better isolate the essential conditions at play. This methodological choice reflects our broader aim, not merely to reproduce known results, but to clarify their logical foundations and to identify the minimal structural requirements for their validity. We hope that the clarity this approach has brought us may also prove useful to others in the field. Regarding notation, we have fused terminology from both algebraic geometry and category theory, aiming to strike a balance between the two perspectives.

The paper is structured as follows. 

In Section 2, following a brief review of notational conventions, we establish the categorical framework that underpins our study of gluing. We introduce two classes of indexing categories, split and non-split truncated power set categories, that form the basis for defining gluing via limits of what we call gluing functors (see Definition \ref{gluingfunctor}). In Section 2.1.1, we present the non-split version, originally suggested to us by James Gray and George Janelidze, which, as they anticipated, proves to be theoretically sufficient. However, for practical reasons that arise in Section 3 and in the final section, we found it more convenient to work with the split variant, which aligns more closely with our original formulation; this motivation and distinction is explained in Section 2.1.2. As a result, both approaches are included, and we default to the simpler non-split version whenever feasible. Within this framework, we formalize the notion of universal gluing in Section 2.4. In Section 2.5, we demonstrate that in categories equipped with products and equalizers, the gluing process reduces, as anticipated, to a standard equalizer construction. In section 2.6, we also identify and examine a class of gluable categories, in which every functor from a truncated power set category admits a limit; Grothendieck categories emerge as an example. In Section 2.7, we analyze how the {\sf Hom} functor interacts with gluing, providing the first indication of its connection to Grothendieck topologies and sheaf theory. Finally, in Section 3, we introduce the concept of refinement between gluing functors and describe how such functors can be formally composed.

Section 3 shifts focus to Grothendieck topologies and sites, where we revisit the standard definitions. To each sink, we associate a canonical functor. The glued object, defined as the limit of a gluing functor, naturally determines a new sink, which in turn gives rise to a gluing functor (see Lemma \ref{gluthru}).  In section 4.2, we introduce the notion of effective gluing on a Grothendieck site. This generalization of the cocycle condition is carefully examined, and we verify that it coincides with the classical condition in the typical context (see Proposition \ref{eqrel}). We justify the term "effective" by proving that the canonical functor is an effective split gluing functor if and only if the family defined by covering maps forms an effective epimorphism (Proposition \ref{lemm39}),  connecting the notion of effective epimorphism with gluing. In section 4.4, we propose an alternative approach to composing gluing functors. The section concludes with the definition of (pre)sheaves in terms of gluing and recalls the concepts of direct image and restriction of a presheaf.

In Section 6, we examine the gluing of morphisms of (pre)sheaves and (pre)sheaves. While presheaves, separated presheaves, and sheaves can be glued using similar techniques, we identify a crucial limitation: the class of gluable presheaves lacks a straightforward categorical characterization. In fact, we have not yet found a general categorical expression for this property, and our results suggest it may not behave well from a categorical perspective. We intentionally omit the cocycle condition from the outset, as it is well-known and easily verified to be unnecessary for the construction. Instead, we reinterpret the cocycle condition as a property of gluing functors and explore its role through the definition of effective gluing functors in Section 6.3. This shift in perspective allows us to isolate the core of the cocycle condition within the broader categorical structure (see Proposition \ref{preashf}).

In Section 7, we introduce the notion of enriched (pre)sheaves, extending the idea of ringed topological spaces to a more general categorical setting. Here, we simultaneously consider an object in the site and an associated (pre)sheaf, and we develop a general gluing framework that encompasses the classical case of ringed spaces. We provide conditions under which gluing functors can be induced both on the site and on the (pre)sheaves, and we show how gluing in the enriched setting arises from these two components.

While much remains to be explored in the foundations of algebraic geometry, we found that pausing to examine the notion of gluing proved to be particularly insightful. Along the way, we identified several instances where we had previously overlooked which assumptions were truly necessary and why. By stepping back and attempting to generalize constructions to arbitrary categories wherever possible, and by refining statements to their essential content, we gained a clearer understanding of where the core of the concept lies. This reflection ultimately shaped the focus and motivation of the present paper.

\newpage
\section{Notation}
\noindent In this section, we introduce a few notation that will be used consistently throughout the paper.
\noindent Given $X,Y, U$ objects of $\mathbb{C}$, $f: X\rightarrow U$, $g: Y\rightarrow U$ be morphisms such that $X\subsm{\times}{\scalebox{0.6}{$U$}} Y$ exists, we denote $\proj{k}{X\subsm{\times}{\scalebox{0.6}{$U$}} Y}$, the canonical $k^{th}$ projection from the pullback identified by the subscript, for any $k\in \{ 1, 2\}$. Given a family \(\subsm{(U_i)}{i\!\in\! \rm{I}}\) of objects in \(\mathbb{C}\):  
\begin{itemize}  
\item If the coproduct \(\subsm{\coprod}{i\!\in\! \rm{I}} U_i\) exists and, for each \(i \in \rm{I}\), we are given a morphism \(\eta_i: U_i \rightarrow Z\) in \(\mathbb{C}\), we denote by \(\subsm{\langle \eta_i \rangle}{i\!\in\! \rm{I}}\) the unique morphism from \(\subsm{\coprod}{i\!\in\! \rm{I}} U_i\) to \(Z\) determined by the universal property of the coproduct.  

\item If the product \(\subsm{\prod}{i\!\in\! \rm{I}} U_i\) exists and, for each \(i \in \rm{I}\), we are given a morphism \(\rho_i: Z \rightarrow U_i\) in \(\mathbb{C}\), we denote by \(\subsm{\langle \rho_i \rangle}{i\!\in\! \rm{I}}\) the unique morphism from \(Z\) to \(\subsm{\prod}{i\!\in\! \rm{I}} U_i\) determined by the universal property of the product.  
\end{itemize}
In this paper, we adopt the following notational conventions:
\begin{itemize}
    \item $\mathbb{C}$ denotes an arbitrary category.
    \item $\mathrm{I}$ denotes an index set.
    \item $R$ denotes a unital commutative ring.
    \item $\mathbb{Sets}$ denotes the category of sets.
    \item $\mathbb{Top}$ denotes the category of topological spaces, and $\mathbb{oTop}$ denotes the subcategory of $\mathbb{Top}$ whose morphisms are open continuous maps.
    \item $\mathbb{Rings}$ denotes the category of (not necessarily commutative) rings.
    \item $\mathbb{Ab}$ denotes the category of abelian groups.
    \item $\mathbb{Grp}$ denotes the category of groups.
    \item $R\text{-}\mathbb{Mod}$ denotes the category of $R$-modules.
    \item $R\text{-}\mathbb{Alg}$ denotes the category of $R$-algebras.
\end{itemize}
For foundational concepts in category theory, we refer the reader to seminal works such as \cite{adamek2009abstract}, \cite{Joji}, and \cite{maclane}. 
\section{Defining gluing categorically} 
\subsection{Categories of interest} 
In this section, we define two categories that will serve as the indexing categories for the diagrams used to construct limits in the context of gluing.
\subsubsection{Truncated power set category of $\mathrm{I}$} \label{truncated}
We begin by defining a truncated power set category of a set. Specifically, this paper will focus on truncated power set categories of order $2$ and define gluing across categories.

\begin{definition} \label{trunc}  
Let $n\in \mathbb{N}$. The {\sf truncated power set category of \(\mathrm{I}\) of order \(n\)}, denoted by \(\mathbb{P}_n (\mathrm{I})\), is defined as follows:  
\begin{enumerate}  
    \item {\sf Objects:} The subsets \(S \subseteq \mathrm{I}\) such that \(1 \leq |S| \leq n\).  
    \item {\sf Morphisms:} Inclusions between these subsets. For any subsets \(S, T \subseteq \mathrm{I}\) with \(S \subseteq T\), the inclusion morphism is denoted by \(\mathfrak{i}_{S, T}\).   
    We use the notation \(\mathfrak{i}_{i, j}\) to represent the inclusion \(\{i\} \subseteq \{i, j\}\) for all \(i, j \in \mathrm{I}\). Moreover, the inclusion \(\mathfrak{i}_{i, i}\) corresponds to the identity morphism on \(\{i\}\) for all \(i \in \mathrm{I}\).  
\end{enumerate}  
In other words, \(\mathbb{P}_n (\mathrm{I})\) is a poset category, which is the full subcategory of the power set category, whose objects are precisely the subsets of \(\mathrm{I}\) with at least one element and at most \(n\) elements.
\end{definition}

\noindent In subsequent examples, we will illustrate the truncated power set categories of order $2$ and $3$ for a set $\mathrm{I}$, assuming $\mathrm{I}$ has a cardinality of four or fewer elements.

\begin{example} 
\begin{enumerate} 
\item The following three figures represent  the truncated power set category of $\mathrm{I}$ of order $2$ omitting the identity morphisms and the names of the morphisms where $\mathrm{I}=\{i,j\}$, $\mathrm{I}=\{i,j, k \}$ and $\mathrm{I}=\{i,j, k,l \}$ respectively where $i,j,k, l $ are distinct elements:

\begin{figure}[H]
    \centering
    \begin{subfigure}[b]{0.2\textwidth}
       {\tiny\begin{tikzcd}[column sep=normal]
\scalebox{0.7}{$\{ i\}$} \arrow{rd}&           & \scalebox{0.7}{$\{j\}$} \arrow{ld}\\
                                             & \scalebox{0.7}{${\{i,j\}}$} &                                            
\end{tikzcd}}
        \caption{$\mathrm{I}=\{i,j\}$}
    \end{subfigure}
    \begin{subfigure}[b]{0.3\textwidth}
     {\tiny\begin{tikzcd}[column sep=normal]
      & \\ & & \scalebox{0.7}{${\{i,j\}}$} & \\
      & \scalebox{0.7}{$\{j\}$} \arrow{ru} \arrow{dd} &                                                                                                           & \scalebox{0.7}{$\{i\}$} \arrow{lu} \arrow{dd} \\&                                                                                                &  &  \\& \scalebox{0.7}{${\{j,k\}}$} & & \scalebox{0.7}{${\{i,k\}}$} \\&                                                                                                           & \scalebox{0.7}{$\{k\}$} \arrow{lu} \arrow{ru} &                                                                                                          
\end{tikzcd}}

        \caption{$\mathrm{I}=\{i,j,k\}$}
    \end{subfigure}
    ~ 
    \begin{subfigure}[b]{0.35\textwidth}

{\tiny\begin{tikzpicture}[scale=1.0, commutative diagrams/every diagram,
    declare function={R=2;Rs=R*cos(60);}]
 \path
  (150:Rs) node(A) {\scalebox{0.7}{$\{i,k\}$}} 
  (270:Rs) node(B) {\scalebox{0.7}{$\{k,l\}$}} 
  (30:Rs) node(C) {\scalebox{0.7}{$\{i,l\}$}} 
  (0,0)  node(D) {\scalebox{0.7}{$\{j\}$}} 
  (-30:R) node (X) {\scalebox{0.7}{$ \{l\}$}} 
  (90:R) node (Y) {\scalebox{0.7}{$ \{i\}$}}
  (90:0.5*R) node (J) {\scalebox{0.7}{$\{j,i\}$}}
   (-30:0.5*R) node (K) {\scalebox{0.7}{$\{j,l\}$}}
    (210:0.5*R) node (L) {\scalebox{0.7}{$\{j,k\}$}}
           
  (210:R) node (Z) {\scalebox{0.7}{$ \{k\}$}};
 \path[commutative diagrams/.cd, every arrow, every label]
 (D) edge (K)
 (D) edge (J)
 (Y) edge (A)
 (Z) edge (A)
  (D) edge (L)
   (Z) edge (L)
   (X) edge (K)
  
   (Y) edge (J)
 (Z) edge (B) 
 (X) edge (B)
  (X) edge(C)
  (Y) edge (C)

  
   
  
;

   
\end{tikzpicture}}
 \\
   \caption{$\mathrm{I}=\{i,j,k,l\}$}
  
    \end{subfigure}
\end{figure} 
\item The following three figures represent the truncated power set category of $\mathrm{I}$ of order $3$ omitting the identity morphisms and the names of the morphisms where $\mathrm{I}=\{i,j, k \}$ and $\mathrm{I}=\{i,j, k,l \}$ respectively where $i,j,k, l $ are distinct elements:
\hspace{2em}
\begin{figure}[H]
    \begin{subfigure}[b]{0.2\textwidth}
     {\tiny\begin{tikzcd}[column sep=normal]
                                                                                                &                                                                                                           \\
      &                                                                                                           & \scalebox{0.7}{${\{i,j\}}$} \arrow[]{dd}{}                                          &                                                                                                           \\
      & \scalebox{0.7}{$\{j\}$} \arrow[]{ru}{} \arrow[labels=description]{dd}{} &                                                                                                           & \scalebox{0.7}{$\{i\}$} \arrow[]{lu}{} \arrow[]{dd}{} \\
      &                                                                                                   &   \scalebox{0.7}{$ \{i,j,k\}$}
                                                                                               &                                                                                                        \\
      & \scalebox{0.7}{${\{j,k\}}$} \arrow[]{ru}{}                                          &                                                                                                           & \scalebox{0.7}{${\{i,k\}}$} \arrow[]{lu}{}                                         \\
      &                                                                                                           & \scalebox{0.7}{$\{k\}$} \arrow[]{lu}{} \arrow[]{ru}{} &                                                                                                          
\end{tikzcd}}

        \caption{$\mathrm{I}=\{i,j,k\}$}
    \end{subfigure}
     \qquad  \qquad \qquad
    \begin{subfigure}[b]{0.2\textwidth}
    
\tdplotsetmaincoords{80}{115}
\begin{tikzpicture}[scale=0.8, tdplot_main_coords,baseline]
	\pgfmathsetmacro{\arista}{3}
	\pgfmathsetmacro{\unidad}{1.0}
	\coordinate[label=center:$\scalebox{0.7}{$\substack{ \ \\ \\
	 \ \{i,l\} \ \\   \\ \ }$}$, circle]  (13) at (\arista,0,0);
	\coordinate[label=center:$\scalebox{0.7}{$\substack{\ \\ \\ \\ \\ \  \{j,l\} \ \\ \\ \\ \\ \  }$}$, circle]  (14) at (0,\arista,0);
	\coordinate[label=center:$\scalebox{0.7}{$\substack{\ \\  \{k,j\} \ \\  }$}$, circle]  (15) at (-\arista,0,0);
	\coordinate[label=center:$\scalebox{0.7}{$\substack{\ \\ \\ \{i,k\} \ \\ \\ }$}$, circle]  (16) at (0,-\arista,0);
	\coordinate[label=center:$\scalebox{0.7}{$\substack{\ \\ \\ \\ \ \{i,j\} \ \\ \\ \\ \ }$}$, circle]  (17) at (0,0,\arista);
	\coordinate[label=center:$\scalebox{0.7}{$\substack{\  \\ \\ \{k,l\} \  \\ \\ }$}$, circle]  (18) at (0,0,-\arista);
	\coordinate[label=center:$\scalebox{0.7}{$\substack{\ \\ \\ \{j\} \ \\ \\ }$}$, circle]  (19) at (-0.5*\arista,0.5*\arista,0.5*\arista);
	\coordinate[label=center:$\scalebox{0.7}{$\substack{\ \\  \{i,j,l\} \ \\ \\ }$}$, circle]  (20) at (0.5*\arista,0.5*\arista,0.5*\arista);
	\coordinate[label=center:$\scalebox{0.7}{$\substack{\ \\  \{i,k,j\} \ \\ \\ }$}$, circle]  (21) at (-0.5*\arista,-0.5*\arista,0.5*\arista);
	\coordinate[label=center:$\scalebox{0.7}{$\substack{\ \\ \\ \{i\} \ \\ \\ }$}$, circle]  (22) at (0.5*\arista,-0.5*\arista,0.5*\arista);
	\coordinate[label=center:$\scalebox{0.7}{$\substack{\ \\ \\ \{k,j,l\} \ \\ \\ }$}$, circle]  (23) at (-0.5*\arista,0.5*\arista,-0.5*\arista);
	\coordinate[label=center:$\scalebox{0.7}{$\substack{\ \\ \\ \{l\} \ \\ \\ }$}$, circle]  (24) at (0.5*\arista,0.5*\arista,-0.5*\arista);
	\coordinate[label=center:$\scalebox{0.7}{$\substack{\ \\ \\ \{k\} \ \\ \\ }$}$, circle]  (25) at (-0.5*\arista,-0.5*\arista,-0.5*\arista);
	\coordinate[label=center:$\scalebox{0.7}{$\substack{\ \\ \\ \{i,k,l\} \ \\ \\ }$}$, circle]  (26) at (0.5*\arista,-0.5*\arista,-0.5*\arista);

	\draw[dashed,fill=cyan!35,opacity=0.75] (25) -- (15) ;
	
		\draw[fill=cyan!35,opacity=0.75]  (18) -- (23) ;	
		
			\draw[dashed,fill=cyan!35,opacity=0.75]  (25) -- (18);
			
				\draw[dashed,fill=cyan!35,opacity=0.75]   (15)-- (23);

		\draw[dashed,fill=cyan!35,opacity=0.75] (16) -- (21) ;

			
				\draw[dashed,fill=cyan!35,opacity=0.75]   (15)-- (21);

	
		\draw[dashed,fill=cyan!35,opacity=0.75]  (19) -- (15) ;	
		
			

		\draw[fill=cyan!35,opacity=0.75] (22) -- (17) ;
	
		
			
				\draw[dashed,fill=cyan!35,opacity=0.75]   (17)-- (21);

	
		
			\draw[fill=cyan!35,opacity=0.75]  (14) -- (23);
			
			

		\draw[fill=cyan!35,opacity=0.75] (16) -- (26) ;
	
		
			
				\draw[dashed,fill=cyan!35,opacity=0.75]   (25)-- (16);

	
		
			\draw[fill=cyan!35,opacity=0.75]  (13) -- (26);
			
				\draw[fill=cyan!35,opacity=0.75]   (22)-- (16);
	
	
		\draw[fill=cyan!35,opacity=0.75]  (24) -- (18) ;	
		
			
				\draw[fill=cyan!35,opacity=0.75]   (24)-- (14);

		\draw[fill=cyan!35,opacity=0.75] (17) -- (20) ;
	
		\draw[fill=cyan!35,opacity=0.75]  (19) -- (14) ;	
		
			\draw[fill=cyan!35,opacity=0.75]  (14) -- (20);
			
				\draw[fill=cyan!35,opacity=0.75]   (19)-- (17);

	
		\draw[fill=cyan!35,opacity=0.75]  (22) -- (13) ;
		
			\draw[fill=cyan!35,opacity=0.75]  (13) -- (20);
			

	
		\draw[fill=cyan!35,opacity=0.75]  (18) -- (26) ;	
		
			

	
		\draw[fill=cyan!35,opacity=0.75]  (24) -- (13) ;	
		
			
	
	%
	\end{tikzpicture} \\

        \caption{$\mathrm{I}=\{i,j,k,l\}$}
    \end{subfigure}
   
\end{figure} 
\end{enumerate} 
\end{example}

\subsubsection{Split truncated power set category of $\rm{I}$ of order $2$}
In this section, we introduce a split variant of the category defined in Section~\ref{truncated}. This refinement becomes especially pertinent in Section~\ref{grothe}, where Grothendieck topologies are discussed and constructions such as fibered products \( U_i \times_U U_i \) play a central role. The split version also proves advantageous in concrete contexts, for example, in Section~6, where working without the axiom of choice is desirable. Furthermore, in Section~4.2, effective gluing functors are defined only within the split framework. Nonetheless, we introduce a stronger notion of effectiveness that is specifically adapted to the non-split setting.


\begin{definition} \label{trunc}  
The \textsf{split truncated power set category of \rm{I} of order $2$}, denoted by \(\mathbb{S}_2(\mathrm{I})\), is defined as follows:

\begin{enumerate}
    \item \textsf{Objects:} The objects are elements of \(\mathrm{I}\) and ordered pairs \((i,j) \in \mathrm{I}^2\). That is, 
    \[
    \mathrm{Ob}(\mathbb{S}_2(\mathrm{I})) = \mathrm{I} \cup \mathrm{I}^2.
    \]

    \item \textsf{Morphisms:} The morphisms are precisely as follows:
    \begin{itemize}
        \item For each \(i, j \in \mathrm{I}\), there is a morphism from \(i\) to \((i,j)\), denoted \(\mathfrak{i}^s_{i,j} : i \to (i,j)\).
        \item For each \(i, j,k \in \mathrm{I}\), there is an invertible morphism \(\tau_{i,j} : (i,j) \to (j,i)\) that is not identity when $i=j$ such that $\tau_{i,j}^{-1}
= \tau_{j,i}$.      
\item For every object \(x \in \mathbb{S}_2(\mathrm{I})\), there exists an identity morphism \(\mathrm{id}_x : x \to x\).
        \item All morphisms are composable whenever the domain and codomain match appropriately.
    \end{itemize}
\end{enumerate}

\end{definition}

\begin{example}
	The following three figures represent  the split truncated power set category of \rm{I} of order $2$ omitting the identities morphisms when $\mathrm{I}=\{i,j\}$, and  $\mathrm{I}=\{i,j, k \}$ respectively:
	\hspace{2em}

\begin{enumerate}
	\item 
	
	\begin{figure}[H]
		\centering
		
		{\tiny\begin{tikzcd}[scale=0.2]
				& 	\scalebox{0.7}{$\{i\}$} \arrow[]{ld}{} \arrow[]{rd}{} &                                               &  &                                               & 	\scalebox{0.7}{$\{j\}$} \arrow[]{ld}{} \arrow[]{rd}{} &         \\
				\scalebox{0.7}{${\{i,i\}}$} \arrow[ loop below]{}{} \arrow[ loop above]{}{} &                                                            & \scalebox{0.7}{${\{i,j\}}$} \arrow[ bend left]{rr}{}  &  & \scalebox{0.7}{${\{j,i\}}$} \arrow[ bend left]{ll}{} &                                                            & \scalebox{0.7}{${\{j,j\}}$}\arrow[ loop below]{}{} \arrow[ loop above]{}{}
		\end{tikzcd}}
	\end{figure}
	
	\item
	\begin{figure}[H]
		{\tiny 
			\begin{tikzcd}[scale=0.2,row sep=normal]
				&  & {\scalebox{0.7}{$\{i,j\}$}}  \arrow[ bend left ]{rrrr}{} &  &  &  & {\scalebox{0.7}{$\{j,i\}$}} \arrow[ bend left]{llll}{} &    &   \\
				{\scalebox{0.7}{$\{i,i\}$}}\arrow[ loop below]{}{} \arrow[ loop above]{}{}  & {\scalebox{0.7}{$\{i\}$}} \arrow[]{ru}{} \arrow[]{l}{} \arrow[]{d}{} &   &  &  &  &  & {\scalebox{0.7}{$\{j\}$}} \arrow[]{lu}{}\arrow[]{r}{} \arrow[]{d}{} & {\scalebox{0.7}{$\{j,j\}$}} \arrow[ loop below]{}{} \arrow[ loop above]{}{} \\
				& {\scalebox{0.7}{$\{i,k\}$}} \arrow[ bend left]{rd}{}  &   &  &   &  & & {\scalebox{0.7}{$\{j,k\}$}} \arrow[ bend right]{ld}{}         &   \\
				&  & {\scalebox{0.7}{$\{k,i\}$}} \arrow[ bend left]{lu}{}   &  &   &  & {\scalebox{0.7}{$\{k,j\}$}} \arrow[ bend right]{ru}{}  &  &   \\
				&   &  &  & {\scalebox{0.7}{$\{k\}$}} \arrow[]{rru}{} \arrow[]{llu}{} \arrow[]{d}{} &  & & &   \\
				&  &   &  & {\scalebox{0.7}{$\{k,k\}$}} \arrow[ loop right]{}{} \arrow[ loop left]{}{}  &  &   &  &  
			\end{tikzcd}
		}
	\end{figure}

\end{enumerate}
\end{example}

\begin{remark}
The gluing category \(\mathbb{S}_2(\mathrm{I})\) can be understood as a splitting refinement of $\mathbb{P}_2(I)$. This category refines the structure by distinguishing between ordered pairs and introducing morphisms that represent gluing relations. Those categories are not equivalent.
\end{remark}

\noindent The advantage of using the split truncated power set category lies in its practical convenience: within concrete categories, it avoids the need to explicitly sort unordered pairs using an auxiliary map. This makes it easier to work with in applications. Nonetheless, as we will see, from a purely theoretical perspective, the (non-split) truncated power set category is sufficient to define a robust notion of a gluing functor. 
\begin{definition}
A \textsf{pair sorting map of \( \mathrm{I} \)} is a function
\[
c: \operatorname{Ob}(\mathbb{P}_2(\mathrm{I})) \longrightarrow \operatorname{Ob}(\mathbb{S}_2(\mathrm{I}))
\]
that assigns to each unordered pair \( \{i, j\} \subseteq \mathrm{I} \) (with \( i \neq j \)) a chosen ordered pair \( (i, j) \) or \( (j, i) \), i.e.,
\[
c(\{i, j\}) \in \{(i, j), (j, i)\},
\]
in such a way that the assignment is made systematically for all \( i, j \in \mathrm{I} \) thanks to the axiom of choice. For singleton sets \( \{i\} \), the map is defined by \( c(\{i\}) = (i) \).
\end{definition}

\begin{definition}
Let \( \mathrm{I} \) be an index set, and let \( c \) be a pair sorting map on \( \mathrm{I} \). We define the following functors:

\begin{itemize}
    \item The functor \( \mathbf{A}_c : \mathbb{P}_2(\mathrm{I}) \to \mathbb{S}_2(\mathrm{I}) \):
    \begin{itemize}
        \item On objects, \( \mathbf{A}_c \) acts as \( c \).
        \item On morphisms, given \( i,j \in \mathrm{I} \) with $i\neq j$ and \( c(\{i,j\}) = (i,j) \), it sends \( \mathfrak{i}_{i,j} \mapsto \mathfrak{i}^s_{i,j} \) and \( \mathfrak{i}_{j,i} \mapsto \tau_{j,i} \circ \mathfrak{i}^s_{j,i} \).
    \end{itemize}

    \item The functor \( \mathbf{B}_I : \mathbb{S}_2(\mathrm{I}) \to \mathbb{P}_2(\mathrm{I}) \): for all \( i,j \in \mathrm{I} \) with $i \neq j$,
    \begin{itemize}
        \item On objects, it sends \( (i) \mapsto \{i\} \) and \( (i,j) \mapsto \{i,j\} \).
        \item On morphisms, it sends \( \tau_{i,j} \mapsto \operatorname{id}_{\{i,j\}} \), \( \tau_{i,i} \mapsto \operatorname{id}_{\{i\}} \), \( \mathfrak{i}^s_{i,i} \mapsto \operatorname{id}_{i} \) and \( \mathfrak{i}^s_{i,j} \mapsto \mathfrak{i}_{i,j} \),
    \end{itemize}
    
    \item The functor \( \mathbf{A}_c' : \mathbb{P}_2(\mathrm{I} \coprod \mathrm{I}) \to \mathbb{S}_2(\mathrm{I}) \):
    \begin{itemize}
        \item On objects, it sends, for all $i,j \in \mathrm{I}$ with $i\neq j$, \( c(\{i,j\}) = (i,j) \), and $k \in \{1,2\}$,
        \[
        (i,k) \mapsto i, \quad    \{(i,1), (i,2)\} \mapsto (i,i),
        \]
        \[
        \{(i,1), (j,k)\} \mapsto (i,j), \quad \{(i,2), (j,k)\} \mapsto (j,i).
        \]
        \item On morphisms, it sends:
        \begin{align*}
            \mathfrak{i}_{(i,1),(j,k)} &\mapsto \mathfrak{i}^s_{i,j}, \\
            \mathfrak{i}_{(j,k),(i,1)} &\mapsto \tau_{j,i} \circ \mathfrak{i}^s_{j,i}, \\
            \mathfrak{i}_{(i,2),(j,k)} &\mapsto \tau_{i,j} \circ \mathfrak{i}^s_{i,j}, \\
            \mathfrak{i}_{(j,k),(i,2)} &\mapsto \mathfrak{i}^s_{j,i}.
        \end{align*}
    \end{itemize}
\end{itemize}

\noindent We note that \( \mathbf{B}_I \circ \mathbf{A}_c = \operatorname{id}_{\mathbb{P}_2(\mathrm{I})} \), i.e., \( \mathbf{B}_I \) is a left inverse to \( \mathbf{A}_c \).
\end{definition}
\subsection{Gluing functors}
We introduce gluing functors as tools for formalizing gluing operations across various mathematical categories.

\begin{definition} \label{gluingfunctor}
Let \( \mathrm{I} \) be a set and let \( \mathbb{C} \) be a category.  
\begin{enumerate}
    \item A \(\mathbb{C}\)\textsf{-gluing functor of type \(\mathrm{I}\)} (respectively, a split \(\mathbb{C}\)\textsf{-gluing functor of type \(\mathrm{I}\)}) is a functor 
    \[
        \mathbf{G} : \mathbb{P}_2(\mathrm{I}) \to \mathbb{C} \quad \text{(resp. } \mathbf{G} : \mathbb{S}_2(\mathrm{I}) \to \mathbb{C} \text{)}
    \]
    such that the limit \( \lim \mathbf{G} \) exists in \( \mathbb{C} \). For convenience, we write \( \mathbf{G}(i) := \mathbf{G}(\{i\}) \) (resp. \( \mathbf{G}(i) := \mathbf{G}((i)) \)) and \( \mathbf{G}(i,j) := \mathbf{G}(\{i,j\}) \) for all \( i, j \in \mathrm{I} \) with $i\neq j$ (resp. \( \mathbf{G}(i,j) := \mathbf{G}((i,j)) \) for all \( i, j \in \mathrm{I} \)).

    \item A functor \( \mathbf{G} \) is called a \(\mathbb{C}\)\textsf{-gluing functor} (resp. split \(\mathbb{C}\)\textsf{-gluing functor}) if there exists a set \( \mathrm{J} \) such that \( \mathbf{G} \) is a gluing functor of type \( \mathrm{J} \).

    \item Let \( \mathbf{G} \) be a \(\mathbb{C}\)-gluing functor of type \( \mathrm{I} \) (respectively, a split \(\mathbb{C}\)-gluing functor of type \(\mathrm{I}\)). If \( (L, (\pi_a^L)_{a \in \mathbb{P}_2( \mathrm{I})}) \) (respectively, \( (L, (\pi_a^L)_{a \in \mathbb{S}_2( \mathrm{I})}) \)) is a cone over \( \mathbf{G} \) that is isomorphic, as a cone, to \( \lim \mathbf{G} \), we say that \( L \) is a \textsf{glued-up object over \( \mathbf{G} \)} through \( (\pi_i^L)_{i \in \mathrm{I}} \).

    \item The category \( \mathbb{C} \) is said to be \textsf{gluable} if for any set \( \mathrm{J}\), every functor from \( \mathbb{P}_2(\mathrm{J}) \) to \( \mathbb{C} \) is a \(\mathbb{C}\)-gluing functor.

\end{enumerate}
\end{definition}

\begin{remark}
\begin{itemize}
    \item Any functor \( \mathbf{G} : \mathbb{P}_2(\mathrm{I}) \to \mathbb{C} \) defines a diagram indexed by a poset category, and thus induces a direct system in \( \mathbb{C} \). Therefore, whenever such a functor \( \mathbf{G} \) is a \(\mathbb{C}\)-gluing functor of type \( \mathrm{I} \), the limit \( \lim \mathbf{G} \) is a direct limit.
    
    \item Given a $\mathbb{C}$-gluing functor $\mathbf{G}$, a glued up object is a joint pullbacks $\mathbf{G}(a)$ and $\mathbf{G}(b)$ over $\mathbf{G}(a,b)$ for all $a, b\in I$ with $| \{a, b\} |=2 $.
    \item Let \( c \) be a pair sorting map on \( \mathrm{I} \), and let \( \mathbf{G} : \mathbb{S}_2(\mathrm{I}) \to \mathbb{C} \) be a functor such that:
    \[
    \mathbf{G}(i,i) = \mathbf{G}(i), \quad \mathbf{G}(\mathfrak{i}^s_{i,i}) = \mathbf{G}(\tau_{i,i}) = \operatorname{id}_{\mathbf{G}(i)} \quad \text{for all } i \in \mathrm{I}.
    \] 
We note that these assumptions align with the classical gluing data used in algebraic geometry.
    Then, the cones over \( \mathbf{G} \) are in one-to-one correspondence with the cones over \( \mathbf{G} \circ \mathbf{A}_c \), where \( \mathbf{A}_c : \mathbb{P}_2(\mathrm{I}) \to \mathbb{S}_2(\mathrm{I}) \) is the functor associated with the pair sorting map \( c \).
    It follows that \( \mathbf{G} \) is a \(\mathbb{C}\)-split gluing functor if and only if \( \mathbf{G} \circ \mathbf{A}_c \) is a \(\mathbb{C}\)-gluing functor. \\
\noindent Furthermore, given any \(\mathbb{C}\)-gluing functor \( \mathbf{G} : \mathbb{P}_2(\mathrm{I}) \to \mathbb{C} \), there is a one-to-one correspondence between cones over \( \mathbf{G} \) and cones over \( \mathbf{G} \circ \mathbf{A}'_c \), and hence:
    \[
    \mathbf{G} \text{ is a } \mathbb{C}\text{-split gluing functor } \iff \mathbf{G} \circ \mathbf{A}'_c \text{ is a } \mathbb{C}\text{-gluing functor}.
    \]
In the remainder of this section, we work exclusively with gluing functors. The corresponding results for split gluing functors can be obtained by selecting a pair sorting map \( c \) on \( \mathrm{I} \), composing a given split gluing functor \( \mathbf{G} \) with the associated functor \( \mathbf{A}_c' \), and applying the results to the composite \( \mathbf{G} \circ \mathbf{A}_c' \). 
\end{itemize}
\end{remark}

\begin{example}
Assume $\mathbb{C}$ includes a terminal object denoted by $\mathbf{1}$. For a set $\mathrm{I}$, we consider a functor $\mathbf{G}: \mathbb{P}_2(\mathrm{I}) \rightarrow \mathbb{C}$ such that $\mathbf{G}(i, j) = \mathbf{1}$ for all $i, j \in \mathrm{I}$ with $i\neq j$. If $\lim \mathbf{G}$ exists, it is the product $\prod_{i \in \mathrm{I}} \mathbf{G}(i)$.
\end{example}

\begin{example}\label{pushut} Illustrations depict glued-up objects for \(\mathrm{I}\) with cardinality less than or equal to 3,  for each corresponding set size when  $\mathbf{G}(i, i) = \mathbf{G}(i)$ and $\mathbf{G}( \tau_{i,i})= \operatorname{id}_i$, for all $i \in \mathrm{I}$:

\begin{figure}[H]
    \centering
    \begin{subfigure}[b]{0.26\textwidth}
       {\tiny  \begin{tikzcd}[column sep=normal]
                                                                                                &                                                                                                           \\
      &                                                                                                           &                                          &                                                                                                           \\
      &   &                                                                                                           & \\
      &                                                                                                   &   \scalebox{0.7}{$ \subsm{L}{2}$}\arrow[thick,red]{ld}{} \arrow[red,thick]{rd}{}  
                                                                                               &                                                                                                        \\
      & \scalebox{0.7}{${\mathbf{G}}(i)$}     \arrow[]{rd}{}                                     &                                                                                                           & \scalebox{0.7}{${\mathbf{G}}(j)$} \arrow[]{ld}{}                                       \\
      &                                                                                                           & \scalebox{0.7}{${\mathbf{G}}(i,j)$}   &                                                                                                          
\end{tikzcd}}
        \caption{$\mathrm{I}=\{i,j\}$}
    \end{subfigure}
    \begin{subfigure}[b]{0.3\textwidth}
    
 {\tiny\begin{tikzcd}[column sep=normal]
      & \\ & & \scalebox{0.7}{${\mathbf{G}}(i,j)$} & \\
      & \scalebox{0.7}{${\mathbf{G}}(j)$} \arrow[]{ru}{} \arrow[]{dd}{} &                                                                                                           &  \scalebox{0.7}{${\mathbf{G}}(i)$} \arrow[]{lu}{} \arrow[]{dd}{} \\&                                                                                                &   \scalebox{0.7}{$L_3$} \arrow[thick,red]{lu}{} \arrow[thick,red]{ru}{} \arrow[thick,red]{dd}{} &  \\& \scalebox{0.7}{${\mathbf{G}}(j,k)$} & & \scalebox{0.7}{${\mathbf{G}}(i,k)$} \\&                                                                                                           & \scalebox{0.7}{${\mathbf{G}}(k)$} \arrow[]{lu}{} \arrow[]{ru}{} &                                                                                                          
\end{tikzcd}}
  \caption{$\mathrm{I}=\{i,j,k\}$}
      
    \end{subfigure}
    ~ 
\end{figure}
\end{example}

\subsection{Universal Gluing}

In this section, we introduce the concept of a \emph{universal gluing functor}.

\begin{definition}
Let $\mathbf{G}$ be a $\mathbb{C}$-gluing functor, and let \( L \) be a glued-up object over \( \mathbf{G} \) through a family of morphisms \( (\pi_i^L)_{i \in \mathrm{I}} \). For each object \( (V, \delta) \in (L \downarrow \mathbb{C}) \), define the functor \(\mathbf{G}_V\) by:
\[
\mathbf{G}_V(a) := \mathbf{G}(a) \coprod_L V, \quad \text{for all } a \in \mathbb{P}_2(\mathrm{I}).
\]
and 
\[
\mathbf{G}_V(f) := \mathbf{G}(f) \coprod \operatorname{id}_V, \quad \text{for all } f \text{ morphism  in } \mathbb{P}_2(\mathrm{I}).
\]

\noindent We say that \(\mathbf{G}\) is a \textsf{universal $\mathbb{C}$-gluing functor} if \(\mathbf{G}_V\) is a $\mathbb{C}$-gluing functor for all \( (V, \delta) \in (L \downarrow \mathbb{C}) \). In this case, we say that \( L \) be a \textsf{ universal glued-up object over \( \mathbf{G} \)} through \( (\pi_i^L)_{i \in \mathrm{I}} \).
\end{definition}


\subsection{Glued up objects as equalizer}
It is well known that pullbacks can be viewed as equalizers. Similarly, glued-up objects can also be described as equalizers, as stated in the following proposition.
\begin{proposition} \label{equalizer}
Let \(\mathbb{C}\) be a category that admits products. Suppose \(\mathbf{G}\) is a \(\mathbb{C}\)-gluing functor of type \(\mathrm{I}\), and let \(L \in \mathbb{C}\) with a family of morphisms \(\subsm{(\pi_i)}{i\! \in\! \mathrm{I}}\), where \(\pi_i: L \to \mathbf{G}(i)\) is a morphism in \(\mathbb{C}\), for any $i \in \rm{I}$. Then, the following statements are equivalent:
\begin{enumerate}
    \item \(L\) is a glued-up object over \(\mathbf{G}\) through \(\subsm{(\pi_i)}{i \!\in\! \mathrm{I}} \).
    \item The following diagram is an equalizer:
    \begin{center}
        {\footnotesize 
        \begin{tikzcd}[column sep=large]
            L \arrow{rr}{\subsm{\left\langle \pi_i\right\rangle}{i\! \in\! \mathrm{I}}} & & \subsm{\prod}{i\! \in\! \mathrm{I}} \mathbf{G}(i) 
            \arrow[shift left]{rr}{\subsm{\left\langle \mathbf{G}(\iuvop{i}{j}{}) \circ p_i\right\rangle}{(i,j)\! \in\! \mathrm{I}^2}} 
            \arrow[shift right,swap]{rr}{\subsm{\left\langle \mathbf{G}(\iuvop{j}{i}{}) \circ p_j\right\rangle}{(i,j)\! \in\! \mathrm{I}^2}} 
            & & \subsm{\prod}{(i,j) \!\in\! \mathrm{I}^2} \mathbf{G}(i,j).
        \end{tikzcd}
        }
    \end{center}
    where $p_i : \subsm{\prod}{i\! \in\! \mathrm{I}} \mathbf{G}(i) \rightarrow  \mathbf{G}(i)$, for all $i \in \rm{I}$. 
\end{enumerate}
\end{proposition}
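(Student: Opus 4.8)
The plan is to show that both conditions are equivalent to the statement that $(L, (\pi_i)_{i \in \mathrm{I}})$ is a limit cone over $\mathbf{G}$, by translating the universal property of $\lim \mathbf{G}$ into the language of products and equalizers. The key observation is that a cone over $\mathbf{G}$ with vertex $T$ is determined by its legs $\psi_i \colon T \to \mathbf{G}(i)$ for $i \in \mathrm{I}$, together with its legs $\psi_{(i,j)} \colon T \to \mathbf{G}(i,j)$ for $(i,j) \in \mathrm{I}^2$; but the latter are forced by the former via $\psi_{(i,j)} = \mathbf{G}(\iuvop{i}{j}{}) \circ \psi_i$, since $\iuvop{i}{j}{}$ is a morphism in $\mathbb{P}_2(\mathrm{I})$. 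Hence the only genuine constraint a family $(\psi_i)_{i \in \mathrm{I}}$ must satisfy to assemble into a cone is the commutativity condition coming from the two inclusions $\{i\} \hookrightarrow \{i,j\}$ and $\{j\} \hookrightarrow \{i,j\}$, namely $\mathbf{G}(\iuvop{i}{j}{}) \circ \psi_i = \mathbf{G}(\iuvop{j}{i}{}) \circ \psi_j$ for all $(i,j) \in \mathrm{I}^2$.

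First I would spell out the bijection between cones over $\mathbf{G}$ with vertex $T$ and families $(\psi_i)_{i \in \mathrm{I}}$ of morphisms $\psi_i \colon T \to \mathbf{G}(i)$ satisfying the above compatibility, carefully checking that every naturality square in $\mathbb{P}_2(\mathrm{I})$ is either of the two types just described or follows from them by composition with identities (here one uses that the only non-identity morphisms in $\mathbb{P}_2(\mathrm{I})$ are the $\iuvop{i}{j}{}$ with $i \neq j$, and that $\iuvop{i}{i}{} = \operatorname{id}$). Next I would invoke the universal property of the product $\prod_{i \in \mathrm{I}} \mathbf{G}(i)$ to see that such families $(\psi_i)_{i \in \mathrm{I}}$ correspond bijectively to single morphisms $\psi = \langle \psi_i \rangle_{i \in \mathrm{I}} \colon T \to \prod_{i \in \mathrm{I}} \mathbf{G}(i)$ with $p_i \circ \psi = \psi_i$, and likewise use the universal property of $\prod_{(i,j) \in \mathrm{I}^2} \mathbf{G}(i,j)$ to rewrite the compatibility condition: the two composites $\langle \mathbf{G}(\iuvop{i}{j}{}) \circ p_i \rangle_{(i,j)} \circ \psi$ and $\langle \mathbf{G}(\iuvop{j}{i}{}) \circ p_j \rangle_{(i,j)} \circ \psi$ agree precisely when $\mathbf{G}(\iuvop{i}{j}{}) \circ \psi_i = \mathbf{G}(\iuvop{j}{i}{}) \circ \psi_j$ for every $(i,j)$.

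Putting these together, a cone over $\mathbf{G}$ with vertex $T$ is the same datum as a morphism $\psi \colon T \to \prod_{i \in \mathrm{I}} \mathbf{G}(i)$ equalizing the two parallel arrows in the displayed diagram. Under this identification, $\lim \mathbf{G}$ — the universal such cone — corresponds exactly to the equalizer of those two arrows, and the comparison map is $\langle \pi_i \rangle_{i \in \mathrm{I}}$. Thus (1) holds iff $(L, \langle \pi_i \rangle_{i \in \mathrm{I}})$ is this equalizer, which is (2); and conversely (2) forces $L$ with the induced family $(\pi_i)$ to be the limit cone. I would present the argument as two implications, $(1) \Rightarrow (2)$ unwinding the limit into the equalizer, and $(2) \Rightarrow (1)$ reassembling the equalizer into a limit cone, each time checking that the comparison isomorphism of cones restricts to the given family $(\pi_i)_{i \in \mathrm{I}}$ on the singleton objects.

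I expect the main obstacle to be purely bookkeeping: verifying meticulously that \emph{every} morphism of $\mathbb{P}_2(\mathrm{I})$, and hence every naturality condition for a cone, is accounted for by the two inclusions $\{i\} \subseteq \{i,j\}$ — in particular handling the degenerate cases $i = j$ (where $\iuvop{i}{i}{}$ is the identity and the condition becomes vacuous) and making sure the indexing of the product over $\mathrm{I}^2$ rather than over unordered pairs does not introduce spurious or redundant constraints. No deep categorical input is needed beyond the universal properties of products and limits; the content is that the indexing category $\mathbb{P}_2(\mathrm{I})$ is simple enough that a cone is cut out by finitely many equations per pair, which is exactly what an equalizer of maps between two products encodes.
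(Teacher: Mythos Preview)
Your proposal is correct and spells out exactly the standard argument that the paper invokes by reference: the paper's entire proof is the one-line remark that this ``follows directly from a generalization of \cite[Proposition 2.12]{Acts},'' and what you have written is precisely the content of that generalization adapted to the indexing category $\mathbb{P}_2(\mathrm{I})$. Your handling of the bookkeeping (the degenerate case $i=j$, the determination of the two-element legs by the singleton legs, and the translation via the universal properties of the two products) is accurate.
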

\begin{proof}
This follows directly from a generalization of \cite[Proposition 2.12]{Acts}.
\end{proof}

\subsection{Examples of gluable categories}
In this section, we present examples of gluable families and explicitly construct their corresponding glued objects in various categories. We deduce the following result from Proposition \ref{equalizer}.
\begin{lemma}
	Let $\mathbb{C}$ be a category admitting products and equalizers, then $\mathbb{C}$ is a gluable category.
\end{lemma}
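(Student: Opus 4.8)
The plan is to deduce this directly from Proposition~\ref{equalizer}, which characterizes glued-up objects as equalizers in any category admitting products. Let $\mathrm{J}$ be an arbitrary set and let $\mathbf{G} : \mathbb{P}_2(\mathrm{J}) \to \mathbb{C}$ be an arbitrary functor; I must show $\mathbf{G}$ is a $\mathbb{C}$-gluing functor, i.e.\ that $\lim \mathbf{G}$ exists in $\mathbb{C}$. First I would form the two parallel morphisms appearing in the statement of Proposition~\ref{equalizer}: since $\mathbb{C}$ has products, both $\prod_{i \in \mathrm{J}} \mathbf{G}(i)$ and $\prod_{(i,j) \in \mathrm{J}^2} \mathbf{G}(i,j)$ exist, and the universal property of the target product yields the two morphisms $\subsm{\left\langle \mathbf{G}(\iuvop{i}{j}{}) \circ p_i\right\rangle}{(i,j)}$ and $\subsm{\left\langle \mathbf{G}(\iuvop{j}{i}{}) \circ p_j\right\rangle}{(i,j)}$.

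Next, because $\mathbb{C}$ admits equalizers, this parallel pair has an equalizer; call it $E$ with equalizing morphism $e : E \to \prod_{i} \mathbf{G}(i)$. Setting $\pi_i := p_i \circ e : E \to \mathbf{G}(i)$ for each $i \in \mathrm{J}$ gives a family of morphisms so that the diagram in part (2) of Proposition~\ref{equalizer} is, by construction, an equalizer diagram. The implication (2)$\Rightarrow$(1) of that proposition then tells us that $E$ is a glued-up object over $\mathbf{G}$ through $(\pi_i)_{i \in \mathrm{J}}$; in particular a limit of $\mathbf{G}$ exists, so $\mathbf{G}$ is a $\mathbb{C}$-gluing functor of type $\mathrm{J}$. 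Since $\mathrm{J}$ and $\mathbf{G}$ were arbitrary, $\mathbb{C}$ is gluable by definition.

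One small subtlety to address is the degenerate entries of the target product: the definition of $\mathbb{P}_2(\mathrm{J})$ only has objects $\{i,j\}$ with $1 \le |S| \le 2$, so when $i = j$ the pair $(i,i) \in \mathrm{J}^2$ should be read via the conventions $\mathbf{G}(i,i) = \mathbf{G}(i)$, $\mathbf{G}(\iuvop{i}{i}{}) = \operatorname{id}$ (consistent with $\mathfrak{i}_{i,i}$ being the identity in Definition~\ref{trunc}); with this reading the indexing set $\mathrm{J}^2$ in the proposition makes sense and no extra work is needed. There is essentially no obstacle here: the entire content was already packaged into Proposition~\ref{equalizer}, and the lemma is just the observation that the two ingredients used there---products and equalizers---are exactly the hypotheses assumed, so the construction always goes through. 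I would therefore keep the proof to a couple of sentences pointing to Proposition~\ref{equalizer}.
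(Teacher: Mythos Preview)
Your proposal is correct and takes essentially the same approach as the paper: the paper simply states that the lemma is deduced from Proposition~\ref{equalizer}, and your argument is exactly the intended deduction (form the two products, take the equalizer of the parallel pair, and invoke (2)$\Rightarrow$(1)). Your remark on the degenerate diagonal entries is a harmless unpacking of the conventions already in Definition~\ref{trunc}.
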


\begin{remark}
	If $\mathbb{C}$ is a Grothendieck category, then $\mathbb{C}^\op$ is a gluable category. In particular, $\mathbb{Ab}^\op,  R\text{-}\mathbb{Mod}^\op$ and $ R\text{-}\mathbb{Alg}\op$ are gluable categories.
	
\end{remark}



\noindent For some concrete categories, we can not only prove that they are gluable categories but also give a standard representative for the glued -up object. The next Definition-Lemma gives a few examples of such categories along with standard representative for their glued-up objects.

\begin{deflem}\label{genera}
	Let $\mathbb{C}\in \big\{ \mathbb{Sets}, \mathbb{Grp},\mathbb{Ab},\mathbb{(o)Top}, R\text{-}\mathbb{Mod}, R\text{-}\mathbb{Alg}\;|\; R \text{ is a ring}\big\}$. Let $\mathbf{G}$ be a functor from $\mathbb{P}_2 (\mathrm{I})$ to $\mathbb{C}^{\op}$. We define the {\sf standard representative of the limit of $\mathbf{G}$} as the pair $(\subsm{Q}{\mathbf{G}}, \dindi{\iota}{Q}{\mathbf{G}}^{\op})$ where         
	\begin{itemize}
		\item $\subsm{Q}{\mathbf{G}}:={\subsm{\coprod\nolimits}{i\!\in\! \mathrm{I}} \Goi{\mbf{G}}{i}}/\widebar{\Rel{\mbf{G}}}$ where $\widebar{\Rel{\mbf{G}}}$ is the congruence relation generated by the relation $(x,i)\Rel{\mbf{G}} (y, j)$ if there exists $u\in \Goij{\mbf{G}}{i}{j}$ such that 
		$$x= \Gnij{\mbf{G}}{\iuv{i}{j}}^{\op} (u) \text{ and } y= \Gnij{\mbf{G}}{\iuv{j}{i}}^{\op}(u),$$ for any $(x,i), (y, j) \in \subsm{\coprod\nolimits}{i\!\in\! \mathrm{I}}\Goi{\mbf{G}}{i}$ where $i,j\in \mathrm{I}$.
		\item $\dindi{\iota}{Q}{\mathbf{G}}^{\op}=\subsm{\big(\dindiv{\iota}{Q}{\mbf{G}}{a}: \Goi{\mbf{G}}{a}\rightarrow {\subsm{Q}{\mathbf{G}}}\big)}{a\!\in\! \mathbb{P}_2 (\mathrm{I})}$, with ${\dindiv{\iota}{Q}{\mbf{G}}{\{i\}}}:= \pi \circ \subsm{\boldsymbol{\varepsilon}}{\Goi{\mbf{G}}{i},\subsm{\coprod\nolimits}{i\!\in\! \mathrm{I}} \Goi{\mbf{G}}{i}}$, and ${\dindiv{\iota}{Q}{\mathbf{G}}{\{i,j\}}} :={\dindiv{\iota}{Q}{\mbf{G}}{\{i\}}} \circ \Gnij{\mbf{G}}{\iuv{i}{j}}^{\op} $ where $\subsm{\boldsymbol{\varepsilon}}{\Goi{\mbf{G}}{i},\subsm{\coprod\nolimits}{j\!\in\! \mathrm{I}} \Goi{\mbf{G}}{j}}$ is the canonical map from $ \Goi{\mbf{G}}{i}$ to $\subsm{\coprod\nolimits}{j\!\in\! \mathrm{I}} \Goi{\mbf{G}}{j}$, and $\pi: \subsm{\coprod\nolimits}{i\!\in\! \mathrm{I}} \Goi{\mbf{G}}{i} \rightarrow {\subsm{Q}{\mathbf{G}}}$ is the quotient map, for all $i,j,k\in \mathrm{I}$.  
	\end{itemize} 
	Moreover, when $\mathbb{C}\in\{ \mathbb{Top}, \mathbb{(o)Top}\}$, we take $\widebar{\Rel{\mbf{G}}}$ to be the equivalence relation generated by $\Rel{\mbf{G}}$, and $\subsm{Q}{\mathbf{G}}$ is a topological space via the final topology with respect to the family $\subsm{\left( {\dindiv{\iota}{Q}{\mbf{G}}{\{i\}}}\!\right)}{i\! \in\! \mathrm{ I}}$. 
	For all $i \in \mathrm{ I}$, ${\dindiv{\iota}{Q}{\mbf{G}}{\{i\}}}\!\!: \Goi{\mbf{G}}{i}\rightarrow {\subsm{Q}{\mathbf{G}}}$ is open continuous when $\mathbb{C}=\mathbb{(o)Top}$. \\
	In all cases, $\subsm{Q}{\mathbf{G}}$ is  a glued-up object over $\mathbf{G}$ through $\dindi{\iota}{Q}{\mathbf{G}}^{\op}$.
\end{deflem}

\begin{proof}
	The fact that $ \subsm{Q}{\mathbf{G}}$ is a glued-up object over $\mbf{G}$ though $\dindi{\iota}{Q}{\mathbf{G}}^{\op}$ follows directly from the definition of $\widebar{\Rel{\mbf{G}}}$. 
	
\noindent We only prove the last assertion of the Theorem. Let $i \in \mathrm{I}$. We already know that $\dindiv{\iota}{Q}{\mbf{G}}{\{i\}}$ is a continuous map by definition of the final topology when $\mathbb{C}=\mathbb{Top}$. When $\mathbb{C}=\mathbb{(o)Top}$, we prove that $\dindiv{\iota}{Q}{\mbf{G}}{\{i\}}$ is also an open map. Consider $V\subsm{\subseteq}{\operatorname{op}} \Goi{\mbf{G}}{i}$. Our goal is to show that ${\dindiv{\iota}{Q}{\mbf{G}}{\{i\}}}\!\!(V)\subsm{\subseteq}{\operatorname{op}} \subsm{Q}{\mathbf{G}}$. In other words, we need to prove that ${\dindiv{\iota}{Q}{\mbf{G}}{\{j\}}^{-1}}\!({\dindiv{\iota}{Q}{\mbf{G}}{\{i\}}}\!\!(V))\subsm{\subseteq}{\operatorname{op}}\Goi{\mbf{G}}{j}$ for all $j\in \mathrm{I}$, considering the definition of the final topology on $\subsm{Q}{\mathbf{G}}$ with respect to $\dindi{\iota}{Q}{\mathbf{G}}$. Let $i,j \in \mathrm{I}$. To establish this, we prove that ${\dindiv{\iota}{Q}{\mbf{G}}{\{j\}}^{-1}}\!({\dindiv{\iota}{Q}{\mbf{G}}{\{i\}}}\!\!(V))= {\Gnij{\mbf{G}}{\iuv{j}{i}}^{\op}}( {\Gnij{\mbf{G}}{\iuv{i}{j}}^{\op}}^{-1} (V))$. We have
	\begin{align*}
		&y\in {\Gnij{\mbf{G}}{\iuv{j}{i}}^{\op}}( {\Gnij{\mbf{G}}{\iuv{i}{j}}^{\op}}^{-1} (V))  \\&\Leftrightarrow y=\text{$\Gnij{\mbf{G}}{\iuv{j}{i}}$}^{\op}( z)\; \text{for some $z \in {\Gnij{\mbf{G}}{\iuv{i}{j}}^{\op}}^{-1} (V)$}  
		\\& \Leftrightarrow  \pi(y,j)=\pi({\Gnij{\mbf{G}}{\iuv{i}{j}}^{\op}} (z),i)  \\& \Leftrightarrow y\in {\dindiv{\iota}{Q}{\mbf{G}}{\{j\}}^{-1}}\!({\dindiv{\iota}{Q}{\mbf{G}}{\{i\}}}\!\!(V)), \; \text{since}\; \pi({\Gnij{\mbf{G}}{\iuv{i}{j}}^{\op}} (z),i)={\dindiv{\iota}{Q}{\mbf{G}}{\{i\}}}\!\!({\Gnij{\mbf{G}}{\iuv{i}{j}}^{\op}} (z)))\; \quad \quad \text{and}\; \pi(y,j)={\dindiv{\iota}{Q}{\mbf{G}}{\{j\}}}\!\!(y).
	\end{align*}
	Since ${\Gnij{\mbf{G}}{\iuv{i}{j}}^{\op}}$ is continuous, ${\Gnij{\mbf{G}}{\iuv{i}{j}}^{\op}}^{-1} (V)$ is open.  Since ${\Gnij{\mbf{G}}{\iuv{j}{i}}^{\op}}$ is a morphism in ${\mathbb{(o)Top}}$, \\$ {\Gnij{\mbf{G}}{\iuv{j}{i}}^{\op}}( {\Gnij{\mbf{G}}{\iuv{i}{j}}^{\op}}^{-1} (V))$ is open. Thus, successfully demonstrating that for al $i\in \mathrm{I}$, $\dindi{\iota}{Q}{\{i\}}$ is a topological embedding when $\mathbf{G}$ is an $\mathbb{(o)Top}^{\op}$-gluing functor.
\end{proof}

\begin{remark}
	Using the notation as in Definition-Lemma \ref{genera}, we observe that $$\subsm{Q}{\mbf{G}}=\subsm{\cup}{i\!\in\! \mathrm{I}} \dindiv{\iota}{Q}{\mbf{G}}{\{i\}}\!(\Goi{\mbf{G}}{i}).$$
\end{remark}

\noindent We obtain the following dual version of Definition-Lemma \ref{genera}, the proof follows easily from the statement.
\begin{deflem}\label{stdrep}
	Let $\mathbb{C}\in \big\{ \mathbb{Sets}, \mathbb{Grp},\mathbb{Ab}, \mathbb{(o)Top}, R\text{-}\mathbb{Mod}, R\text{-}\mathbb{Alg}\;|\; R \text{ is a ring}\big\}$. Let $\mathbf{G}$ be a functor from $\mathbb{P}_2 (\mathrm{I})$ to $\mathbb{C}$. We define the {\sf standard representative of the limit of $\mathbf{G}$} as the pair $(\subsm{L}{\mathbf{G}}, \dindi{\kappa}{L}{\mathbf{G}})$ where   
	\begin{itemize}
		\item
		$
		\subsm{L}{\mathbf{G}} :=\begin{Bmatrix}\subsm{(\subsm{s}{i})}{i\!\in \!\rm{I}} \in \subsm{\prod\nolimits}{i\! \in\! \rm{I} } \Goi{\mbf{G}}{i} \;|\; \Gnij{\mbf{G}}{\iuv{i}{j}} ({\subsm{s}{i}})=\Gnij{\mbf{G}}{\iuv{j}{i}} ( {\subsm{s}{j}} ), \; \text{for all}\; i,j\in\rm{I}\end{Bmatrix}.$
		\item $\dindi{\kappa}{L}{\mathbf{G}}=\subsm{\big(\dindiv{\kappa}{L}{\mbf{G}}{a}: {\subsm{L}{\mathbf{G}}}\rightarrow  \Goi{\mbf{G}}{a}\big)}{a\!\in\! \mathbb{P}_2 (\mathrm{I})}$, with ${\dindiv{\kappa}{L}{\mbf{G}}{\{i\}}}$ is the morphism sending $\subsm{(\subsm{s}{i})}{i\!\in \!\rm{I}}$ to $\subsm{s}{i}$, and ${\dindiv{\kappa}{L}{\mathbf{G}}{\{i,j\}}} := \Gnij{\mbf{G}}{\iuv{i}{j}}\circ {\dindiv{\kappa}{L}{\mbf{G}}{\{i\}}}$ for all $i,j\in \mathrm{I}$.  
	\end{itemize} 
	Moreover, when $\mathbb{C}\in\{ \mathbb{Top}, \mathbb{(o)Top}\}$,  $\subsm{L}{\mathbf{G}}$ is a topological space via the initial topology with respect to the family $\subsm{( {\dindiv{\kappa}{L}{\mbf{G}}{\{i\}}}\!)}{i\! \in\! \mathrm{ I}}$. 
	For all $i \in \mathrm{ I}$, $ {\dindiv{\kappa}{L}{\mbf{G}}{\{i\}}}\!\!:  {\subsm{L}{\mathbf{G}}} \rightarrow\Goi{\mbf{G}}{i}$ is open continuous when $\mathbb{C}=\mathbb{(o)Top}$. \\ 
	In all cases, $\subsm{L}{\mathbf{G}}$ is  a glued-up object over $\mathbf{G}$ through $\dindi{\kappa}{L}{\mathbf{G}}$.	
\end{deflem}

\subsection{\(\textsf{Hom}\) functor induced by a gluing functor}  
To establish our notation, we begin by recalling the definition of the \(\operatorname{\textsf{Hom}}\) functor. While the material in this section is likely familiar to many readers, we include it here for completeness.

\begin{definition}\label{Homfunctor}  
Let \(X \in \mathbb{C}\). We define the functor \(\operatorname{\textsf{Hom}}(X,-)\) (resp. \(\operatorname{\textsf{Hom}}(-,X)\)) from \(\mathbb{C}\) to \(\mathbb{Set}\) (resp. \(\mathbb{Set}^\op\)) as follows:  

\begin{enumerate}  
\item For any object \(Y \in \mathbb{C}\), we have \(
\operatorname{\textsf{Hom}}(X,Y) \quad \text{(resp. } \operatorname{\textsf{Hom}}(Y,X) \text{).}
\)  

\item For any morphism \(f: Y \to Z\) in \(\mathbb{C}\), we have:  
\[
\begin{aligned}
\operatorname{\textsf{Hom}}(X,-)(f):  \operatorname{\textsf{Hom}}(X,Y) &\to \operatorname{\textsf{Hom}}(X,Z), \\
 \quad g& \mapsto f \circ g.
\end{aligned}
\]
\[
\left( \text{resp. }  
\begin{aligned}
\operatorname{\textsf{Hom}}(-,X)(f):  \operatorname{\textsf{Hom}}(Z,X) &\to \operatorname{\textsf{Hom}}(Y,X), \\  \quad g & \mapsto g \circ f.
\end{aligned}
\right)
\]
\end{enumerate}  

\end{definition}  
\noindent We also introduce two natural transformations that will be used throughout the document.  
\begin{definition}
Let \(\phi: X \to Y\) be a morphism in \(\mathbb{C}\). We define the following natural transformations:  

\begin{itemize}  
    \item We denote by \(\phi^\ast\) the natural transformation  
    \[
    \phi^\ast: \textsf{Hom}(Y, -) \to \textsf{Hom}(X, -)
    \]  
    given by  
    \[
    (\phi^\ast)_Z: \textsf{Hom}(Y, Z) \to \textsf{Hom}(X, Z), \quad \psi \mapsto \psi \circ \phi.
    \]  

    \item We denote by \(\phi_\ast\) the natural transformation  
    \[
    \phi_\ast: \textsf{Hom}(-, X) \to \textsf{Hom}(-, Y)
    \]  
    given by  
    \[
    (\phi_\ast)_Z: \textsf{Hom}(Z, X) \to \textsf{Hom}(Z, Y), \quad \psi \mapsto \phi \circ \psi.
    \]  
\end{itemize}  
\end{definition}
\begin{remark}  
For any object \(X \in \mathbb{C}\) and any morphism \(f: Y \to Z\), we have:  
\[
\operatorname{\textsf{Hom}}(X,-)(f) = (f_*)_X, \quad \text{and} \quad \operatorname{\textsf{Hom}}(-,X)(f) = (f^*)_X.
\]  
\end{remark}

\noindent Next, we introduce the \(\textsf{Hom}\) functor categories.  

\begin{definition}  
The \emph{left Hom functor category} \(\mathbb{LHom}_{\mathbb{C}}\) (resp. the \emph{right Hom functor category} \(\mathbb{RHom}_{\mathbb{C}}\)) of a category \(\mathbb{C}\) is the full subcategory of  \({\mathbb{C}^\op}^{\mathbb{Set}}\) (resp. \({\mathbb{C}}^{\mathbb{Set}}\)) whose objects are functors of the form  
\(\textsf{Hom}(-, Z) \:\: \text{(resp. } \textsf{Hom}(Z,-) \text{)}
\) for some \(Z \in \mathbb{C}\).
\end{definition}  

\begin{remark}  
We note that \(\mathbb{LHom}_{\mathbb{C}}^\op = \mathbb{RHom}_{\mathbb{C^\op}}\).
\end{remark}  

\begin{lemma}\label{nattr}  
Given a natural transformation \( \theta: \emph{\textsf{Hom}}(Y, -) \to \emph{\textsf{Hom}}(Z, -)\) 
(resp. \(\theta: \emph{\textsf{Hom}}(-, Y) \to \emph{\textsf{Hom}}(-, Z)\)),  
we have the identity  \(\theta = \theta_Z(\operatorname{id}_Z)^\ast\) (resp. \(\theta = \theta_Z(\operatorname{id}_Z)_\ast\)).  
\end{lemma}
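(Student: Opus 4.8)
This is an instance of the Yoneda lemma, so the plan is to prove it by a single naturality diagram chase, evaluating the appropriate component of $\theta$ at an identity morphism. The only genuine care required is to keep the composition orders straight, using the definitions of $\phi^\ast$, $\phi_\ast$ and the Remark identifying $\textsf{Hom}(X,-)(f)$ with $(f_\ast)_X$ and $\textsf{Hom}(-,X)(f)$ with $(f^\ast)_X$. Concretely, for the covariant case set $\phi := \theta_Y(\operatorname{id}_Y)$; since $\theta_Y \colon \textsf{Hom}(Y,Y) \to \textsf{Hom}(Z,Y)$, this is a morphism $\phi \colon Z \to Y$, and hence $\phi^\ast \colon \textsf{Hom}(Y,-) \to \textsf{Hom}(Z,-)$ is a natural transformation with exactly the same source and target as $\theta$. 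It then suffices to verify $\theta = \phi^\ast$ componentwise.

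\textbf{Key step (covariant case).} Fix an object $W$ and an element $f \in \textsf{Hom}(Y,W)$, i.e. a morphism $f \colon Y \to W$. Apply naturality of $\theta$ to $f$: writing the transition maps as $\textsf{Hom}(Y,-)(f) = (f_\ast)_Y$ and $\textsf{Hom}(Z,-)(f) = (f_\ast)_Z$, naturality says the square with top edge $\theta_Y$, bottom edge $\theta_W$ and vertical edges these two copies of $f_\ast$ commutes. Chasing $\operatorname{id}_Y \in \textsf{Hom}(Y,Y)$ around it: going down then across gives $\theta_W(f \circ \operatorname{id}_Y) = \theta_W(f)$, while going across then down gives $f \circ \theta_Y(\operatorname{id}_Y) = f \circ \phi$, which by the definition of $\phi^\ast$ is exactly $(\phi^\ast)_W(f)$. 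Hence $\theta_W(f) = (\phi^\ast)_W(f)$; since $W$ and $f$ were arbitrary, $\theta = \phi^\ast = \theta_Y(\operatorname{id}_Y)^\ast$. (This is the content of the stated identity once one reads $\theta_Y(\operatorname{id}_Y)$ as the element of the representing object associated to the \emph{source} functor $\textsf{Hom}(Y,-)$.)

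\textbf{Dual case.} For the contravariant statement $\theta \colon \textsf{Hom}(-,Y) \to \textsf{Hom}(-,Z)$, put $\psi := \theta_Y(\operatorname{id}_Y) \colon Y \to Z$, so that $\psi_\ast \colon \textsf{Hom}(-,Y) \to \textsf{Hom}(-,Z)$. Given an object $W$ and $g \in \textsf{Hom}(W,Y)$, apply naturality of $\theta$ to $g$, now using $\textsf{Hom}(-,Y)(g) = (g^\ast)_Y$ and $\textsf{Hom}(-,Z)(g) = (g^\ast)_Z$; chasing $\operatorname{id}_Y$ through the commuting square yields $\theta_W(g) = \theta_Y(\operatorname{id}_Y) \circ g = (\psi_\ast)_W(g)$, whence $\theta = \theta_Y(\operatorname{id}_Y)_\ast$.

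\textbf{Main obstacle.} There is no substantive difficulty here — the statement is a packaging of the Yoneda lemma and the proof is pure bookkeeping. The only point demanding attention is bookkeeping about conventions: which argument of $\textsf{Hom}$ is fixed, the direction of the morphism $\theta_Y(\operatorname{id}_Y)$ read off from the component $\theta_Y$, and the precise composition orders built into $(\cdot)^\ast$, $(\cdot)_\ast$, so that $\phi^\ast$ (resp.\ $\psi_\ast$) indeed has source $\textsf{Hom}(Y,-)$ (resp.\ $\textsf{Hom}(-,Y)$) and target $\textsf{Hom}(Z,-)$ (resp.\ $\textsf{Hom}(-,Z)$), and so that the diagram chase lands on the right side of the claimed equality.
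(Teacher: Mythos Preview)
Your proof is correct and follows the same Yoneda-style naturality chase as the paper: evaluate a naturality square for $\theta$ at the identity of the representing object and read off the equality componentwise. You also handled the bookkeeping more carefully than the paper does. In particular, you rightly evaluate at $\operatorname{id}_Y$, the identity of the object representing the \emph{source} functor $\textsf{Hom}(Y,-)$, so that $\theta_Y(\operatorname{id}_Y)\in\textsf{Hom}(Z,Y)$ and $\theta_Y(\operatorname{id}_Y)^\ast$ genuinely has type $\textsf{Hom}(Y,-)\to\textsf{Hom}(Z,-)$. The paper's statement writes $\theta_Z(\operatorname{id}_Z)^\ast$, but $\theta_Z\colon\textsf{Hom}(Y,Z)\to\textsf{Hom}(Z,Z)$ cannot be applied to $\operatorname{id}_Z$; its own proof diagram in fact has the roles of $Y$ and $Z$ swapped relative to the stated direction of $\theta$. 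Your parenthetical remark identifying the representing object with the source functor is exactly the correction needed, and your dual case is handled symmetrically and correctly.
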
  

\begin{proof}  
Let \(\theta\) be a natural transformation from \(\textsf{Hom}(Y, -)\) to \(\textsf{Hom}(Z, -)\), and let \(f: Z \to X\) be a morphism in \(\mathbb{C}\). Consider the following commutative diagram:  
\[
\xymatrix{  
\textsf{Hom}(Z,Z) \ar[d]_{\theta_{Z}} \ar[r]^{(f_\ast)_Z} &  \textsf{Hom}(Z,X) \ar[d]^{\theta_X} \\  
\textsf{Hom}(Y,Z) \ar[r]_{(f_\ast)_Y}  &  \textsf{Hom}(Y,X)  
}
\]  
Evaluating this diagram at \(\operatorname{id}_Z\), we obtain  
\[
\theta_X(f) = f \circ \theta_Z(\operatorname{id}_Z) = \theta_Z(\operatorname{id}_Z)^\ast_X (f).
\]  
Thus, we conclude that \(\theta = \theta_Z(\operatorname{id}_Z)^\ast\). A similar argument proves the dual case.  
\end{proof}  

Any \(\mathbb{C}\)-gluing functor naturally induces a \(\textsf{Hom}\) functor, as described below.

\begin{definition}\label{homsets}  
Let \(\mathbf{G} \in \mathbb{P}_2(\mathrm{I})^{\mathbb{C}^\op}\). Then, \(\mathbf{G}\) induces a functor, denoted \(\textsf{Hom}(\mathbf{G})\), in \(  \mathbb{P}_2(\mathrm{I})^{\mathbb{RHom}_{\mathbb{C}}}\), which is defined as follows:  
\begin{enumerate}  
\item For every \(a \in \mathbb{P}_2(\mathrm{I})\), \(\textsf{Hom}(\mathbf{G})(a) = \textsf{Hom}(\mathbf{G}(a),-) \).  
\item For all \(i,j \in \mathrm{I}\), \(\textsf{Hom}(\mathbf{G})(\iuv{i}{j}) = {{ \mathbf{G}(\iuvop{i}{j}{})^\op}^\ast }\).
\end{enumerate}  
\end{definition}  

\noindent The following result characterizes glued objects in terms of the \(\textsf{Hom}\) functor.
\begin{proposition}\label{HomG}  
Let \(\mathbf{G} \in \mathbb{P}_2(\mathrm{I})^{\mathbb{C}^\op}\).  
Suppose there exists an object \(L \in \mathbb{C}\) and a family of morphisms  
\(\iota_a: \mathbf{G}(a) \to L\) in \(\mathbb{C}\) for all \(a \in \mathbb{P}_2(\mathrm{I})\).  
The following conditions are equivalent:  
\begin{enumerate}  
    \item \(\mathbf{G}\) is a \(\mathbb{C}^\op\)-gluing functor with limit  \(  \left(L, \subsm{\left(\iota_a^\op \right)}{a\!\in\! \mathbb{P}_2(\mathrm{I})}\right).\) 
    \item The functor \(\emph{\textsf{Hom}}(\mathbf{G})\) is a \(\mathbb{RHom}_{\mathbb{C}}\)-gluing functor whose limit is \( \left(\emph{\textsf{Hom}}(L,-), \subsm{\left(\iota_a^\ast \right)}{a\!\in\! \mathbb{P}_2(\mathrm{I})}\right). \)
\end{enumerate}  
In particular, when \(\mathbf{G}\) is a \(\mathbb{C}\)-gluing functor, we have the canonical isomorphism:  
\[
\lim \emph{\textsf{Hom}}(\mathbf{G}) \simeq \emph{\textsf{Hom}}(\lim \mathbf{G}).
\]  
\end{proposition}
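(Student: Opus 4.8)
The plan is to prove the equivalence $(1) \iff (2)$ directly from the universal property of limits, using the Yoneda-type machinery already set up: Lemma \ref{nattr} (which says a natural transformation out of a representable $\textsf{Hom}(Y,-)$ is determined by where it sends $\operatorname{id}$), the description of $\textsf{Hom}(\mathbf{G})$ in Definition \ref{homsets}, and Proposition \ref{equalizer} (glued objects as equalizers) if a more computational route is needed. The key observation is that a limit in a presheaf category $\mathbb{C}^{\mathbb{Set}}$ is computed pointwise, and that the full subcategory $\mathbb{RHom}_{\mathbb{C}}$ of representables is closed under the relevant limit precisely when the corresponding colimit exists in $\mathbb{C}$; this is the content we must extract carefully since $\mathbb{RHom}_{\mathbb{C}}$ is only a \emph{full} subcategory, not closed under all limits a priori.

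First I would unwind what a cone over $\textsf{Hom}(\mathbf{G})$ with vertex $\textsf{Hom}(W,-)$ is: by Lemma \ref{nattr}, each leg $\theta_a : \textsf{Hom}(W,-) \to \textsf{Hom}(\mathbf{G}(a),-)$ corresponds uniquely to a morphism $f_a : \mathbf{G}(a) \to W$ in $\mathbb{C}$ (namely $f_a = (\theta_a)_W(\operatorname{id}_W)$), and the cone compatibility condition $\theta_{(i,j)} = \mathbf{G}(\iuvop{i}{j}{})^{\op\ast} \circ \theta_i$ translates, again via Lemma \ref{nattr}, exactly into $f_{(i,j)} = f_i \circ \mathbf{G}(\iuvop{i}{j}{})^\op$ — that is, into the statement that $(W, (f_a)_a)$ is a cone over $\mathbf{G}$ viewed in $\mathbb{C}^\op$ (equivalently, a cocone under $\mathbf{G}$ in $\mathbb{C}$). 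Thus the assignment $\theta_\bullet \mapsto f_\bullet$ is a natural bijection between cones over $\textsf{Hom}(\mathbf{G})$ with representable vertex and cones over $\mathbf{G}$ in $\mathbb{C}^\op$. Second, I would check this bijection is compatible with the (unique) mediating morphisms: a morphism $\textsf{Hom}(W',-) \to \textsf{Hom}(W,-)$ of cones corresponds (Yoneda) to a morphism $W \to W'$ in $\mathbb{C}$, i.e. to a morphism in $\mathbb{C}^\op$, respecting the cone structure. Therefore $\big(\textsf{Hom}(L,-), (\iota_a^\ast)_a\big)$ is a limit cone for $\textsf{Hom}(\mathbf{G})$ \emph{among representable-vertex cones} if and only if $\big(L, (\iota_a^\op)_a\big)$ is a limit cone for $\mathbf{G}$ in $\mathbb{C}^\op$; and since $\mathbb{RHom}_{\mathbb{C}}$ is by definition the full subcategory of representables, "limit among representable-vertex cones" is exactly "limit in $\mathbb{RHom}_{\mathbb{C}}$". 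This gives $(1) \iff (2)$.

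The ``in particular'' clause then follows: if $\mathbf{G}$ is a $\mathbb{C}$-gluing functor (so $\lim \mathbf{G}$ exists in $\mathbb{C}$), apply the equivalence with the roles suitably arranged — more precisely, one checks the construction is self-dual enough that $\textsf{Hom}(-, \lim\mathbf{G})$ as a limit in $\mathbb{LHom}_{\mathbb{C}}$, or dually $\textsf{Hom}(\lim \mathbf{G}, -)$, represents $\lim \textsf{Hom}(\mathbf{G})$, yielding the canonical isomorphism $\lim \textsf{Hom}(\mathbf{G}) \simeq \textsf{Hom}(\lim \mathbf{G})$. I would phrase this last step as: the limiting cone legs $\iota_a^\ast$ exhibit $\textsf{Hom}(\lim\mathbf{G},-)$ as $\lim \textsf{Hom}(\mathbf{G})$, and any two limits are canonically isomorphic.

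The main obstacle I anticipate is bookkeeping the variance: $\mathbf{G}$ is a functor into $\mathbb{C}^\op$, $\textsf{Hom}(\mathbf{G})$ lands in $\mathbb{RHom}_{\mathbb{C}} \subseteq \mathbb{C}^{\mathbb{Set}}$, and the ${}^\ast$ vs ${}_\ast$ and ${}^\op$ decorations must be threaded consistently so that "cone over $\mathbf{G}$ in $\mathbb{C}^\op$" matches "cocone under $\mathbf{G}$ in $\mathbb{C}$" matches "cone over $\textsf{Hom}(\mathbf{G})$". The genuinely mathematical (as opposed to notational) point that needs care is that limits in $\mathbb{RHom}_{\mathbb{C}}$ must be interpreted as limits in the full subcategory — one must not accidentally invoke pointwise limits in $\mathbb{C}^{\mathbb{Set}}$, which need not be representable; Lemma \ref{nattr} is precisely the tool that lets us stay inside the representables by reducing everything to morphisms in $\mathbb{C}$.
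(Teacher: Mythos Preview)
Your proposal is correct and follows essentially the same approach as the paper: both arguments reduce to the observation that cones over $\mathbf{G}$ in $\mathbb{C}^\op$ correspond bijectively to cones over $\textsf{Hom}(\mathbf{G})$ with representable vertex, and then invoke Lemma~\ref{nattr} to show that morphisms of such cones correspond as well, so terminal cones match. The paper's proof is a two-sentence sketch of exactly this; your version simply unpacks the details (the explicit Yoneda dictionary $\theta_a \mapsto f_a$, the role of fullness of $\mathbb{RHom}_{\mathbb{C}}$, and the variance bookkeeping) that the paper leaves implicit.
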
  

\begin{proof}  
Let \(L' \in \mathbb{C}\). Clearly, \(L'\) is a cone over \(\mathbf{G}\) if and only if  
\(\textsf{Hom}(L',-)\) is a cone over \(\textsf{Hom}(\mathbf{G})\).  

\noindent Applying Lemma~\ref{nattr}, we deduce that \(L\) is a terminal cone over \(\mathbf{G}\)  
if and only if \(\textsf{Hom}(L,-)\) is a terminal cone over \(\textsf{Hom}(\mathbf{G})\).  
\end{proof}

\section{Operations between gluing functors}
\subsection{Refinements}
Inspired by the concept of refinement in coverings, as discussed in \cite[\S 1]{Conrad}, we introduce a corresponding notion of refinement between gluing functors of different types. This concept is induced by a mapping between their index sets. Drawing from the theory of ringed topological spaces, we explore the idea of refinement morphisms between two such gluing functors.

\begin{definition}\label{refino}
Let $\mathrm{I}$ and $\mathrm{J}$ be two index sets, and let $\gamma : \mathrm{I} \rightarrow \mathrm{J}$ be a map. We define $\mathbf{P_2}(\gamma): \mathbb{P}_2(\mathrm{I}) \rightarrow \mathbb{P}_2(\mathrm{J})$ as the functor such that, for all $i, j\in \mathrm{I}$, we have:
\begin{itemize}
    \item $\mathbf{P_2}(\gamma)(\{i\}) = \{\gamma(i)\}$;
    \item $\mathbf{P_2}(\gamma)(\{i, j\}) = \{\gamma(i), \gamma(j)\}$;
    \item $\mathbf{P_2}(\gamma)(\iuv{i}{j}) = \iuv{\gamma(i)}{\gamma(j)}$;
\end{itemize}
\end{definition}

\begin{remark}
\begin{enumerate}
\item When $\gamma$ is a bijection, then $\mathbf{P_2}(\gamma)$ and $\mathbf{P_2}(\gamma^{-1})$ are inverses of each other;
\item When $\gamma$ is injective (resp. surjective), then $\gamma$ has left (resp. right) inverse $\beta$ and  $\mathbf{P_2}(\beta)$ is a left inverse of  $\mathbf{P_2}(\gamma)$.
\end{enumerate}
\end{remark}

\begin{definition}
Let $\mathbb{C}$ be a category, and let $\mathrm{I}$ and $\mathrm{J}$ be sets. We say that a $\mathbb{C}$-gluing functor $\mathbf{G}$ of type $\mathrm{J}$ {\sf refines} a $\mathbb{C}$-gluing functor $\mathbf{F}$ of type $\mathrm{I}$ if there exists a map $\gamma: \mathrm{I} \rightarrow \mathrm{J}$ and a natural transformation from $\mathbf{G}\circ \mathbf{P_2}(\gamma)$ to $\mathbf{F}$. In this case, we also say that $\mathbf{G}$ is {\sf a refinement of $\mathbf{F}$}, and the natural transformation from $\mathbf{G}\circ \mathbf{P_2}(\gamma)$ to $\mathbf{F}$ is called {\sf a refinement morphism from $\mathbf{G}$ to $\mathbf{F}$}. 
\end{definition}

\noindent In the next lemma, we observe that the refinement of gluing functions induces a unique canonical morphism between their respective glued-up objects when they exist.
\begin{deflem}  \label{refine} 
Let $\mathbb{C}$ be a category, $\mathbf{G}$ be a $\mathbb{C}$-gluing functor of type $\mathrm{J}$, $\mathbf{F}$ be a $\mathbb{C}$-gluing functor of type $\mathrm{I}$, and $\rho$ be a refinement from $\mathbf{G}$ to $\mathbf{F}$. 
We define $\operatorname{lim} \rho$ to be the unique cone morphism from $ \limi\;{\mathbf{G}}$ to $ \limi\;{\mathbf{F}}$ induced by $\rho$.
\end{deflem}

\begin{proof}
We write 
\begin{itemize}
\item $ \limi\;{\mathbf{G}}$ as $(\subsm{L}{\mathbf{G}}, \subsm{\pi}{\mathbf{G}})$;
\item $ \limi\;{\mathbf{F}}$ as $(\subsm{L}{\mathbf{F}}, \subsm{\pi}{\mathbf{F}})$.
\end{itemize}
We denote $\gamma : \mathrm{I} \rightarrow \mathrm{J}$ to be the map inducing the refinement morphism and $\rho$ to be the refinement morphism from $\mathbf{G}$ to $\mathbf{F}$.
Since $\limi{\ \mathbf{F}}$ is a terminal cone over $\mathbf{F}$, we obtain the existence of the unique map $\mu: \subsm{L}{\mathbf{G}}\rightarrow \subsm{L}{\mathbf{F}}$ such that the following commutative diagram commutes for all distinct indices $i,j\in \mathrm{I}$:

\begin{figure}[H]
	\begin{center}
		\adjustbox{scale=0.8,center}{
			\begin{tikzcd}
				 \scalebox{0.7}{${\mathbf{F}}(i,j) $}                                                                                                                                                                        &  &   \scalebox{0.7}{${\mathbf{F}}(j)$} \arrow[swap, labels=description]{ll} { \scalebox{0.7}{${\mathbf{F}}( \iuv{j}{i})$}} &                                                                                                   \\
			  &  \scalebox{0.7}{${\mathbf{F}}(i)$} \arrow[swap, labels=description]{lu} {\scalebox{0.7}{${\mathbf{F}}( \iuv{i}{j})$}} &             &  \scalebox{0.7}{$\subsm{L}{\mathbf{F}}$}  \arrow[swap, labels=description]{lu}{\scalebox{0.7}{$\dindsi{\pi}{\mathbf{F}}{\{j\}}$}}  \arrow[near end, labels=description]{ll}{\scalebox{0.7}{$\dindsi{\pi}{\mathbf{F}}{\{i\}}$}}\\
				 \scalebox{0.7}{${\mathbf{G}}(\gamma(i), \gamma(j))$}  \arrow[labels=description]{uu}{\scalebox{0.7}{$ \subsm{\rho}{\{i,j\}}$}}                                                                                                                                                                                                                                                  &  &   \scalebox{0.7}{$ {\mathbf{G}}(\gamma(j))$} \arrow[swap, labels=description, near end]{ll} {\scalebox{0.7}{${\mathbf{G}}(\iuv{\gamma (j)}{\gamma (i)})$}} \arrow[near end, labels=description]  {uu}{\scalebox{0.7}{$\subsm{\rho}{\{j\}}$}}   &    \\	  &                                                                                                                                                                         \scalebox{0.7}{${\mathbf{G}}(\gamma(i))$} \arrow[labels=description]{lu}{\scalebox{0.7}{${\mathbf{G}}(\iuv{\gamma (i)}{\gamma (j)})$}}      \arrow[near start, labels=description]{uu}{\scalebox{0.7}{$\subsm{\rho}{\{i\}}$}}                                                                                                                                                         &  & \scalebox{0.7}{$ \subsm{L}{\mathbf{G}}$}\arrow[labels=description]{ll} {\scalebox{0.7}{$\dindsi{\pi}{\mathbf{G}}{\{i\}}$}}  \arrow[swap, labels=description]{lu}{\scalebox{0.7}{$\dindsi{\pi}{\mathbf{G}}{\{j\}}$}}   \arrow[dotted, labels=description]{uu}{\scalebox{0.7}{$\exists ! \mu$}}                                                                           
			\end{tikzcd}}
		\end{center}
\end{figure}	
\end{proof}

\begin{remark}

Let $\mathbf{F}$ be a $\mathbb{C}$-gluing functor. We observe that the operation $\mathbf{F}\circ \mathbf{P_2}(\gamma)$ can be interpreted as the restriction of $\mathbf{F}$ to $\mathrm{I}$ via the map $\gamma$. Specifically, when $\gamma$ is an inclusion map from $\mathrm{I}$ to $\mathrm{J}$, it effectively acts as a restriction map. More precisely, an inclusion naturally defines a refinement. We can establish a natural transformation:
$$\alpha: \mathbf{F}\circ \mathbf{P_2}(\gamma) \to \mathbf{F},$$
where $\alpha_a = \operatorname{id}_{\mathbf{F}(a)}$ for every $a \in  \mathbb{P}_2(\mathrm{I})$. According to Lemma \ref{refine}, this induces a canonical morphism: 
$$\mu: \lim\mathbf{F}\circ \mathbf{P_2}(\gamma) \to \lim \mathbf{F}.$$
In Example \ref{pushut}, this phenomenon is illustrated through figures, showing the inclusion sequence $\{i\} \subseteq \{i,j\} \subseteq \{i,j,k\} \subseteq \{i,j,k,l\}$.
 \end{remark}

We can define the category of gluing functors, where we define morphisms to be refinement morphisms.

\begin{definition}\label{gdff}
Let $\mathbb{C}$ be a category. We define the category of $\mathbb{C}$-{\sf gluing functors}, denoted as $\mathbf{Gf}(\mathbb{C})$, to be the category whose objects are $\mathbb{C}$-gluing functors, and morphisms are refinement morphisms.
\end{definition}

\subsection{Formally composing gluing}

Building on the concepts defined in previous sections, we now introduce the notion of composing gluing functors within their respective category. This structured approach allows for the systematic combination of multiple gluing functors.

\begin{definition}\label{refinery}
Let $\mathrm{I}$ be a set and $\subsm{( {\subsm{\mathbf{G}}{i}})}{i\! \in\! \mathrm{I}}$ be a family gluing functors. We say the family $\subsm{( {\subsm{\mathbf{G}}{i}})}{i\! \in\! \mathrm{I}}$ is {\sf composable}, if there exists a $\mathbf{Gf}(\mathbb{C})$-gluing functor $\mathbf{G}$ of type $\mathrm{I}$ such that ${\mathbf{G}}(\{ i\})={\subsm{\mathbf{G}}{i}}$, for all $i\in \mathrm{I}$. We call ${\lim} \mathbf{G}$ a composite of $\subsm{({\subsm{\mathbf{G}}{i}})}{i\! \in\! \mathrm{I}}$ and if $(L, \pi)\simeq {\lim} {\lim} \mathbf{G}$, we say that $L$ is a {\sf composite glued up object} over $\mathbf{G}$. \end{definition}

\noindent We illustrate the composition of gluing functors with a simple example.
\begin{example} \label{torusglu}
We describe how to glue a cylinder into a torus, as shown in Figure~\ref{fig7}, through the composition of two gluing functors.

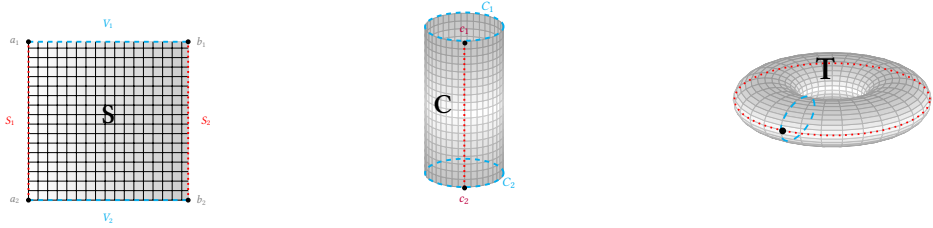
\begin{figure}[H]\begin{center}
	
	\scalebox{0.6}{\begin{tabular}{ccc}
			
			\begin{tikzpicture}[scale=0.7]
				\tikzset{roundnode/.style = {thick, draw = black, fill = black, outer sep = 1, circle, minimum size = 1pt, scale = 0.2}}
				
				\fill[black, left color=white, right color=gray!40] (0,0) rectangle (5,5);
				\draw[step=0.3cm,black,very thin] (0,0) grid (5,5);
				\draw[step=0.3cm,red, dotted, very thick] (0,0) -- (0,5); 
				\draw[step=0.3cm,red, dotted, very thick] (5,0) -- (5,5);
				\draw[step=0.3cm,cyan, dashed, very thick] (0,0) -- (5,0);
				\draw[step=0.3cm,cyan, dashed, very thick] (0,5) -- (5,5);
				\node[roundnode = gray, label = left: {\scalebox{0.6}{$ \color{gray} a_2$}}] (s) at (0,0){};
				\node[roundnode = gray, label = left: {\scalebox{0.6}{$\color{gray} a_1$}}] (s) at (0,5){};
				\node[roundnode = gray, label = right: {\scalebox{0.6}{$ \color{gray} b_1$}}] (s) at (5,5){};
				\node[roundnode = gray, label = right: {\scalebox{0.6}{$ \color{gray} b_2$}}] (s) at (5,0){};
				\node[label = left: {{\scalebox{0.6}{$\color{red} S_1$}}}] (s) at (0,2.5){};
				\node[label = below: {\scalebox{0.6}{$ \color{cyan} V_2$}}] (s) at (2.5,0){};
				\node[label = above: {\scalebox{0.6}{$\color{cyan} V_1$}}] (s) at (2.5,5){};
				\node[label = right: {\scalebox{0.6}{$ \color{red} S_2$}}] (s) at (5,2.5){};
				\node[label = {{\Large $\mathbf{S}$}}] (s) at (2.5,2){};
			\end{tikzpicture}
			
			&
			\begin{tikzpicture}[font=\large]
				\tikzset{
					node distance=0cm,
					buffer/.style={
						shape border rotate=270,
						regular polygon,
						regular polygon sides=3,
						minimum height=2cm,
						shade,shading=axis,left color=gray!40,right color=gray!80,
					}
				}
				
				\scalebox{0.65}{  \node (therectangle) at (0,0) [shade,shading=axis,left color=white,right color=gray!40,minimum width=2.5cm,minimum height=1cm,outer sep=0pt] {\small \color{red}{Glue $S_1$ with $S_2$}};
					
					\node [buffer,outer sep=0pt,right=of therectangle] {};
					
					\node (therectangle) at (0,-1.8) [fill=white,minimum width=.25cm,minimum height=1cm,outer sep=0pt] {};}
			\end{tikzpicture}%
			\begin{tikzpicture}[scale=1]
				\tikzset{roundnode/.style = {thick, draw = black, fill = black, outer sep = 1, circle, minimum size = 1pt, scale = 0.2}}
				\begin{axis}[axis lines=none,
					domain=0:2*pi, y domain=0:40,
					xmin=-3, xmax=3, ymin=-3, ymax=3, zmin
					=0.0, zmax=40,
					samples=40,samples y=20,
					z buffer=sort]
					\addplot3[surf] ({cos(deg(x))},{sin(deg(x))},{y});
					\addplot3[red, dotted,very thick, domain=-3.9:36.1,samples y=0] (0,0,x);
					\begin{scope}[yshift=101]
						\draw[cyan, very thick,dashed] (0,0) ellipse [x radius=.86cm,
						y radius=.3125cm];
					\end{scope}
					
					\begin{scope}[yshift=9]
						\draw[cyan,very thick, dashed] (0,0) ellipse [x radius=.86cm,
						y radius=.3125cm];
						\node (therectangle) at (2,-1.9) [fill=white,minimum width=.5cm,minimum height=1cm,outer sep=0pt] {};
						\node[roundnode = gray, label =below: {\color{purple}{{\tiny$c_2$}}}] (s) at (0.46,-0.97){};
						\node[roundnode = gray, label =above: {\color{purple}{{\tiny$c_1$}}}] (s) at (-3.9,8.4){};
						\node[label =below: {\color{cyan}{{\tiny$C_2$}}}] (s) at (0.8,1){};
						\node[label =above: {\color{cyan}{{\tiny$C_1$}}}] (s) at (-3.9,9.8){};
						\node[label =above: {{{\Large$\mathbf{C}$}}}] (s) at (-1.9,2.8){};
					\end{scope}

				\end{axis}
			\end{tikzpicture}
			
			&
			\begin{tikzpicture}[font=\large]
				\tikzset{
					node distance=0cm,
					buffer/.style={
						shape border rotate=270,
						regular polygon,
						regular polygon sides=3,
						minimum height=2cm,
						shade,shading=axis,left color=gray!40,right color=gray!80,
					}
				}
				
				\scalebox{0.65}{  \node (therectangle) at (0,0) [shade,shading=axis,left color=white,right color=gray!40,minimum width=2.5cm,minimum height=1cm,outer sep=0pt] {\small \color{cyan}{Glue $C_1$ with $C_2$}};
					
					\node [buffer,outer sep=0pt,right=of therectangle] {};
					
					\node (therectangle) at (0,-1.8) [fill=white,minimum width=.25cm,minimum height=1cm,outer sep=0pt] {};}
			\end{tikzpicture}%

			\begin{tikzpicture}[scale=1.4]
				\tikzset{roundnode/.style = {thick, draw = black, fill = black, outer sep = 1, circle, minimum size = 1pt, scale = 0.2}}
				\begin{axis}[axis lines=none,
					xmin=-5, xmax=5, ymin=-5, ymax=5, zmin=-5, zmax=5,
					]
					\addplot3[surf,
					samples=30,
					domain=0:2*pi,y domain=0:2*pi,
					z buffer=sort]
					({(2+cos(deg(x)))*cos(deg(y+pi/2))},
					{(2+cos(deg(x)))*sin(deg(y+pi/2))},
					{sin(deg(x))});
					
					\begin{scope}[yshift=46, xshift=0]
						\draw[red,thick, dotted] (0,0) ellipse [x radius=1.53cm,
						y radius=.57cm];
					\end{scope}
					
					\begin{scope}[canvas is yz plane at x=-1, yshift=-28.2, xshift=-9.8]
						\draw[cyan,thick, dashed] (0,0) ellipse [x radius=1.22cm,
						y radius=.885cm];
						\node (therectangle) at (2,-4.9) [fill=white,minimum width=.5cm,minimum height=1cm,outer sep=0pt] {};
						\node[roundnode = gray] (s) at (-1.07,-0.04){};
						\node[ label =above : {\large {{$\mathbf{T}$}}}] (s) at (2,0){};
					\end{scope}
				\end{axis}
			\end{tikzpicture}

	\end{tabular} }
\end{center}\caption{Gluing a square into a torus}\label{fig7}           \end{figure}

\noindent We use the notation of Figure~\ref{fig7}, along with the following:
\begin{itemize}
  \item $\phi : S_1 \to S_2$ is an isomorphism;
  \item $\psi : V_1 \to V_2$ is an isomorphism;
  \item $\lim \psi : C_1 \to C_2$ is the map induced by $\psi$ after gluing;
  \item $f_{a,b} : \{ a \} \rightarrow \{ b \}$ sends $a$ to $b$;
  \item ${A^\amalg}^2$ denotes the coproduct $A \amalg A$.
\end{itemize}

\noindent The cylinder $C$ is obtained by gluing $S$ along $S_1$ and $S_2$, making it the coequalizer of the maps $\mathfrak{i}_{S_1, S}$ and $\mathfrak{i}_{S_2, S} \circ \phi$. This can also be described as the pushout of these maps via a gluing functor $\mathbf{G}$ defined on the category $\mathbb{P}_2(\mathrm{I})$ for $\mathrm{I} = \{1, 2\}$.

\noindent The data of the functor $\mathbf{G}: \mathbb{P}_2(\mathrm{I}) \rightarrow \mathbb{Top}^\op$ is:
\[
\begin{aligned}
&\Goi{\mbf{G}}{1} = S, \quad \Goi{\mbf{G}}{2} = S_1, \quad \Goij{\mbf{G}}{1}{2} = {S_1^\amalg}^2, \\
&\Gnij{\mbf{G}}{\iuv{1}{2}}^{\op} = (\mathfrak{i}_{S_1, S}, \mathfrak{i}_{S_2, S} \circ \phi), \quad \Gnij{\mbf{G}}{\iuv{2}{1}}^{\op} = (\operatorname{id}_{S_1}, \operatorname{id}_{S_1}).
\end{aligned}
\]

\noindent Thus, $C$ is the glued-up object associated with $\mathbf{G}$.

\noindent We now refine this gluing to describe the next step: going from the cylinder to the torus. For each $i \in \{1,2\}$, define the refinement gluing functor $\subsm{\mathbf{G}}{V_i} : \mathbb{P}_2(\mathrm{I}) \rightarrow \mathbb{Top}^\op$ by:

\[
\begin{aligned}
&\Goi{\subsm{\mathbf{G}}{V_i}}{1} = S \amalg V_i, \quad \Goi{\subsm{\mathbf{G}}{V_i}}{2} = S_1 \amalg \{a_i\}, \quad \Goij{\subsm{\mathbf{G}}{V_i}}{1}{2} = {(S_1 \amalg \{a_i\})^\amalg}^2, \\
&\Gnij{\subsm{\mathbf{G}}{V_i}}{\iuv{1}{2}}^{\op} = \left( \mathfrak{i}_{S_1 \amalg \{a_i\}, S \amalg V_i}, \, \mathfrak{i}_{S_2 \amalg \{b_i\}, S \amalg V_i} \circ (\phi \amalg f_{a_i, b_i}) \right), \\
&\Gnij{\subsm{\mathbf{G}}{V_i}}{\iuv{2}{1}}^{\op} = (\operatorname{id}_{S_1 \amalg \{a_i\}}, \operatorname{id}_{S_1 \amalg \{a_i\}}).
\end{aligned}
\]

\noindent Hence, $C \amalg C_i$ arises as a glued-up object over $\subsm{\mathbf{G}}{V_i}$.

\noindent Finally, the corresponding refinement morphism $\subsm{\rho}{i}: \subsm{\mathbf{G}}{V_i} \rightarrow \mathbf{G}$ is given by:
\[
\begin{aligned}
&\subsm{\subsm{\rho}{i}}{\{1\}} = (\operatorname{id}_S, \mathfrak{i}_{V_i, S}), \quad
\subsm{\subsm{\rho}{i}}{\{2\}} = (\operatorname{id}_{S_1}, \mathfrak{i}_{\{a_i\}, S_1}), \\
&\subsm{\subsm{\rho}{i}}{\{1,2\}} = (\operatorname{id}_{S_1} \amalg \mathfrak{i}_{\{a_i\}, S_1}, \operatorname{id}_{S_1} \amalg \mathfrak{i}_{\{a_i\}, S_1}).
\end{aligned}
\]

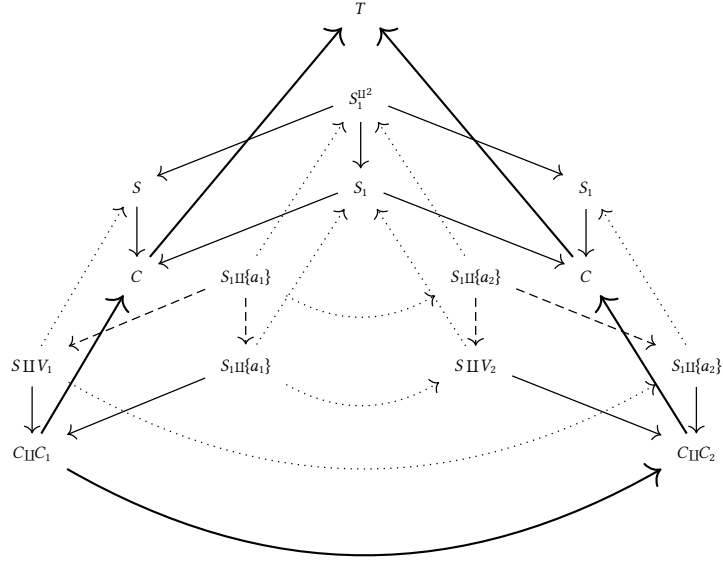
\begin{figure}[H]
\begin{center}
	\begin{tikzcd}[scale cd=1.1, column sep=1.5em, row sep=1.5em]
		&                             &                                                                                            & \scalebox{0.5}{$T$}                                   &                                                              &                              &              \\
		&                             &                                                                                            & \scalebox{0.5}{${S_1^\amalg}^2$}  \arrow[swap]{lld}{} \arrow[near end]{d}{} \arrow[]{rrd}{} &                                                              &                              &              \\
		&  \scalebox{0.5}{$S$}\arrow{d}{}     &                                                                                            & \scalebox{0.5}{$S_1$}  \arrow[swap]{lld}{} \arrow{rrd}{}           &                                                              &  \scalebox{0.5}{$S_1$}  \arrow{d}{}     &              \\
		&  \scalebox{0.5}{$C$}  \arrow[near end,thick]{rruuu}{} & \scalebox{0.5}{$S_1 \scalebox{0.8}{$\amalg$} \{a_1\}$} \arrow[ dotted]{ruu}{} \arrow[dashed,swap]{d}{} \arrow[dashed]{lld}{} \arrow[dotted, bend right]{rr}{} &                                     & \scalebox{0.5}{$S_1 \scalebox{0.8}{$\amalg$} \{a_2\}$} \arrow[swap,dotted]{luu}{} \arrow[ dashed]{d}{} \arrow[dashed, swap]{rrd}{}&  \scalebox{0.5}{$C$}  \arrow[thick,near end,swap]{lluuu}{} &              \\
		\scalebox{0.5}{$S \amalg V_1$}  \arrow[swap]{d}{}  \arrow[dotted]{ruu}{} \arrow[dotted, bend right]{rrrrrr}{}    &                             &  \scalebox{0.5}{$S_1 \scalebox{0.8}{$\amalg$} \{a_1\}$} \arrow[near start]{lld}{} \arrow[dotted]{ruu} \arrow[dotted,bend right]{rr}{}                                           &                                     &  \scalebox{0.5}{$S \amalg V_2 $}  \arrow[dotted,near end]{luu}{}  \arrow[swap,near start]{rrd}{}                                    &                              & \scalebox{0.5}{$S_1 \scalebox{0.8}{$\amalg$} \{a_2\}$} \arrow{d}{} \arrow[dotted,swap]{luu}{} \\
		\scalebox{0.5}{$C\scalebox{0.6}{$\coprod$} C_1$} \arrow[thick,swap]{ruu}\arrow[thick,bend right]{rrrrrr}{} &                             &                                                                                            &                                     &                                                              &                              & \scalebox{0.5}{$C\scalebox{0.6}{$\coprod$} C_2$}      \arrow[thick]{luu}   
	\end{tikzcd}
\end{center}
\caption{Diagram representation of $\mathcal{G}$ and its limits, where the top pushout squares each describe $\mathbf{G}$, the bottom-left pushout square represents $\subsm{\mathbf{G}}{V_1}$, the bottom-right pushout square represents $\subsm{\mathbf{G}}{V_2}$, the pointed arrows indicate the refinement maps, the bold front diagram describes $\lim \mathcal{G}$ and the other arrows are as described above.\\ }
\end{figure}
\end{example}

\section{Grothendieck topologies}\label{grothe}
\noindent The purpose of this section is to review the well-known notions related to Grothendieck topologies, with a particular emphasis on highlighting their intrinsic connection to the idea of gluing. 
Some of the definitions, notations, and results related to Grothendieck topologies presented here can also be found in \cite[\S1]{artin}. In this section, we assume that $\mathbb{C}$ admits pullbacks.

\subsection{Canonical functor associated with a sink}
\noindent We start this section with the definition of a sink in a category.
\begin{definition} We define {\sf a sink to $U$ in $\mathbb{C}$} to be a pair $(U, \subsm{(U_i,\iota_i)}{i\!\in \! \mathrm{I}})$ where $U\in \mathbb{C}$, and for each $i \in \mathrm{I}$, $U_i$ is an object in $\mathbb{C}$ and $\iota_i$ is a morphism in $\mathbb{C}$ from $ U_i$ to $U$. We call  {\sf a sink $(U, \subsm{(U_i,\iota_i)}{i\!\in \! \mathrm{I}})$ in $\mathbb{C}$} to be a sink to $U$ in $\mathbb{C}$, for some object $U$ in $\mathbb{C}$. We denote by $\subsm{\mathbf{Sink}}{}(\mathbb{C})$ (resp. $\subsm{\mathbf{Sink}}{U}(\mathbb{C})$) the set of sinks in $\mathbb{C}$ (resp. the set of sinks to $U$ in $\mathbb{C}$, where $U \in \mathbb{C}$). 
\end{definition}

\noindent To any sink in a category \(\mathbb{C}\), we associate a functor as follows.

\begin{definition} \label{gluingcover}
Let \(\EuScript{U} = \big(U, (U_i, \iota_i)_{i \in \mathrm{I}}\big)\) be a sink in \(\mathbb{C}\).

\begin{enumerate}
    \item The \textsf{canonical functor associated with \(\EuScript{U}\)}, denoted \(\gcov{\EuScript{U}} : \mathbb{S}_2(\mathrm{I}) \to \mathbb{C}^{\op}\), is defined as follows:
    \begin{itemize}
        \item On objects:
        \begin{align*}
            \gcov{\EuScript{U}}(i) &:= U_i, \\
            \gcov{\EuScript{U}}(i,j) &:= U_i \times_U U_j.
        \end{align*}
        \item On morphisms:
        \begin{align*}
            \gcov{\EuScript{U}}(\iuv{i}{j}) &:= {\proj{1}{U_i \subsm{\times}{\scalebox{0.7}{$U$}} U_j}}^\op, \\
            \gcov{\EuScript{U}}(\tau_{i,j}) &:= \varphi_{i,j}^\op,
        \end{align*}
        where \(\varphi_{i,j} : U_i \times_U U_j \to U_j \times_U U_i\) is the canonical isomorphism between the pullbacks.
    \end{itemize}

    \item Let \(f : V \to U\) be a morphism in \(\mathbb{C}\). The \textsf{base change of \(\gcov{\EuScript{U}}\) along \(f\)}, denoted \(\gcov{\EuScript{U}_V} : \mathbb{S}_2(\mathrm{I}) \to \mathbb{C}^\op\), is defined by:
    \begin{itemize}
        \item On objects: \(\gcov{\EuScript{U}_V}(a) := \gcov{\EuScript{U}}(a) \times_U V\), for all \(a \in \mathbb{S}_2(\mathrm{I})\),
        \item On morphisms: \(\gcov{\EuScript{U}_V}(g) := \gcov{\EuScript{U}}(g) \times \mathrm{id}_V\), for all \(g : a \to a'\) in \(\mathbb{S}_2(\mathrm{I})\).
    \end{itemize}
\end{enumerate}
\end{definition}

\noindent It is well known that in any category admitting pullbacks, regular epimorphisms are strict epimorphisms (see \cite[Section~1.4]{Low2016}). The following result generalizes this observation.

\begin{lemma} \label{gluthru}
Let \(\mathbb{C}\) be a category with pullbacks, and let \(\EuScript{U} = \big(U, (U_i, \iota_i)_{i \in \mathrm{I}}\big)\) be a sink in \(\mathbb{C}\). Suppose that \(U\) is a (universal) glued-up object through the family of morphisms \((\iota_i)_{i \in \mathrm{I}}\), over a split \(\mathbb{C}^\op\)-gluing functor \(\mathbf{G}\) of type \(\mathrm{I}\) satisfying \(\mathbf{G}(i) = U_i\) for all \(i \in \mathrm{I}\). Then \(U\) is also a (universal) glued-up \(\mathbb{C}^\op\)-object over the canonical functor \(\gcov{\EuScript{U}}\), through the same family \((\iota_i)_{i \in \mathrm{I}}\).
\end{lemma}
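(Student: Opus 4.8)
The plan is to show that a cone over the canonical functor $\gcov{\EuScript{U}}$ is ``the same data'' as a cone over the split gluing functor $\mathbf{G}$, so that the two limits coincide. Working in $\mathbb{C}^{\op}$, recall that a cone over $\gcov{\EuScript{U}}$ with vertex $W$ is a family of morphisms $W \to \gcov{\EuScript{U}}(a)$ in $\mathbb{C}^{\op}$ — equivalently a family of morphisms $\gcov{\EuScript{U}}(a) \to W$ in $\mathbb{C}$ — compatible with the two kinds of morphisms in $\mathbb{S}_2(\mathrm{I})$: the $\iuv{i}{j}$ (sent to projections $\proj{1}{U_i\times_U U_j}$) and the $\tau_{i,j}$ (sent to the canonical isomorphisms $\varphi_{i,j}$). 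First I would observe that, because $\varphi_{i,j}$ swaps the two projections, the commutation constraints coming from the $\tau$'s are automatically subsumed, so a cone over $\gcov{\EuScript{U}}$ with vertex $W$ (in $\mathbb{C}$) is precisely a family $(g_i : U_i \to W)_{i\in\mathrm{I}}$ together with $g_{i,j}:U_i\times_U U_j \to W$ satisfying $g_{i,j} = g_i\circ \proj{1}{U_i\times_U U_j} = g_j \circ \proj{2}{U_i\times_U U_j}$; i.e. it is exactly a family $(g_i)_{i}$ with $g_i\circ\proj{1}{\cdot} = g_j\circ\proj{2}{\cdot}$ for all $i,j$.

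Next I would identify this with the cone condition for $\mathbf{G}$. Here the key input is the hypothesis that $U$ is a glued-up object over $\mathbf{G}$ through $(\iota_i)_{i}$ with $\mathbf{G}(i)=U_i$: by the universal property of the pullback $U_i\times_U U_j$ (which exists since $\mathbb{C}$ has pullbacks), the equation $g_i\circ\proj{1}{\cdot}=g_j\circ\proj{2}{\cdot}$ is equivalent — once one knows the relevant square involving $\iota_i,\iota_j$ and $\mathbf{G}$'s structure maps is a pullback — to the compatibility $g_i\circ \mathbf{G}(\iuv{i}{j})^{\op}$-type relations defining a cone over $\mathbf{G}$. Concretely, I would use the ``glued up object is a joint pullback'' remark following Definition~\ref{gluingfunctor}: $U_i\times_U U_j \simeq \mathbf{G}(i)\times_{\mathbf{G}(i,j)}\mathbf{G}(j)$ via the structure maps of $\mathbf{G}$, and under this identification $\proj{1}{U_i\times_U U_j}$ corresponds to $\mathbf{G}(\iuv{i}{j})^{\op}$ and $\proj{2}{\cdot}$ to $\mathbf{G}(\iuv{j}{i})^{\op}$. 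Hence cones over $\gcov{\EuScript{U}}$ and cones over $\mathbf{G}$ with a given vertex are in natural bijection, respecting the cone morphisms; since $U$ with $(\iota_i)$ is terminal among cones over $\mathbf{G}$, it is terminal among cones over $\gcov{\EuScript{U}}$, i.e. $U = \lim \gcov{\EuScript{U}}$ through $(\iota_i)_i$.

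For the parenthetical ``universal'' assertion, I would simply transport this bijection along base change: for $f:V\to U$ one has $\gcov{\EuScript{U}_V}(a) = \gcov{\EuScript{U}}(a)\times_U V$ and $\mathbf{G}_V(a) = \mathbf{G}(a)\coprod_L V$ in $\mathbb{C}^{\op}$, which are the same construction, and the natural bijection of cones is compatible with these, so if $\mathbf{G}_V$ is a gluing functor for all $V$ then so is $\gcov{\EuScript{U}_V}$ and $U\times_U V$ remains a glued-up object.

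The main obstacle I anticipate is verifying cleanly that $U_i\times_U U_j$ really is the pullback $\mathbf{G}(i)\times_{\mathbf{G}(i,j)}\mathbf{G}(j)$ — that is, that the structure maps $\mathbf{G}(\iuv{i}{j})^{\op}, \mathbf{G}(\iuv{j}{i})^{\op}$ exhibit $\mathbf{G}(i,j)$ as (a quotient mediating) the fibre product of $U_i$ and $U_j$ over $U$. This is exactly where the hypothesis that $U$ is glued-up over $\mathbf{G}$ (a joint-pullback statement) is used, together with the split structure of $\mathbb{S}_2(\mathrm{I})$ which makes the ordered-pair bookkeeping match the two legs of the pullback; one must check the $\tau$-equivariance and that no extra identifications are introduced, so some care with the $\op$'s and with the precise form of $\varphi_{i,j}$ is needed, but it is routine diagram-chasing once the identification of the pullback is in place.
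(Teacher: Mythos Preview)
Your overall strategy — reduce to a comparison of cones over $\gcov{\EuScript{U}}$ and over $\mathbf{G}$ — is the right one and matches the paper's. But the technical step you lean on is wrong, and the remark you cite does not say what you use it for.

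You claim an identification $U_i\times_U U_j \simeq \mathbf{G}(i)\times_{\mathbf{G}(i,j)}\mathbf{G}(j)$ under which $\proj{1}{U_i\times_U U_j}$ corresponds to $\mathbf{G}(\iuv{i}{j})^{\op}$. This does not typecheck: in $\mathbb{C}$ the structure maps go $\mathbf{G}(i,j)\to\mathbf{G}(i)$, not the other way, so the right-hand ``pullback'' is not defined; and if you mean $U_i\times_U U_j\simeq\mathbf{G}(i,j)$, that is simply false in general. (Take $|\mathrm{I}|=3$ in $\mathbb{Sets}$ with $\mathbf{G}(1,2)=\emptyset$ but $U_1$ and $U_2$ glued transitively through $U_3$: then $U_1\times_U U_2$ is nonempty.) The ``joint pullback'' remark after Definition~\ref{gluingfunctor} describes the \emph{glued-up object} $U$ itself as a joint pullback in $\mathbb{C}^{\op}$ (equivalently a joint pushout in $\mathbb{C}$); it says nothing about $U_i\times_U U_j$.

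The paper's argument avoids this entirely and needs only one direction, not a bijection. Since $U$ is a cocone over $\mathbf{G}$ in $\mathbb{C}$, one has $\iota_i\circ\mathbf{G}(\iuv{i}{j})^{\op}=\iota_j\circ\mathbf{G}(\tau_{j,i}\circ\iuv{j}{i})^{\op}$, so the universal property of the pullback $U_i\times_U U_j$ yields a \emph{comparison map} $\varphi_{i,j}:\mathbf{G}(i,j)\to U_i\times_U U_j$ (not an isomorphism) with $\proj{1}\circ\varphi_{i,j}=\mathbf{G}(\iuv{i}{j})^{\op}$ and $\proj{2}\circ\varphi_{i,j}=\mathbf{G}(\tau_{j,i}\circ\iuv{j}{i})^{\op}$. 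Given any cone $(D,(g_i))$ over $\gcov{\EuScript{U}}$, precomposing the equality $g_i\circ\proj{1}=g_j\circ\proj{2}$ with $\varphi_{i,j}$ shows $(D,(g_i))$ is a cone over $\mathbf{G}$. Terminality of $U$ over $\mathbf{G}$ then produces the unique $\theta:U\to D$ with $g_i=\theta\circ\iota_i$; this $\theta$ is automatically a cone morphism over $\gcov{\EuScript{U}}$ because the maps at ordered pairs are determined by those at singletons. No bijection of cones, and no identification of $U_i\times_U U_j$ with anything built from $\mathbf{G}$, is needed. Your treatment of the universal case via base change is fine once this is fixed.
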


\begin{proof}
We prove that \((U, \subsm{(U_i, \iota_i)}{i\! \in\! \mathrm{I}})\) is a terminal cone over \(\gcov{\EuScript{U}}\) through \(\subsm{(\iota_i)}{i\! \in\! \mathrm{I}}\). Clearly, \((U, \subsm{(U_i, \iota_i)}{i\! \in\! \mathrm{I}})\) is a cone over \(\gcov{\EuScript{U}}\) by the definition of \(U_i \times_U U_j\).

\noindent Let \((D, \subsm{(\dindsi{\iota}{D}{i})}{i\! \in\! \mathrm{I}})\) be a cone over \(\gcov{\EuScript{U}}\). By the universal property of pullbacks, for all \(i, j \in \mathrm{I}\), there exists a unique morphism \(\varphi_{i,j}\) making the following diagram commute:
\begin{center}
\adjustbox{scale=0.7,center}{%
\begin{tikzcd}[column sep=normal, row sep=large]
                                                                                                                                                          &                                                                                     \\
                                                                                &D                                 &                                                                                     \\
                                                                                &  U    \arrow[swap, dashed]{u}{\exists ! \theta}  &                                                                                     \\
U_i \arrow{ru}{\iota_i} \arrow[bend left]{ruu}{\dindsi{\iota}{D}{i}} &                                                                              & U_j \arrow[swap]{lu}{\iota_j} \arrow[swap,bend right]{luu}{\dindsi{\iota}{D}{j}}  \\
                                                                                & U_i \times_U U_j \arrow{lu}{\pi_1^{U_i \times_U U_j}} \arrow[swap]{ru}{\pi_2^{U_j \times_U U_i}}            & \\                                                                                   
                                                                                & \mathbf{G}(i,j) \arrow[swap,dashed]{u}{\varphi_{i,j}}\arrow[bend left]{luu}{\mathbf{G}(\iuv{i}{j})}\arrow[swap,bend right]{ruu}{\mathbf{G}(\tau_{j,i}\circ \iuv{j}{i})} &

\end{tikzcd}}
\end{center}
Therefore, \((D, \subsm{(\dindsi{\iota}{D}{i})}{i\! \in\! \mathrm{I}})\) is a cone over \(\mathbf{G}\). Since \(U\) is a glued-up object over \(\mathbf{G}\) through \(\subsm{(\iota_i)}{i\! \in \!\mathrm{I}}\), there exists a unique morphism \(\theta: U \rightarrow D\) such that \(\dindsi{\iota}{D}{i} = \theta \circ \iota_i\) for all \(i \in \mathrm{I}\). The universal component of the result follows by an analogous argument.
\end{proof}

\subsection{Definitions}

\noindent We recall the definition of Grothendieck topologies on a category equipped with finite limits, which is a fundamental notion in algebraic geometry.

\begin{definition}\label{sito} 
\hspace{2em}
\begin{enumerate}	
\item A {\sf Grothendieck topology} on $\mathbb{C}$ is given by a subset of $\subsm{\mathbf{Sink}}{}(\mathbb{C})$, denoted by $\subsm{\mathbf{Cov}}{}(\mathbb{C})$, satisfying the following conditions:
	\begin{enumerate}
		\item If $\varphi: V\rightarrow U$ is an isomorphism in $\mathbb{C}$, then $(U, (V, \varphi)) \in \subsm{\mathbf{Cov}}{}(\mathbb{C})$.
		\item If $(U,\subsm{(U_i,\iota_i)}{i\!\in \! \mathrm{I}})\in \subsm{\mathbf{Cov}}{}(\mathbb{C})$, and $(U_i, \subsm{(\subsm{V}{ij}, \ell_{ij})}{j\!\in \! \mathrm{J}_i})\in \subsm{\mathbf{Cov}}{}(\mathbb{C})$, then $(U, \subsm{(\subsm{V}{ij}, \iota_i\circ \ell_{ij})}{i\!\in \! \mathrm{I}, j\in \subsm{\mathrm{J}}{i}})$ is in $\subsm{\mathbf{Cov}}{}(\mathbb{C})$. We refer to this property as the composability property of coverings. 
		\item If $(U,\subsm{(\subsm{U}{i},\iota_i)}{i\!\in \! \mathrm{I}})\in \subsm{\mathbf{Cov}}{}(\mathbb{C})$, and $V\rightarrow U$ is a morphism in $\mathbb{C}$, 
		then $(V, \subsm{(\subsm{U}{i}\subsm{\times}{\scalebox{0.7}{$U$}} V, \proj{2}{\subsm{U}{i}\subsm{\times}{\scalebox{0.7}{$U$}} V})}{i\!\in \! \mathrm{I}})\in \subsm{\mathbf{Cov}}{}(\mathbb{C})$. We refer to this property as the base change stable property of coverings.
			\end{enumerate}
\item	A {\sf Grothendieck site} is a pair $(\mathbb{C}, \subsm{\mathbf{Cov}}{}(\mathbb{C}))$, where $\mathbb{C}$ is a category and $\subsm{\mathbf{Cov}}{}(\mathbb{C})$ is a Grothendieck topology. 
\end{enumerate}
\end{definition}
%
%
%
\noindent We now recall the definition of a localized Grothendieck site. 
\begin{definition}
Let $(\mathbb{C}, \subsm{\mathbf{Cov}}{}(\mathbb{C}))$ to be a Grothendieck site and $U \in \mathbb{C}$.
\begin{enumerate}
\item We define the {\sf localized Grothendieck topology of $(\mathbb{C}, \subsm{\mathbf{Cov}}{}(\mathbb{C}))$ at $U$}, denoted by 
$$ \begin{array}{lll} \subsm{\mathbf{Cov}}{}(\mathbb{C} \! \downarrow \! U) &=&\big \{ ((V, \delta),\subsm{((V_i, \delta_i),\iota_i)}{i\!\in \! \mathrm{I}})| (V,\subsm{(\subsm{V}{i},\iota_i)}{i\!\in \! \mathrm{I}})\in \subsm{\mathbf{Cov}}{}(\mathbb{C}), (V,\delta) ,(V_i, \delta_i) \in {(\mathbb{C} \! \downarrow \! U)}, \\
&& \iota_i : (V_i, \delta_i) \rightarrow (V, \delta) \text{ morphism in $(\mathbb{C} \! \downarrow \! U)$}, \forall i \in \mathrm{I} \big\}.\end{array} $$
\item We define the {\sf localized Grothendieck site of $(\mathbb{C}, \subsm{\mathbf{Cov}}{}(\mathbb{C}))$ at $U$} to be the pair $$\big((\mathbb{C} \! \downarrow \! U), \subsm{\mathbf{Cov}}{}(\mathbb{C} \! \downarrow \! U) \big).$$ 
\end{enumerate}
\end{definition}
\begin{example}\label{settop}
Let \(\mathbb{C} \in \{ \mathbb{Sets}, \mathbb{(o)Top} \}\). The category \(\mathbb{C}\) can be equipped with a Grothendieck topology as follows.

\noindent Given an index set \(I\) and a family of objects \((U_i)_{i \in I}\) in \(\mathbb{C}\), we set \(U := \coprod_{i \in I} U_i \), and let \(\iota_i : U_i \to U\) denote the canonical coproduct maps.

\noindent We define the covering families \(\mathbf{Cov}(\mathbb{C})\) to be the collection of all tuples of the form \((U, (U_i, \iota_i)_{i \in I})\), for arbitrary index sets \(I\) and families \((U_i)_{i \in I}\) in \(\mathbb{C}\).
\end{example}

\subsection{Effective gluing on a Grothendieck site }
In algebraic geometry, gluing typically involves a gluing datum with cocyle conditions. In our definition of a gluing functor, these cocyle conditions are not included; instead, we encode it as a property of the functor itself, which we call an effective gluing functor on a site.
\begin{definition}\label{cocycle}
  Let $(\mathbb{C}, \subsm{\mathbf{Cov}}{}(\mathbb{C}))$ to be a Grothendieck site, and $\mathbf{G}$ be a split $\mathbb{C}^\op$-gluing functor of type $\mathrm{I}$. 
  We say that $\mathbf{G}$ is an {\sf effective split $\mathbb{C}^\op$-gluing functor} if, for all $i,j,k\in \mathrm{ I}$,
  there exist an isomorphism $\widetilde{\varphi_{i,j}}^k$ from $\mathbf{G}(j,i)\times_{\mathbf{G}(j)} \mathbf{G}(j,k)$ to $\mathbf{G}(i,j)\times_{\mathbf{G}(i)} \mathbf{G}(i,k)$ such that, for all $i,j \in \mathrm{I}$, the following assertions hold: 
  \begin{enumerate}
  \item the following diagram commutes: 
  $$\xymatrix{ \mathbf{G}(j,i)\times_{\mathbf{G}(j)} \mathbf{G}(j,k) \ar[d]_{\widetilde{\pi_{j,i}}^k}\ar[r]^{\widetilde{\varphi_{i,j}}^k} &\mathbf{G}(i,j)\times_{\mathbf{G}(i)} \mathbf{G}(j,k) \ar[d]^{\widetilde{\pi_{i,j}}^k} \\ 
  \mathbf{G}(j,i)   \ar[r]_{\mathbf{G}(\tau_{i,j})^\op} & \mathbf{G}(i,j)
  },$$
  where $\widetilde{\pi_{i,j}}^k$ is the canonical pullback map. 
We refer to this condition as the \textsf{cocycle condition}.

  \item The following diagram commutes: 
   $$\xymatrix{ \mathbf{G}(j,i)\times_{\mathbf{G}(j)} \mathbf{G}(j,k) \ar[dr]_{\widetilde{\varphi_{k,j}}^i}\ar[rr]^{\widetilde{\varphi_{i,j}}^k} &&\mathbf{G}(i,j)\times_{\mathbf{G}(i)} \mathbf{G}(j,k) \ar[dl]^{\widetilde{\varphi_{k,i}}^j} \\ 
& \mathbf{G}(k,i)\times_{\mathbf{G}(k)} \mathbf{G}(k,j)  &
  },$$
  \item $\widetilde{\varphi_{j,i}}^k= ({\widetilde{\varphi_{i,j}}^k})^{-1}$;
  \item $\mathbf{G} (\mathfrak{i}_{i,j})^\op$ is a regular epimorphism.
  \end{enumerate} 
\end{definition}
\noindent Even though the cocycle condition cannot be defined for a general non-split gluing functor, we can introduce a stronger version that we show implies the cocycle condition.

\begin{definition}
  Let $(\mathbb{C}, \subsm{\mathbf{Cov}}{}(\mathbb{C}))$ to be a Grothendieck site,  $\mathbf{G}$ be a split $\mathbb{C}^\op$-gluing functor of type $\mathrm{I}$, and $L$ be a glued-up object over $\mathbf{G}$ via the family \(\subsm{(\iota_i)}{i \in \mathrm{I}}\). 

\noindent We say that $\mathbf{G}$ is a \textsf{strong effective (split) $\mathbb{C}^\op$-gluing functor} if, for all $i,j \in \mathrm{I}$ with $i \neq j$, 
\begin{itemize}
\item the canonical morphism
\[
\langle \Gnij{\mbf{G}}{\iuv{i}{j}}^\op, \Gnij{\mbf{G}}{(\tau_{j,i}\circ)\iuv{j}{i}}^\op \rangle:  \mathbf{G}(i,j) \longrightarrow \mathbf{G}(i) \times_L \mathbf{G}(j)
\]
is an isomorphism.
\item $\mathbf{G} (\mathfrak{i}_{i,j})^\op$ is a regular epimorphism.
\end{itemize}
\end{definition}


\begin{lemma}\label{strongsplit}
  Let $(\mathbb{C}, \subsm{\mathbf{Cov}}{}(\mathbb{C}))$ to be a Grothendieck site,  $\mathbf{G}$ be a split $\mathbb{C}^\op$-gluing functor of type $\mathrm{I}$, and $L$ be a glued-up object over $\mathbf{G}$ via the family \(\subsm{(\iota_i)}{i \in \mathrm{I}}\).  Then $\mathbf{G}$ is a split effective $\mathbb{C}^\op$-gluing functor of type $\mathrm{I}$.
\end{lemma}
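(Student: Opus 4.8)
\noindent The plan is to take $\mathbf{G}$ to be a strong effective split $\mathbb{C}^\op$-gluing functor, as the statement intends, and to produce the datum of Definition~\ref{cocycle} together with conditions (1)--(4). Write $\theta_{i,j}\colon \mathbf{G}(i,j)\to \mathbf{G}(i)\times_L\mathbf{G}(j)$ for the canonical morphism whose two components are the structure maps $\mathbf{G}(\mathfrak{i}_{i,j})^\op\colon\mathbf{G}(i,j)\to\mathbf{G}(i)$ and $\mathbf{G}(\tau_{j,i}\circ\mathfrak{i}^s_{j,i})^\op\colon\mathbf{G}(i,j)\to\mathbf{G}(j)$; by the definition of strong effectiveness, $\theta_{i,j}$ is an isomorphism for $i\neq j$ and $\mathbf{G}(\mathfrak{i}_{i,j})^\op$ is a regular epimorphism. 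Condition (4) of Definition~\ref{cocycle} is thus part of the hypothesis and transfers verbatim, so the real task is to build the isomorphisms $\widetilde{\varphi_{i,j}}^k$ and check (1)--(3).

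\noindent The key point is that, transported along the $\theta$'s, every iterated fibre product occurring in Definition~\ref{cocycle} becomes the symmetric triple fibre product over $L$. Indeed, under $\theta_{j,i}$ the structure map $\mathbf{G}(j,i)\to\mathbf{G}(j)$ is the projection $\mathbf{G}(j)\times_L\mathbf{G}(i)\to\mathbf{G}(j)$, and similarly for the structure map $\mathbf{G}(j,k)\to\mathbf{G}(j)$; hence the universal property of pullbacks yields a canonical isomorphism
\[
\mathbf{G}(j,i)\times_{\mathbf{G}(j)}\mathbf{G}(j,k)\ \xrightarrow{\ \sim\ }\ \mathbf{G}(i)\times_L\mathbf{G}(j)\times_L\mathbf{G}(k)=:T_{ijk},
\]
and likewise $\mathbf{G}(i,j)\times_{\mathbf{G}(i)}\mathbf{G}(i,k)\xrightarrow{\ \sim\ }T_{ijk}$, the two legs now glued along their $\mathbf{G}(i)$-coordinate. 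We define $\widetilde{\varphi_{i,j}}^k$ as the first of these followed by the inverse of the second; it is an isomorphism by construction, and up to the evident swaps this also identifies the targets appearing in the displays of Definition~\ref{cocycle}.

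\noindent With everything moved to the $T_{ijk}$'s, (1)--(3) become routine diagram chases. For (1): $\widetilde{\pi_{i,j}}^k$ corresponds to the projection $T_{ijk}\to\mathbf{G}(i)\times_L\mathbf{G}(j)$, $\widetilde{\pi_{j,i}}^k$ to the projection $T_{ijk}\to\mathbf{G}(j)\times_L\mathbf{G}(i)$, and $\mathbf{G}(\tau_{i,j})^\op$ to the coordinate swap $\mathbf{G}(j)\times_L\mathbf{G}(i)\to\mathbf{G}(i)\times_L\mathbf{G}(j)$; both ways around the square are then the projection $T_{ijk}\to\mathbf{G}(i)\times_L\mathbf{G}(j)$, which settles the cocycle square. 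For (3): $\widetilde{\varphi_{j,i}}^k$ is built from the analogous identifications after the canonical symmetry $T_{jik}\cong T_{ijk}$ exchanging the first two factors, so $\widetilde{\varphi_{i,j}}^k$ and $\widetilde{\varphi_{j,i}}^k$ are mutually inverse. For (2): each of $\widetilde{\varphi_{i,j}}^k$, $\widetilde{\varphi_{k,i}}^j$, $\widetilde{\varphi_{k,j}}^i$ is a composite of canonical identifications of nested pullbacks with a triple product over $L$, and the triangle asserts precisely that these identifications compose correctly; this holds because every morphism in the triangle is uniquely determined by its projections to $\mathbf{G}(i)$, $\mathbf{G}(j)$ and $\mathbf{G}(k)$, so both paths agree.

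\noindent The only real obstacle is bookkeeping. One has to fix the canonical isomorphisms between the various nested pullbacks and $T_{ijk}$ once and for all, check that the reorderings and symmetries used are mutually coherent (the standard associativity and symmetry coherence for pullbacks), and dispose of the degenerate cases in which two or more of $i,j,k$ coincide, where the fibre products collapse via the conventions $\mathbf{G}(i,i)=\mathbf{G}(i)$ and $\mathbf{G}(\mathfrak{i}^s_{i,i})=\mathbf{G}(\tau_{i,i})=\operatorname{id}$, so that the diagrams reduce to identities by inspection. None of this uses anything beyond the universal property of pullbacks applied a few times; there is no genuine difficulty past keeping the coordinates straight.
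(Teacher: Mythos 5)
Your proposal is correct and follows essentially the same route as the paper: the paper likewise uses the strong-effectiveness isomorphisms $\langle \mathbf{G}(\mathfrak{i}_{i,j})^\op, \mathbf{G}(\tau_{j,i}\circ\mathfrak{i}_{j,i})^\op\rangle$ to identify each nested pullback $\mathbf{G}(i,j)\times_{\mathbf{G}(i)}\mathbf{G}(i,k)$ with a triple fibre product over $L$ (presented there as $U^i_{j,k}=(\mathbf{G}(i)\times_L\mathbf{G}(j))\times_{\mathbf{G}(i)}(\mathbf{G}(i)\times_L\mathbf{G}(k))$, your $T_{ijk}$), and defines $\widetilde{\varphi_{i,j}}^k$ by conjugating the canonical symmetry between these presentations, so that the cocycle and inverse conditions are inherited from the canonical identifications. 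Your explicit handling of condition (4) and of the degenerate index cases is a minor addition the paper leaves implicit.
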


\begin{proof}
Let $\mathbf{G}$ be a split strong effective $\mathbb{C}^\op$-gluing functor of type $\mathrm{I}$. For each triple $i,j,k \in \mathrm{I}$, define:

\begin{itemize}
    \item \( U_{j,k}^i := (\mathbf{G}(i) \times_L \mathbf{G}(j)) \times_{\mathbf{G}(i)} (\mathbf{G}(i) \times_L \mathbf{G}(k)) \). For simplicity, assume \( U_{j,k}^i = U_{k,j}^i \).
    \item \( V_{j,k}^i := \mathbf{G}(i,j) \times_{\mathbf{G}(i)} \mathbf{G}(i,k) \). Again, assume \( V_{j,k}^i = V_{k,j}^i \).
    \item \( \widetilde{c_{i,j}}^k \) is the canonical isomorphism from \( U_{j,k}^i \) to \( U_{i,k}^j \).
    \item \( \widetilde{d_{j,k}}^i \) is the canonical isomorphism from \( V_{j,k}^i \) to \( U_{j,k}^i \), defined by:
    \[
    \langle \langle \mathbf{G}(\mathfrak{i}_{i,j})^\op, \mathbf{G}(\tau_{j,i} \circ \mathfrak{i}_{j,i})^\op \rangle, \langle \mathbf{G}(\mathfrak{i}_{i,k})^\op, \mathbf{G}(\tau_{k,i} \circ \mathfrak{i}_{k,i})^\op \rangle \rangle.
    \]
    \item \( \widetilde{\varphi_{i,j}}^k \) is the canonical isomorphism from \( V_{j,k}^i \) to \( V_{i,k}^j \), defined by:
    \[
    \widetilde{\varphi_{i,j}}^k :=(\widetilde{d_{i,j}}^k)^{-1} \circ \widetilde{c_{i,j}}^k \circ  \widetilde{d_{j,i}}^k.
    \]
\end{itemize}

\noindent One can verify that the isomorphisms \( \widetilde{\varphi_{i,j}}^k \) satisfy the cocycle condition in Definition~\ref{cocycle}, since the canonical isomorphisms \( \widetilde{c_{i,j}}^k \) do. The other property is also deduce easily from the definition of \( \widetilde{\varphi_{i,j}}^k \)
\end{proof}

\noindent The following proposition shows that the effectiveness of a gluing functor $\mathbf{G}$ is equivalent to the classical notion of gluing data satisfying the cocycle condition. In this setting, in the category of topological spaces, requiring the gluing functor to be effective amounts to \(\mathbf{G}(i)\) being identifiable as a subobject of the glued object, and \(\mathbf{G}(i,j)\) being identifiable to the intersection \(\mathbf{G}(i) \cap \mathbf{G}(j)\) within the glued object, for all $i,j\in \mathrm{I}$.

\begin{proposition} \label{eqrel} In this proposition, we use the notation of Definition-Lemma \ref{genera}. Let $\mathbb{C} \in \big\{ \mathbb{Sets}, \mathbb{(o)Top} \big\}$ with the Grothendieck topology as defined in Example \ref{settop}, and let $\mathbf{G}$ be a functor from $\mathbb{S}_2(\mathrm{I})$ to $\mathbb{C}^{\op}$ such that \(\limi{\mathbf{G}} \simeq (U, \iota)\), where \(\iota = \subsm{(\iota_i)}{i \in \mathrm{I}}\).

The following statements are equivalent:
\begin{enumerate}
    \item $\mathbf{G}$ is a split effective $\mathbb{C}^\op$-gluing functor.
    \item \begin{itemize} 
    \item $\Rel{\mathbf{G}}$ is a congruence relation, and 
    \item for all \(i, j \in \mathrm{I}\), the morphism \(\mathbf{G}(\mathfrak{i}_{i,j})^{\op}\) are one-to-one. (resp. topological embeddings when $\mathbb{C}= \mathbb{(o)Top}$)
    \end{itemize}
    \item{itemize}
    \item For all \(i,j \in \mathrm{I}\),
\begin{itemize} 
\item $    {\dindi{\iota}{Q}{\mathbf{G}}}_{\! i}\left(\Gnij{\mathbf{G}}{\iuv{i}{j}}^{\op}(\mathbf{G}(i,j))\right) = {\dindi{\iota}{Q}{\mathbf{G}}}_{\! i}(\mathbf{G}(i)) \cap {\dindi{\iota}{Q}{\mathbf{G}}}_{\! j}(\mathbf{G}(j)),$
    and 
    \item \( \iota_i\) and \(\mathbf{G}(\mathfrak{i}_{i,j})^{\op}\) are one-to-one. (resp. topological embeddings when $\mathbb{C}= \mathbb{(o)Top}$)
    \end{itemize}
    \item $\mathbf{G}$ is a strong split effective $\mathbb{C}^\op$-gluing functor.
\end{enumerate}

In particular, each \(\iota_i\) is injective (resp. topological embedding when $\mathbb{C}= \mathbb{(o)Top}$) for all \(i \in \mathrm{I}\).
\end{proposition}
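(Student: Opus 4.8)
The plan is to prove the cycle $(1)\Rightarrow(2)\Rightarrow(3)\Rightarrow(4)\Rightarrow(1)$ after fixing an explicit model of the glued object, and then to read the final ``in particular'' clause off statement~(3). Choosing a pair sorting map on $\mathrm{I}$ and invoking Definition-Lemma~\ref{genera} (legitimate under the standing hypothesis $\Goij{\mathbf{G}}{i}{i}=\Goi{\mathbf{G}}{i}$ with identity transition maps, cf.\ the Remark following Definition~\ref{gluingfunctor}), I identify $\limi{\mathbf{G}}$ with the standard representative, so that $U=\subsm{Q}{\mathbf{G}}=\bigl(\subsm{\coprod}{i}\Goi{\mathbf{G}}{i}\bigr)/\widebar{\Rel{\mathbf{G}}}$, the $\iota_i$ are the canonical maps, and $\widebar{\Rel{\mathbf{G}}}$ is the equivalence relation generated by $\Rel{\mathbf{G}}$. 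Writing $\subsm{p}{ij}:=\Gnij{\mathbf{G}}{\iuv{i}{j}}^{\op}\colon\Goij{\mathbf{G}}{i}{j}\to\Goi{\mathbf{G}}{i}$ and $\subsm{q}{ij}:=\Gnij{\mathbf{G}}{(\tau_{j,i}\circ)\iuv{j}{i}}^{\op}\colon\Goij{\mathbf{G}}{i}{j}\to\Goi{\mathbf{G}}{j}$, I record three facts used repeatedly: (i) the cone identity $\iota_i\circ\subsm{p}{ij}=\iota_j\circ\subsm{q}{ij}$; (ii) for $x\in\Goi{\mathbf{G}}{i}$ and $y\in\Goi{\mathbf{G}}{j}$ one has $\iota_i(x)=\iota_j(y)$ iff $(x,i)\widebar{\Rel{\mathbf{G}}}(y,j)$, while $(x,i)\Rel{\mathbf{G}}(y,j)$ holds iff $(x,y)$ lies in the image of $\langle\subsm{p}{ij},\subsm{q}{ij}\rangle\colon\Goij{\mathbf{G}}{i}{j}\to\Goi{\mathbf{G}}{i}\times_{\subsm{Q}{\mathbf{G}}}\Goi{\mathbf{G}}{j}$; (iii) $\Rel{\mathbf{G}}$ is always reflexive and symmetric (symmetry through the isomorphisms $\tau_{i,j}$). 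Finally, unwinding the regular-(epi/mono) conditions in $\mathbb{Sets}$ and $\mathbb{(o)Top}$ shows that clause~(4) of Definition~\ref{cocycle} and the second bullet of the strong-effective definition both say exactly that $\subsm{p}{ij}$ is one-to-one (resp.\ a topological embedding), which is how (2)--(3) phrase it.

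\emph{$(1)\Rightarrow(2)$.} The clause on $\mathbf{G}(\iuv{i}{j})^{\op}$ gives the one-to-one (resp.\ embedding) part directly, so by (iii) it remains to prove that $\Rel{\mathbf{G}}$ is transitive, hence an equivalence relation (a congruence, in $\mathbb{Sets}$), so that $\widebar{\Rel{\mathbf{G}}}=\Rel{\mathbf{G}}$. Suppose $(x,i)\Rel{\mathbf{G}}(y,j)$ with witness $u\in\Goij{\mathbf{G}}{i}{j}$ and $(y,j)\Rel{\mathbf{G}}(z,k)$ with witness $v\in\Goij{\mathbf{G}}{j}{k}$. The equality $\subsm{q}{ij}(u)=y=\subsm{p}{jk}(v)$ lets me assemble (after the relevant $\tau$-twist) an element of $\mathbf{G}(j,i)\times_{\mathbf{G}(j)}\mathbf{G}(j,k)$; transporting it along $\widetilde{\varphi_{i,j}}^{k}$ and examining the two pullback projections of its image in $\mathbf{G}(i,j)\times_{\mathbf{G}(i)}\mathbf{G}(i,k)$, the commuting square of Definition~\ref{cocycle}(1) forces the first projection to be $u$, so the second is some $w\in\Goij{\mathbf{G}}{i}{k}$ with $\subsm{p}{ik}(w)=\subsm{p}{ij}(u)=x$; re-running the construction with $i$ and $k$ interchanged and comparing outputs via the triangle and inverse relations of Definition~\ref{cocycle}(2)--(3) gives $\subsm{q}{ik}(w)=\subsm{q}{jk}(v)=z$. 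Hence $(x,i)\Rel{\mathbf{G}}(z,k)$ (the degenerate index patterns being handled by $\Goij{\mathbf{G}}{i}{i}=\Goi{\mathbf{G}}{i}$).

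\emph{$(2)\Rightarrow(3)\Rightarrow(4)$.} Assuming (2), we have $\widebar{\Rel{\mathbf{G}}}=\Rel{\mathbf{G}}$. If $\iota_i(x)=\iota_i(x')$ then $(x,i)\Rel{\mathbf{G}}(x',i)$, which via $\Goij{\mathbf{G}}{i}{i}=\Goi{\mathbf{G}}{i}$ (identity maps) forces $x=x'$, so $\iota_i$ is injective; when $\mathbb{C}=\mathbb{(o)Top}$ it is moreover open continuous by Definition-Lemma~\ref{genera}, hence a topological embedding. The image identity of~(3) follows: ``$\subseteq$'' is (i), and if $\iota_i(x)=\iota_j(y)$ then $(x,i)\Rel{\mathbf{G}}(y,j)$, so $x=\subsm{p}{ij}(u)$ for some $u$ and $\iota_i(x)\in\iota_i\bigl(\subsm{p}{ij}(\Goij{\mathbf{G}}{i}{j})\bigr)$; in the topological case both sides carry the subspace topology from $\subsm{Q}{\mathbf{G}}$. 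The remaining clause of~(3) is inherited from~(2). Now assume (3): for $i\neq j$ the map $\langle\subsm{p}{ij},\subsm{q}{ij}\rangle\colon\Goij{\mathbf{G}}{i}{j}\to\Goi{\mathbf{G}}{i}\times_{\subsm{Q}{\mathbf{G}}}\Goi{\mathbf{G}}{j}$ is injective because $\subsm{p}{ij}$ is, and surjective because any $(x,y)$ with $\iota_i(x)=\iota_j(y)$ satisfies $\iota_i(x)\in\iota_i(\Goi{\mathbf{G}}{i})\cap\iota_j(\Goi{\mathbf{G}}{j})=\iota_i\bigl(\subsm{p}{ij}(\Goij{\mathbf{G}}{i}{j})\bigr)$, so $x=\subsm{p}{ij}(u)$ (injectivity of $\iota_i$) and then $y=\subsm{q}{ij}(u)$ (by (i) and injectivity of $\iota_j$); in $\mathbb{(o)Top}$, $\subsm{p}{ij}$ being an embedding upgrades $\langle\subsm{p}{ij},\subsm{q}{ij}\rangle$ to an embedding, hence a homeomorphism onto the fibre product. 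Together with the regular-epi clause (again from~(3)) this is~(4).

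\emph{$(4)\Rightarrow(1)$ and conclusion.} This last implication is Lemma~\ref{strongsplit}, closing the cycle, and the ``in particular'' statement is the first clause of~(3), so it holds under any of the four equivalent conditions. I expect the transitivity step in $(1)\Rightarrow(2)$ to be the main obstacle: the cocycle isomorphisms $\widetilde{\varphi_{i,j}}^{k}$ must be untwisted carefully against the $\tau_{i,j}$'s and the pullback projections to manufacture the witness $w$, and all three compatibility conditions of Definition~\ref{cocycle} are needed to identify its two components; the $\mathbb{(o)Top}$ refinements (carrying openness of the maps and the subspace topologies through every step) are routine but must not be dropped.
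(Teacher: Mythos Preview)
Your proof is correct and follows essentially the same cycle $(1)\Rightarrow(2)\Rightarrow(3)\Rightarrow(4)\Rightarrow(1)$ as the paper, with the same key ideas at each step: the cocycle data yield transitivity of $\Rel{\mathbf{G}}$; the resulting equivalence relation gives injectivity of $\iota_i$ and the image identity; the image identity plus injectivity of $\iota_i$ and $\subsm{p}{ij}$ give the pullback isomorphism; and Lemma~\ref{strongsplit} closes the loop. The only presentational differences are that you place the injectivity of $\iota_i$ in $(2)\Rightarrow(3)$ rather than at the end of $(1)\Rightarrow(2)$, and you verify $(3)\Rightarrow(4)$ by a direct element-chase on $\langle\subsm{p}{ij},\subsm{q}{ij}\rangle$ rather than the paper's commutative-square argument with the restricted isomorphisms $\widetilde{\iota_i}$; neither affects substance.
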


\begin{proof}
Let \( U_i := \mathbf{G}(i) \), \( U_{ij} := \Gnij{\mathbf{G}}{\iuv{i}{j}}^{\op}(\mathbf{G}(i,j)) \), \( \varphi_{ij} := \mathbf{G}(\tau_{i,j}^\op) \), and \(\iota_i := {\dindi{\iota}{Q}{\mathbf{G}}}_{\! i}\).

\smallskip

\noindent \textbf{(1) $\Rightarrow$ (2)} \quad
We demonstrate that \(\Rel{\mathbf{G}}\) is an equivalence relation. Reflexivity and symmetry are immediate. For transitivity, consider \((x,i), (y,j), (z,k) \in \coprod_{i \in \mathrm{I}} \Goi{\mathbf{G}}{i}\) such that \((x,i) \Rel{\mathbf{G}} (y,j)\) and \((y,j) \Rel{\mathbf{G}} (z,k)\). By definition, there exist \( u \in \Goij{\mathbf{G}}{j}{i} \) and \( v \in \Goij{\mathbf{G}}{j}{k} \) such that:
\begin{itemize}
    \item \( x = \Gnij{\mathbf{G}}{\tau_{i,j} \circ \iuv{i}{j}}^{\op}(u), \quad y = \Gnij{\mathbf{G}}{\iuv{j}{i}}^{\op}(u) \),
    \item \( y = \Gnij{\mathbf{G}}{\iuv{j}{k}}^{\op}(v), \quad z = \Gnij{\mathbf{G}}{\tau_{k,j} \circ \iuv{k}{j}}^{\op}(v) \).
\end{itemize}

Then, \((u,v) \in \mathbf{G}(j,i) \times_{\mathbf{G}(j)} \mathbf{G}(j,k)\). By the properties of a split effective gluing functor, we obtain \(\widetilde{\varphi_{i,j}}^k(u,v) \in \mathbf{G}(i,j) \times_{\mathbf{G}(i)} \mathbf{G}(i,k)\) for some isomorphism \(\widetilde{\varphi_{i,j}}^k\) as in Definition \ref{cocycle}. Consequently,
$$\widetilde{\pi_{j,i}}^k ( \widetilde{\varphi_{i,j}}^k(u,v)) = \mathbf{G} (\tau_{i,j})^\op ( \widetilde{\pi_{i,j}}^k(u,v)) = \mathbf{G} (\tau_{i,j})^\op ( u)$$
and there exists \( w \in \mathbf{G}(i,k) \) such that:
$$ \Gnij{\mathbf{G}}{\iuv{i}{k}}^{\op} (w) = \Gnij{\mathbf{G}}{\iuv{i}{j}}^{\op}(\mathbf{G} (\tau_{i,j})^\op ( u)) = x. $$

We recall that from the cocycle condition, we have the following commutative diagram:
$$
\xymatrix{
    \mathbf{G}(j,i)\times_{\mathbf{G}(j)} \mathbf{G}(j,k) \ar[dr]_{\widetilde{\varphi_{k,j}}^i} \ar[rr]^{\widetilde{\varphi_{i,j}}^k} && \mathbf{G}(i,j)\times_{\mathbf{G}(i)} \mathbf{G}(i,k) \ar[dl]^{\widetilde{\varphi_{k,i}}^j} \\
    & \mathbf{G}(k,i)\times_{\mathbf{G}(k)} \mathbf{G}(k,j) &
}
$$

Since \(\widetilde{\varphi_{i,j}}^k (u,v) = (\mathbf{G} (\tau_{i,j})^\op ( u), w)\), we obtain from the commutativity of the previous diagram that:
$$ \widetilde{\varphi_{k,j}}^i (u,v) = (\mathbf{G} (\tau_{k,i})^\op (w), \mathbf{G} (\tau_{k,j})^\op (v)) $$

Thus, \( z = \Gnij{\mathbf{G}}{\tau_{k,j} \circ \iuv{k}{j}}^{\op}(v) = \Gnij{\mathbf{G}}{\tau_{k,i} \circ \iuv{k}{i}}^{\op}(w) \), and hence \((x,i) \Rel{\mathbf{G}} (z,k)\), proving transitivity.

Since morphisms \(\Gnij{\mathbf{G}}{\tau_{i,j} \circ \iuv{i}{j}}^{\op}\) and \(\Gnij{\mathbf{G}}{\iuv{i}{j}}^{\op}\) are morphisms in \(\mathbb{C}\), \(\Rel{\mathbf{G}}\) is a congruence relation.

Moreover, since \(\Rel{\mathbf{G}}\) is an equivalence relation, each map \(\iota_i\) is injective, and the resulting maps are topological embeddings when \(\mathbb{C}=\mathbb{(o)Top}\). This follows from the assumption that the morphisms \(\mathbf{G}(\mathfrak{i}_{i,j})^{\op}\) are topological embeddings, as established in Definition-Lemma~\ref{genera}.

\noindent \textsf{(2) $\Rightarrow$ (3)} \quad
Let \( i,j \in \mathrm{I} \). Since \(\iota_i \circ \Gnij{\mathbf{G}}{\iuv{i}{j}}^{\op} = \iota_j \circ \Gnij{\mathbf{G}}{\iuv{j}{i}}^{\op} \), it follows that:
\[
\iota_i(\Gnij{\mathbf{G}}{\iuv{i}{j}}^{\op}(\mathbf{G}(i,j))) \subseteq \iota_i(\mathbf{G}(i)) \cap \iota_j(\mathbf{G}(j)).
\]
Conversely, let \( x \in \iota_i(\mathbf{G}(i)) \cap \iota_j(\mathbf{G}(j)) \). Then \( x = \pi(u, i) = \pi(v, j) \) for some \( u \in \Goi{\mathbf{G}}{i}, v \in \Goi{\mathbf{G}}{j} \), implying \((u,i) \Rel{\mathbf{G}} (v,j)\) since $\Rel{\mathbf{G}}$ is an equivalence relation by assumption. So there exists \( w \in \Goij{\mathbf{G}}{i}{j} \) with \( u = \Gnij{\mathbf{G}}{\iuv{i}{j}}^{\op}(w) \), hence \( x \in \iota_i(\Gnij{\mathbf{G}}{\iuv{i}{j}}^{\op}(\mathbf{G}(i,j))) \).

\smallskip

\noindent \textsf{(3) $\Rightarrow$ (4)} \quad
Let \(\widetilde{\iota_i}\) denote \(\iota_i\) restricted to its image, and likewise for \(\widetilde{\iota_j \circ \Gnij{\mathbf{G}}{\iuv{j}{i}}^{\op}}\). Consider the following diagram:
\[
\xymatrix{
\Goij{\mathbf{G}}{i}{j}
\ar[r]^-{\widetilde{\iota_j \circ \Gnij{\mathbf{G}}{\iuv{j}{i}}^{\op}}}  \ar[d]_{\langle \Gnij{\mathbf{G}}{\iuv{i}{j}}, \Gnij{\mathbf{G}}{\tau_{j,i} \circ \iuv{j}{i}} \rangle}&
{\dindi{\iota}{Q}{\mathbf{G}}}_i\left(\Gnij{\mathbf{G}}{\iuv{i}{j}}^{\op}(\mathbf{G}(i,j))\right)
 \\
 \mathbf{G}(i) \times_{L} \mathbf{G}(j)
 \ar[r]_-{\widetilde{\iota_i} \times \widetilde{\iota_j}} & \iota_i(\mathbf{G}(i)) \cap \iota_j(\mathbf{G}(j)) \ar@{=}[u]
}
\]
All horizontal arrows are isomorphisms by assumption. \\
Therefore, the vertical map \(\langle \Gnij{\mathbf{G}}{\iuv{i}{j}}, \Gnij{\mathbf{G}}{\tau_{j,i} \circ \iuv{j}{i}} \rangle\) is also an isomorphism, and thus \(\mathbf{G}\) is a strong split effective gluing functor.

\noindent \textsf{(4) $\Rightarrow$ (1)} \quad follows from Lemma \ref{strongsplit}.
\end{proof}

\begin{figure}[H]
	\begin{tikzpicture}[fill=gray, scale=0.8]
		
		\path [draw,right hook->](6.3,5.8) to [bend right=30]  node[midway, above left] {\scalebox{0.6}{$\Gnij{\mbf{G}}{\iuv{1}{2}}$}} (2.5,2.3) ;
		\path [draw,left hook->](-4,-1.5) to [bend left=30] node[midway, above left] {\scalebox{0.6}{$\Gnij{\mbf{G}}{\iuv{1}{3}}$}}  (0,1);
		
		\path [draw,right hook->](-3.4,-3) to [bend right=30] node[midway, below left] {\scalebox{0.6}{$\Gnij{\mbf{G}}{\iuv{3}{1}}$}}  (0.1,-5.5);
		\path [draw,left hook->] (10,-9.1) to [bend left=30] node[midway, below left] {\scalebox{0.6}{$\Gnij{\mbf{G}}{\iuv{3}{2}}$}}  (2,-7.4);
		
		\path [draw,right hook->] (11.1,-8) to [bend right=30] node[midway, below right] {\scalebox{0.6}{$\Gnij{\mbf{G}}{\iuv{2}{3}}$}}  (11.5,-1.1);
		
		\path [draw,left hook->](7.5,6) to [bend left=30]  node[midway, above right] {\scalebox{0.6}{$\Gnij{\mbf{G}}{\iuv{2}{1}}$}}  (10,1.4);

		\path [draw,dashed,thick,->](0.2,-5) to [bend left =30]  node[midway, above left] {\scalebox{0.6}{$\varphi_{31}$}}  (0.5,0.2);
		\path [draw,dotted](0.5,0.2) to [bend left =20]  (3,1.5);
		\path [draw,dashed,thick,->](3,1.5) to [bend left =30]  node[midway, above left] {\scalebox{0.6}{$\varphi_{12}$}}  (9,0.4);
		\path [draw,dotted](9,0.4) to [bend left =20]  (10.7,-1.5);
		\path [draw,dashed,thick,->](10.7,-1.5) to [bend left =60] node[midway, below right] {\scalebox{0.6}{$\varphi_{23}$}}   (2.9,-6.8);
		\path [draw,dotted](2.9,-6.8) to [bend left =20]  (0.2,-5);
		
		\path [draw,dashed](2,-0.6) to node[ above ] {\scalebox{0.6}{$\subsm{\iota_{1}|}{U_{1,2}\cap U_{1,3}}$}}   (5,-1.3);
		
		\path [draw,dashed](1.5,1.5) to  (2,-0.6);
		
		\path [draw,dashed] (5,-1.3) to (6,-1.5);
		
		\path [draw,dashed](6,-1.5) to  (6,-3.4);
		
		\path [draw,dashed] (3.2,-5) to node[below right] {\scalebox{0.6}{$\subsm{\iota_{2}|}{U_{3,1}\cap U_{3,2}}$}}   (4.3,-4);
		
		\path [draw,dashed] (1.5,-6) to  (2.5,-5);
		
		\path [draw,dashed] (2.5,-5) to (3.2,-5);
		
		\path [draw,dashed] (4.3,-4) to (6,-3);
		
		\path [draw,dashed] (9.5,-1.7) to node[midway, below right] {\scalebox{0.6}{$\subsm{\iota_{3}|}{U_{2,1}\cap U_{2,3}}$}}  (7.79,-4);
		
		\path [draw,dashed] (10.5,0) to (9,-1);
		
		\path [draw,dashed]  (9,-1) to (9.5,-1.7) ;
		
		\path [draw,dashed]   (7.79,-4) to   (6,-3);
		
		\path [draw,->]   (1.5,-0.7) to [bend right =30]  node[below ] {\scalebox{0.6}{$\iota_1$}}  (4.54,-3) ;
		\path [draw,->]   (8.57,0.5) to [bend right =30]  node[midway, above ] {\scalebox{0.6}{$\iota_2$}}  (7,-1.85);
		\path [draw,->]   (3.4,-6.7) to [bend right =30]  node[midway, below ] {\scalebox{0.6}{$\iota_3$}}  (6,-4.69) ;
		
		\draw (1,1) circle (1) (0.5,1.5)  node [text=black,below] {{\scalebox{0.6}{ $U_{1,3}$}}}
		(2,1.5) circle (1) (0.5,2.7)  node [text=black,below] {{\scalebox{0.6}{ $U_1$}}}
		(1.5,1.3) circle (2) (2,1.7) node [text=black,above] {\scalebox{0.6}{ $U_{1,2}$}};
		
		\draw (6,-3.7) circle (1) (6,-4)  node [text=black,below] {\scalebox{0.6}{$\iota_3(U_{3})$}}
		(5.5,-2.7) circle (1) (7,-2)  node [text=black,below] {\scalebox{0.6}{$\iota_2(U_{2})$}}
		(6.5,-2.7) circle (1) (5,-2)  node [text=black,below] {\scalebox{0.6}{$\iota_1(U_{1})$}};
		
		\draw (10,0.4) circle (1) (10,1.2)  node [text=black,below] {\scalebox{0.6}{$U_{2,1}$}}
		(10.7,-0.5) circle (1) (11,-0.6)  node [text=black,below] {\scalebox{0.6}{$U_{2,3}$}}
		(10.5,0) circle (2) (11.5,1) node [text=black,above] {\scalebox{0.6}{$U_2$}};
		
		\draw (1.1,-5.5) circle (1) (1,-4.7)  node [text=black,below] {\scalebox{0.6}{ $U_{3,1}$}}
		(2,-6.4) circle (1) (2.1,-6.7)  node [text=black,below] {\scalebox{0.6}{$U_{3,2}$}}
		(1.5,-6) circle (2) (2.5,-5) node [text=black,above] {\scalebox{0.6}{$U_3$}};

		\draw (-3,-2) circle (1) (-3,-2)  node [text=black,below] {\scalebox{0.6}{$\mathbf{G}{(3,1)}$}};

		\draw (7,5) circle (1) (7,5)  node [text=black,below] {\scalebox{0.6}{$\mathbf{G}{(1,2)}$}};

		\draw (10,-8) circle (1) (10,-8)  node [text=black,below] {\scalebox{0.6}{$\mathbf{G}{(2,3)}$}};

	\end{tikzpicture}
	\caption{Representation of the process of  an effective gluing of three topological spaces in $\mathbb{Top}^{\operatorname{op}}$}  
\end{figure}
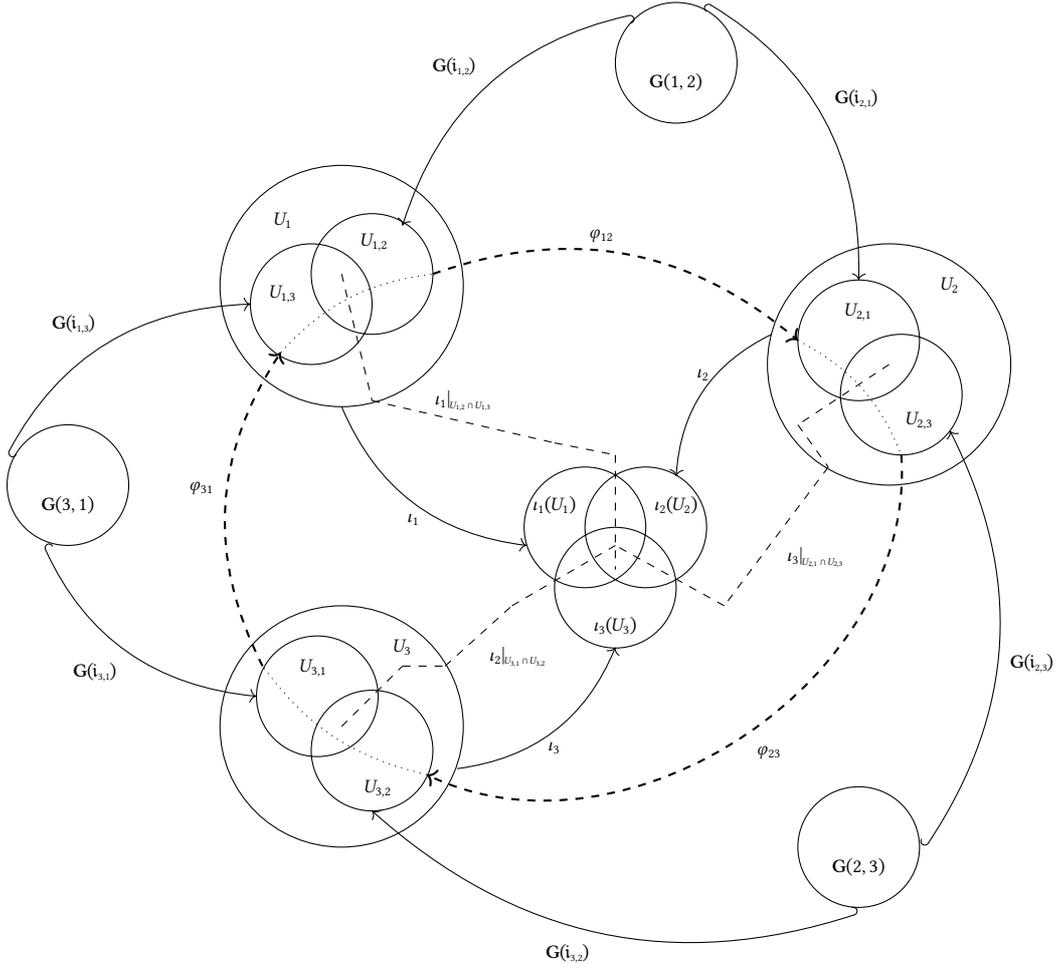

\subsection{Effective Grothendieck site}From this point onward, we assume that \(\mathbb{C}\) is a locally small category for simplicity. We now recall the notions of effective epimorphisms and universal effective epimorphisms.

\begin{definition}\cite[Example (0.3)]{artin}
Let $(U,\subsm{(U_i, \iota_i)}{i\!\in \! \mathrm{I}})$ be a sink in $\mathbb{C}$. We set $\iota:=\subsm{(\iota_i)}{i\!\in \mathrm{I}}$. 

\begin{enumerate} 
\item Let $Z\in \mathbb{C}$. We define the maps 
\begin{itemize} 
\item $\subsm{d}{0}^{Z,\iota}:=\subsm{\langle \iota_i^\ast \rangle}{i\!\in \!\mathrm{I}}:{\textsf{Hom}(U,Z)}\rightarrow {\subsm{\prod}{i\!\in\! \mathrm{I}} \textsf{Hom}(\subsm{U}{i},Z)}$, 
\item $\subsm{d}{1}^{Z,\iota}$ from ${\subsm{\prod}{i\!\in\! \mathrm{I}} \textsf{Hom}(\subsm{U}{i},Z)}$ to ${\subsm{\prod}{(i,j)\! \in\! \mathrm{I}^2 }^{ }\textsf{Hom}(\subsm{U}{i}\subsm{\times}{U} \subsm{U}{j},Z)}$ sending $\subsm{(f_i)}{i\!\in\! \mathrm{I}} $ to $\subsm{(f_i\circ \proj{1}{\subsm{U}{i}\subsm{\times}{\scalebox{0.7}{$U$}} \subsm{U}{j}})}{(i,j)\! \in \!\mathrm{I}^2}$, and 
\item $\subsm{d}{2}^{Z,\iota}$ from ${\subsm{\prod}{i\!\in\! \mathrm{I}} \textsf{Hom}(\subsm{U}{i},Z)}$ to ${\subsm{\prod}{(i,j)\! \in\! \mathrm{I}^2 }^{ }\textsf{Hom}(\subsm{U}{i}\subsm{\times}{U} \subsm{U}{j},Z)}$ sending $\subsm{(f_i)}{i\!\in\! \mathrm{I}} $ to $\subsm{(f_i\circ \proj{2}{\subsm{U}{i}\subsm{\times}{\scalebox{0.7}{$U$}} \subsm{U}{j}})}{(i,j)\! \in \!\mathrm{I}^2}$. 
\end{itemize}
\item We say that $\iota$ is an {\sf effective epimorphism} if the diagram
\begin{center}
{\footnotesize \begin{tikzcd}[column sep=normal]
 {\textsf{Hom}(U,Z)} \arrow{rr}{\subsm{d}{0}^{Z,\iota}}                            &  & {\subsm{\prod}{i\!\in\! \mathrm{I}} \textsf{Hom}(\subsm{U}{i},Z)} \arrow[shift left]{rr}{\subsm{d}{1}^{Z,\iota}} \arrow[shift right,swap]{rr}{\subsm{d}{2}^{Z,\iota}} &  & {\subsm{\prod}{(i,j)\! \in\! \mathrm{I}^2 }^{ }\textsf{Hom}(\subsm{U}{i}\subsm{\times}{U} \subsm{U}{j},Z)} \end{tikzcd}}
\end{center} 
\noindent is an equalizer diagram in the category of sets, for all $Z \in \mathbb{C}$.
\item We say that  $\iota$ is a {\sf universal effective epimorphism} if for all $(V,f) \in {(\mathbb{C} \! \downarrow \! U)}$,  $\subsm{\left( \proj{2}{U_i \subsm{\times}{\scalebox{0.7}{$U$}} V}\right) }{i\!\in \! \mathrm{I}}$ is effective epimorphism. 
\end{enumerate}
\end{definition}



\begin{remark}\label{rem38}
Let \(f: V \rightarrow U\) be a morphism in $\mathbb{C}$ and that $V\times_U V$ exist in $\mathbb{C}$.
\begin{enumerate}
    \item A singleton family \(\{f\}\) is an effective epimorphism if and only if \(f\) is an effective epimorphism. That is, the following diagram is a coequalizer diagram:
    \begin{center}
    \small
    \begin{tikzcd}
        V \times_U V \arrow[shift left]{rr}{\pi_2^{ V \times_U V} } \arrow[shift right,swap]{rr}{\pi_1^{ V \times_U V} } & & V \arrow{rr}{f} && U
    \end{tikzcd}
    \end{center}
    In particular, any isomorphism is an effective epimorphism.
    
    \item A singleton family \(\{f\}\) is a universal effective epimorphism if and only if \(f\) is a descent morphism (see \cite[Definition 1.5]{janelidze1994facets}).
\end{enumerate}
\end{remark}

\noindent In the following proposition, we show that a morphism is an effective epimorphism if and only if the canonical gluing functor associated with the sink defined by this morphism is a split effective gluing functor. This result justifies the terminology "effective gluing functor" used in this context.

\begin{proposition}\label{lemm39}
	Let \(\EuScript{U} = (U, \subsm{(U_i, \iota_i)}{i\! \in\! \mathrm{I}})\) be a sink in \(\mathbb{C}\). The following assertions are equivalent:
	\begin{enumerate}
		\item The family $\subsm{(\iota_i)}{i\!\in \! \mathrm{I}}$ is an effective epimorphism (resp. universal effective epimorphism).
			\item $U$ is a glued-up object over $\gcov{\EuScript{U}}$ through $\subsm{(\iota_i)}{i\!\in \! \mathrm{ I}}$ (resp. $U$ is a universal glued-up object over $\gcov{\EuScript{U}}$ through  through $\subsm{(\iota_i)}{i\!\in \! \mathrm{ I}}$).
			\item $\gcov{\EuScript{U}}$ is a effective split $\mathbb{C}^\op$-gluing functor (resp. $\gcov{\EuScript{U}}$ is a universal effective split $\mathbb{C}^\op$-gluing functor).
		\item $\emph{\textsf{Hom}}(U,-)$ is a glued-up object over $\emph{\textsf{Hom}}(\gcov{\EuScript{U}})$ through $\subsm{\left({\iota_i^\ast} \right)}{i\!\in\! \mathrm{I}}$ (resp.  $\emph{\textsf{Hom}}(U,-)$ is a universal glued-up object over $\emph{\textsf{Hom}}(\gcov{\EuScript{U}})$ through $\subsm{\left({\iota_i^\ast} \right)}{i\!\in\! \mathrm{I}}$).  
	\end{enumerate}
\end{proposition}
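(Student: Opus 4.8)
The strategy is to prove the chain of equivalences by establishing a cycle, relying on the results already available in the excerpt. First, the equivalence (2) $\Leftrightarrow$ (4) is essentially immediate: by Proposition~\ref{HomG} applied to the functor $\gcov{\EuScript{U}} : \mathbb{S}_2(\mathrm{I}) \to \mathbb{C}^\op$ (more precisely, its composite $\gcov{\EuScript{U}} \circ \mathbf{A}_c'$ with a pair sorting map, so that the result for split gluing functors applies as noted in the remark after Definition~\ref{gluingfunctor}), the sink $U$ is a glued-up object over $\gcov{\EuScript{U}}$ through $\subsm{(\iota_i)}{i\in\mathrm{I}}$ if and only if $\textsf{Hom}(U,-)$ is a glued-up object over $\textsf{Hom}(\gcov{\EuScript{U}})$ through $\subsm{(\iota_i^\ast)}{i\in\mathrm{I}}$. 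The universal versions follow by the same argument applied to each base change $\gcov{\EuScript{U}_V}$.

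Next I would prove (1) $\Leftrightarrow$ (4). This is where the definition of effective epimorphism is unwound. Spelling out what it means for $\textsf{Hom}(U,-)$ to be a cone over $\textsf{Hom}(\gcov{\EuScript{U}})$ that is terminal: a cone over $\textsf{Hom}(\gcov{\EuScript{U}})$ evaluated at an object $Z$ amounts to a family $(f_i)_{i\in\mathrm{I}}$ with $f_i \in \textsf{Hom}(U_i, Z)$ compatible along the transition maps $\gcov{\EuScript{U}}(\iuv{i}{j})^\op = \proj{1}{U_i\times_U U_j}$ and $\gcov{\EuScript{U}}(\tau_{j,i}\circ\iuv{j}{i})^\op = \proj{2}{U_i\times_U U_j}$, i.e. $f_i \circ \proj{1}{U_i\times_U U_j} = f_j \circ \proj{2}{U_i\times_U U_j}$ for all $(i,j)$. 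This is exactly the equalizer condition defining $\subsm{d}{1}^{Z,\iota}$ and $\subsm{d}{2}^{Z,\iota}$. Terminality of the cone $\textsf{Hom}(U,-)$ then says precisely that the map $\subsm{d}{0}^{Z,\iota} = \subsm{\langle \iota_i^\ast\rangle}{i\in\mathrm{I}}$ identifies $\textsf{Hom}(U,Z)$ with this equalizer, for every $Z$ — which is the definition of $\iota$ being an effective epimorphism. The universal variant is handled by running the same identification over the localized site: for $(V,f)\in(\mathbb{C}\downarrow U)$, the base change $\gcov{\EuScript{U}_V}$ is precisely the canonical functor of the sink $\big(V,(U_i\times_U V, \proj{2}{U_i\times_U V})_{i\in\mathrm{I}}\big)$, so $U$ being a universal glued-up object over $\gcov{\EuScript{U}}$ translates, via the non-universal equivalence applied to each such base-changed sink, into $\subsm{(\proj{2}{U_i\times_U V})}{i\in\mathrm{I}}$ being an effective epimorphism for all $(V,f)$, i.e. $\iota$ being a universal effective epimorphism.

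Finally, (2) $\Leftrightarrow$ (3). One direction, (3) $\Rightarrow$ (2), is the content of — or rather a mild strengthening of the argument behind — the fact that an effective split gluing functor has a glued-up object with the expected universal property; conversely (2) $\Rightarrow$ (3) needs us to verify that when $U$ is a glued-up object over $\gcov{\EuScript{U}}$, the functor $\gcov{\EuScript{U}}$ satisfies the four clauses of Definition~\ref{cocycle}. Clauses (1)–(3) (the cocycle and its symmetry/coherence) hold automatically because $\gcov{\EuScript{U}}$ is built from pullbacks: the required isomorphisms $\widetilde{\varphi_{i,j}}^k$ between the iterated fibre products $\mathbf{G}(j,i)\times_{\mathbf{G}(j)}\mathbf{G}(j,k)$ and $\mathbf{G}(i,j)\times_{\mathbf{G}(i)}\mathbf{G}(i,k)$ are the canonical comparison isomorphisms among triple fibre products $U_i\times_U U_j\times_U U_k$, which satisfy the cocycle identities by the uniqueness in the universal property of pullbacks. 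Clause (4), that $\gcov{\EuScript{U}}(\iuv{i}{j})^\op = \proj{1}{U_i\times_U U_j}$ is a regular epimorphism — here I would invoke the observation recalled just before Lemma~\ref{gluthru} that in a category with pullbacks this is the relevant strictness condition, and derive it from the fact that being a glued-up object over $\gcov{\EuScript{U}}$ forces each $\iota_i$, hence each projection, to be appropriately epimorphic via the equalizer description of Proposition~\ref{equalizer} combined with the $\textsf{Hom}$ characterization. For the universal versions throughout, I would apply the same reasoning to every base change $\gcov{\EuScript{U}_V}$.

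**Main obstacle.** The routine part is the $\textsf{Hom}$-to-equalizer bookkeeping in (1) $\Leftrightarrow$ (4); the delicate point is (2) $\Rightarrow$ (3), specifically verifying clause (4) of Definition~\ref{cocycle} — that the pullback projections are regular (equivalently, strict) epimorphisms — purely from the hypothesis that $U$ is the glued-up object over $\gcov{\EuScript{U}}$. This requires carefully reconciling "terminal cone over the pullback diagram" with "the family $(\iota_i)$ is a (regular/effective) epimorphic family," and making sure the split-versus-non-split passage (via the functors $\mathbf{A}_c$, $\mathbf{A}_c'$) does not lose information. I expect to route this through Proposition~\ref{lemm39}'s own equivalence (1) $\Leftrightarrow$ (2) once it is established, so the architecture is to first nail down (1) $\Leftrightarrow$ (2) $\Leftrightarrow$ (4) cleanly and then bootstrap (3) into the cycle using Lemma~\ref{strongsplit} and Proposition~\ref{eqrel} as templates for the cocycle verification.
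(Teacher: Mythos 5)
Your cycle $(1)\Leftrightarrow(4)\Leftrightarrow(2)$ is sound and is essentially the paper's own argument reordered: the paper proves $(1)\Leftrightarrow(2)$ directly (one direction by feeding a test cone into the equalizer diagram via a one-point set, the other by extracting the equalizer property elementwise over a test set $T$ from terminality of the cone), and only then invokes Proposition~\ref{HomG} to pass between $(3)$ and $(4)$; you invoke Proposition~\ref{HomG} between $(2)$ and $(4)$ and unwind the effective-epimorphism definition against $(4)$ instead of against $(2)$. The content is identical and nothing is lost in your reordering, including the treatment of the universal variants by base change along $(V,f)\in(\mathbb{C}\downarrow U)$.

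The gap is exactly where you feared, in $(2)\Rightarrow(3)$. The paper disposes of $(2)\Leftrightarrow(3)$ with the words ``by definition,'' i.e.\ it never verifies the clauses of Definition~\ref{cocycle}; you correctly notice that this definition demands more than existence of the limit, in particular clause (4), that $\gcov{\EuScript{U}}(\iuv{i}{j})^{\op}=\proj{1}{U_i \times_U U_j}$ be a regular epimorphism. Your proposed derivation of that clause (``being a glued-up object forces each projection to be appropriately epimorphic'') does not go through: the first projection $U_i\times_U U_j\to U_i$ need not be an epimorphism of any kind even when $U$ is glued from the $U_i$. Take $\mathbb{C}=\mathbb{Top}$ and $U=U_1\sqcup U_2$ a disjoint union of two nonempty opens; then $(\iota_1,\iota_2)$ is an effective epimorphism and $U$ is the glued-up object, yet $U_1\times_U U_2=\emptyset$ and the projection to $U_1$ is the empty map, which is not a (regular) epimorphism. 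So assertions $(1)$, $(2)$, $(4)$ hold while clause (4) of Definition~\ref{cocycle} fails, and $(2)\Rightarrow(3)$ cannot be proved as you sketch it: either item $(3)$ is false as literally stated, or it must be read --- as the paper's one-line proof implicitly does --- as saying only that $\gcov{\EuScript{U}}$ is a split gluing functor with limit $U$. The honest fix is to prove the cocycle and coherence clauses (which, as you say, do follow from uniqueness in the universal property of iterated pullbacks) and to either drop clause (4) from the claim or restrict to sinks for which it is assumed.
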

\begin{proof}
We set $\iota:=\subsm{(\iota_i)}{i\!\in \! \mathrm{I}}$ and let $Z\in \mathbb{C}$.
\begin{enumerate}
	\item[$(1)\Rightarrow (2)$] Suppose that \(\iota\) is an effective epimorphism. We prove that \((U, \subsm{(U_i, \iota_i)}{i\! \in\! \mathrm{I}})\) is a terminal cone over \(\gcov{\EuScript{U}}\). By taking \(Z = U\) and \(\operatorname{id} \in \textsf{Hom}(U, U)\) in the diagram of Definition \ref{gluingcover} (2), we deduce that \((U, \subsm{(U_i, \iota_i)}{i\! \in\! \mathrm{I}})\) is a cone over \(\gcov{\EuScript{U}}\).

\noindent Let \((Z, \subsm{(Z_a, \iota_{Z_a})}{a\! \in\! \mathbb{P}_2(\mathrm{I})})\) be a cone over \(\gcov{\EuScript{U}}\). We prove that there exists \(\varphi: U \rightarrow Z\) making the following diagram commute.

	\begin{figure}[H]
\begin{center}
{\tiny\begin{tikzcd}[column sep =normal]
                                                                                                                                                          &                                                                                     \\
                                                                                &Z                                 &                                                                                     \\
                                                                                &  U    \arrow[swap, dashed]{u}{\exists ! \varphi}  &                                                                                     \\
U_i \arrow{ru}{\iota_i} \arrow[bend left]{ruu}{\dindsi{\iota}{Z}{i}} &                                                                              & U_j \arrow[swap]{lu}{\iota_j} \arrow[swap,bend right]{luu}{\dindsi{\iota}{Z}{j}}  \\
                                                                                & U_i\subsm{\times}{U} U_j \arrow{lu}{\proj{1}{\subsm{U}{i}\subsm{\times}{\scalebox{0.7}{$U$}} \subsm{U}{j}}} \arrow[swap]{ru}{\proj{2}{\subsm{U}{i}\subsm{\times}{\scalebox{0.7}{$U$}} \subsm{U}{j}}}            &                                                                                                                                                         
\end{tikzcd}}
\end{center}   
\caption{}\label{topoul1130}
\end{figure}
\noindent Let \(\mathbf{1}\) be a one-element set. 
By the universal property of equalizers, there exists a unique morphism \(\mu: \mathbf{1} \rightarrow \textsf{Hom}(U, Z)\) making the following diagram commute:
\begin{figure}[H] 
\begin{center}
{\footnotesize \begin{tikzcd}[column sep=large] {\textsf{Hom}(U,Z)} \arrow{r} {\subsm{d}{0}^{Z,\iota}} & {\subsm{\prod}{i\!\in\! \mathrm{I}} \textsf{Hom}(\subsm{U}{i},Z)}  \arrow[shift left] {r}{\subsm{d}{1}^{Z,\iota}} \arrow[shift right,swap]{r}{\subsm{d}{2}^{Z,\iota}} & {\subsm{\prod}{(i,j)\! \in\! \mathrm{I}^2 }^{ }\textsf{Hom}(\subsm{U}{i}\subsm{\times}{U} \subsm{U}{j},Z)}\\ \mathbf{1}  \arrow[dashed]{u}{\exists ! \mu} \arrow[ swap]{ur}{\subsm{\langle e_{\dindsi{\iota}{Z}{i}} \rangle}{i\! \in\! \mathrm{I}} } \end{tikzcd}}
\end{center}  
\end{figure}
\noindent where $e_{\dindsi{\iota}{Z}{i}} : \mathbf{1}  \rightarrow \textsf{Hom}(\subsm{U}{i},Z)$ denotes the map sending the unique element of $\mathbf{1}$ to $\dindsi{\iota}{Z}{i}$.
\noindent Therefore, this morphism $\mu$ send \(\mathbf{1}\) to \(\varphi\) making the diagram in Figure \ref{topoul1130} commute, the uniqueness of $\mu$ guarantees the uniqueness of \(\varphi\).

\item[$(2)\Rightarrow (1)$] Suppose that \(U\) is a glued-up object over \(\gcov{\EuScript{U}}\) through \(\subsm{(\iota_i)}{i\! \in\! \mathrm{I}}\). By the definition of a glued-up object, we know that \(\subsm{d}{0}^{Z, \iota}\) equalizes \(\subsm{d}{1}^{Z, \iota}\) and \(\subsm{d}{2}^{Z, \iota}\). 

\noindent Now let \(\varphi: T \rightarrow \subsm{\prod}{i\! \in \!\mathrm{I}} \textsf{Hom}(U_i, Z)\) be a set morphism equalizing \(\subsm{d}{1}^{Z, \iota}\) and \(\subsm{d}{2}^{Z, \iota}\). We want to prove that there exists a unique morphism \(\psi: T \rightarrow \textsf{Hom}(U, Z)\) making the following diagram commute:

\begin{figure}[H] 
\begin{center}
{\footnotesize \begin{tikzcd}[column sep=large] {\textsf{Hom}(U,Z)} \arrow{r} {\subsm{d}{0}^{Z,\iota}} & {\subsm{\prod}{i\!\in\! \mathrm{I}} \textsf{Hom}(\subsm{U}{i},Z)}  \arrow[shift left] {r}{\subsm{d}{1}^{Z,\iota}} \arrow[shift right,swap]{r}{\subsm{d}{2}^{Z,\iota}} & {\subsm{\prod}{(i,j)\! \in\! \mathrm{I}^2 }^{ }\textsf{Hom}(\subsm{U}{i}\subsm{\times}{U} \subsm{U}{j},Z)}\\ T  \arrow[dashed]{u}{\exists ! \psi}  \arrow[ swap]{ur}{\varphi } \end{tikzcd}}
\end{center}   \caption{}\label{topoul1132} 
\end{figure}
\noindent Let \(t \in T\). We write \(\varphi(t) = \subsm{(t_i)}{i\! \in\! \mathrm{I}}\) where \(t_i \in \textsf{Hom}(U_i, Z)\) for all $i \in \mathrm{I}$. We have 
$$t_i \circ \proj{2}{U_i \times_U U_j} = t_j \circ \proj{2}{U_i \times_U U_j},  \text{ for all } i, j \in \mathrm{I}.$$ Thus, by the universal property of the glued-up object, there exists a unique morphism \(f_t \in \textsf{Hom}(U, Z)\) such that \(\subsm{d}{0}^{Z, \iota}(f_t) = \subsm{( t_i)}{i \!\in\! \mathrm{I}}\). Hence, by setting \(\psi(t) := f_t\) for all \(t \in T\), we prove the existence of \(\psi\) making the diagram in Figure \ref{topoul1132} commute et the uniqueness of such a map is guaranteed by the uniqueness of $f_t$ for all $t\in T$.
\item[$(2)\Leftrightarrow (3)$] by definition.
		\item[$(3)\Leftrightarrow (4)$] by Proposition \ref{HomG}.
	\end{enumerate}
\end{proof}


\noindent The following proposition follows directly from Remark \ref{rem38}.

\begin{proposition}
We assume furthermore that \(\mathbb{C}\) admits coproduct, and that pullback commute with coproducts. Let \(\EuScript{U} = (U, \subsm{(U_i, \iota_i)}{i\! \in\! \mathrm{I}})\) be a sink in \(\mathbb{C}\). Then the following assertions are equivalent:
\begin{enumerate}
    \item \(\subsm{(\iota_i)}{i\! \in \!\mathrm{I}}\) is an effective epimorphism.
    \item the following sequence is an equalizer diagram in the category of sets:
    \begin{center}
   \adjustbox{scale=0.8,center}{%
   \begin{tikzcd}[column sep=large]
\emph{\textsf{Hom}}(U, Z) \arrow{rr}{\subsm{\langle \iota_i \rangle}{i\! \in\! \mathrm{I}}^\ast} & & \emph{\textsf{Hom}}\left(\subsm{\coprod}{i\! \in\! \mathrm{I}} U_i, Z\right) \arrow[shift left]{rr}{\subsm{\left\langle {\proj{1}{\subsm{\coprod}{i\! \in\! \mathrm{I}} U_i \times_U \subsm{\coprod}{j\! \in \!\mathrm{J}} U_j}} \right \rangle^\ast }{\!\!(i,j)\!\in \! \mathrm{I}^2}} \arrow[shift right,swap]{rr}{\subsm{\left \langle {\proj{2}{\subsm{\coprod}{i\! \in\! \mathrm{I}} U_i \times_U \subsm{\coprod}{j\! \in \!\mathrm{J}} U_j}} \right \rangle^\ast }{\!\!(i,j)\!\in \! \mathrm{I}^2}} & & \emph{\textsf{Hom}}\left( \subsm{\coprod}{i\! \in\! \mathrm{I}} U_i \times_U  \subsm{\coprod}{j\! \in\! \mathrm{I}} U_j, Z\right),
    \end{tikzcd}}
    \end{center}  
    \item the canonical morphism \(\subsm{\langle \iota_i \rangle}{i\! \in\! \mathrm{I}}\) from \(\subsm{\coprod}{i\! \in\! \mathrm{I}} U_i\) to \(U\) is a descent morphism.
\end{enumerate}
\end{proposition}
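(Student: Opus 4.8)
The plan is to prove the equivalences by the route $(1)\Leftrightarrow(2)\Leftrightarrow(3)$, the point being to translate statements about the family $\iota=(\iota_i)_{i\in\mathrm{I}}$ into statements about the single canonical morphism $\langle\iota_i\rangle_{i\in\mathrm{I}}\colon\coprod_{i\in\mathrm{I}}U_i\to U$, and then to quote Remark~\ref{rem38}. The hypotheses (existence of coproducts, commutation of pullbacks with coproducts) are exactly what is needed to move between the two viewpoints.

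For $(1)\Leftrightarrow(2)$, fix $Z\in\mathbb{C}$. The universal property of the coproduct gives a natural bijection $\textsf{Hom}\big(\coprod_i U_i,Z\big)\xrightarrow{\ \sim\ }\prod_i\textsf{Hom}(U_i,Z)$ under which $\langle\iota_i\rangle_{i\in\mathrm{I}}^{\ast}$ corresponds to $d_0^{Z,\iota}$. Since pullbacks commute with coproducts by hypothesis, $\big(\coprod_i U_i\big)\times_U\big(\coprod_j U_j\big)\cong\coprod_{(i,j)\in\mathrm{I}^2}\big(U_i\times_U U_j\big)$, so applying the coproduct property once more yields $\textsf{Hom}\big((\coprod_i U_i)\times_U(\coprod_j U_j),Z\big)\cong\prod_{(i,j)\in\mathrm{I}^2}\textsf{Hom}(U_i\times_U U_j,Z)$. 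One then checks that, through these two identifications, the two parallel arrows of diagram~(2) — induced by the canonical projections $(\coprod_i U_i)\times_U(\coprod_j U_j)\rightrightarrows\coprod_i U_i$ — are carried exactly onto $d_1^{Z,\iota}$ and $d_2^{Z,\iota}$ respectively. Hence diagram~(2) is isomorphic, as a diagram in $\mathbb{Sets}$, to the fork appearing in the definition of an effective epimorphism for $\iota$; one is an equalizer precisely when the other is, and since this holds naturally in $Z$, we obtain $(1)\Leftrightarrow(2)$.

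For $(2)\Leftrightarrow(3)$, observe that the diagram in~(2) is precisely $\textsf{Hom}(-,Z)$ applied to the fork $\big(\coprod_i U_i\big)\times_U\big(\coprod_i U_i\big)\rightrightarrows\coprod_i U_i\xrightarrow{\langle\iota_i\rangle_{i\in\mathrm{I}}}U$. By the standard fact that a fork is a coequalizer exactly when $\textsf{Hom}(-,Z)$ of it is an equalizer of sets for every $Z$, requiring~(2) for all $Z$ is the same as asking the singleton family $\{\langle\iota_i\rangle_{i\in\mathrm{I}}\}$ to be an effective epimorphism in the sense of the preceding definition. Remark~\ref{rem38}(1) identifies this with $\langle\iota_i\rangle_{i\in\mathrm{I}}$ being an effective epimorphism; and, running the same argument after an arbitrary base change $(V,f)\in(\mathbb{C}\downarrow U)$ — where pullbacks again commute with coproducts — Remark~\ref{rem38}(2) identifies the universal version with $\langle\iota_i\rangle_{i\in\mathrm{I}}$ being a descent morphism, which is assertion~(3).

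I expect the only real obstacle to be the bookkeeping in $(1)\Leftrightarrow(2)$: one must verify scrupulously that the $\mathrm{I}$-indexed and $\mathrm{I}^2$-indexed families produced by the universal properties of $\prod$ and $\coprod$ coincide with those appearing in the definitions of $d_1^{Z,\iota}$ and $d_2^{Z,\iota}$, i.e.\ that the coproduct–pullback interchange isomorphism is simultaneously compatible with both projections $\pi_1,\pi_2$ of each $U_i\times_U U_j$. Everything else is a formal consequence of Remark~\ref{rem38} together with the Yoneda-type characterization of (co)equalizers.
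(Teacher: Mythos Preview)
Your argument for $(1)\Leftrightarrow(2)$ is correct and is exactly what the paper has in mind: the paper's own proof is the single sentence ``follows directly from Remark~\ref{rem38}'', and your unpacking via the coproduct bijection $\textsf{Hom}(\coprod_i U_i,Z)\cong\prod_i\textsf{Hom}(U_i,Z)$ together with the pullback--coproduct interchange $(\coprod_i U_i)\times_U(\coprod_j U_j)\cong\coprod_{(i,j)}U_i\times_U U_j$ is the natural way to make that precise.

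There is, however, a real gap in your passage from $(2)$ to $(3)$. Condition~(2), read for all $Z$, says exactly that $\langle\iota_i\rangle_{i\in\mathrm{I}}$ is an \emph{effective} epimorphism; condition~(3), via Remark~\ref{rem38}(2), asks that it be a \emph{universal} effective epimorphism (a descent morphism). Your sentence ``running the same argument after an arbitrary base change'' does not bridge this: nothing in~(2) asserts that the equalizer property survives pullback along an arbitrary $V\to U$, and there is no general principle guaranteeing that effective epimorphisms are pullback-stable. The same mismatch is already present on the family side, since~(1) is not the universal version either. Thus $(1)\Leftrightarrow(2)$ holds, and each is equivalent to ``$\langle\iota_i\rangle$ is an effective epimorphism'', but neither implies~(3) without a further stability hypothesis. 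The paper's one-line proof does not address this point, so the discrepancy appears to lie in the formulation of the proposition rather than in your strategy; but as the statement stands, your $(2)\Rightarrow(3)$ step is unjustified.
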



\noindent The following lemma shows that the composition of universal effective epimorphisms is again a universal effective epimorphism.

\begin{lemma}\label{compoeff} Let $(U,\subsm{(U_i,\iota_i)}{i\!\in \! \mathrm{I}})$ and $(U_i, \subsm{(\subsm{V}{ij}, \ell_{ij})}{j\!\in \! \mathrm{J}_i})$ be sinks in $\mathbb{C}$. If $\subsm{(\iota_i)}{i\!\in \! \mathrm{I}}$ and $\subsm{(\ell_{ij})}{j\!\in \! \mathrm{J}_i}$ are universal effective epimorphisms, then $\subsm{(\iota_i\circ \ell_{ij})}{(i,j)\!\in\! \mathrm{L} }$ is a universal effective epimorphism where $\mathrm{L}=\big\{(i,j)\;| \;i\in \mathrm{I}, j\in \mathrm{J}_i\big\}$.
\end{lemma}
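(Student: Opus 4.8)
The plan is to first establish pullback-stability for universal effective epimorphisms, then use it to reduce the universal statement to a non-universal composability statement, which is proved by the defining equalizer characterization of effective epimorphisms.

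\textbf{Step 1 (base change stability).} I first record the family version of the classical fact that descent morphisms are pullback stable (cf. \cite{janelidze1994facets} and Remark~\ref{rem38}): if $\subsm{(\iota_i)}{i\in\mathrm{I}}$ is a universal effective epimorphism of a sink to $U$ and $f\colon W\to U$ is any morphism, then $\subsm{(\proj{2}{U_i\times_U W})}{i\in\mathrm{I}}$ is a universal effective epimorphism of the induced sink to $W$. This is immediate from the definition together with the pasting law for pullbacks, which gives $(U_i\times_U W)\times_W T\cong U_i\times_U T$ for any $T\to W\to U$.

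\textbf{Step 2 (reduction).} Fix $(W,g)\in(\mathbb{C}\downarrow U)$. Base change the sink $(U,\subsm{(U_i,\iota_i)}{i\in\mathrm{I}})$ along $g$ to get $(W,\subsm{(U_i\times_U W,q_i)}{i\in\mathrm{I}})$ with $q_i=\proj{2}{U_i\times_U W}$; by hypothesis $\subsm{(q_i)}{i\in\mathrm{I}}$ is an effective epimorphism. For each $i$, base change $(U_i,\subsm{(V_{ij},\ell_{ij})}{j\in\mathrm{J}_i})$ along $\proj{1}{U_i\times_U W}$; by Step~1 the resulting family is universal effective, and by the pasting law its members are canonically $V_{ij}\times_U W\to U_i\times_U W$ with $q_i$-composite equal to $\proj{2}{V_{ij}\times_U W}$. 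Thus it suffices to prove the \emph{Claim}: if $\subsm{(\iota_i)}{i\in\mathrm{I}}$ is an effective epimorphism and each $\subsm{(\ell_{ij})}{j\in\mathrm{J}_i}$ is a universal effective epimorphism, then $\subsm{(\iota_i\circ\ell_{ij})}{(i,j)\in\mathrm{L}}$ is an effective epimorphism; applying the Claim to the base-changed data then shows $\subsm{(\proj{2}{V_{ij}\times_U W})}{(i,j)\in\mathrm{L}}$ is an effective epimorphism for every $(W,g)$, i.e.\ $\subsm{(\iota_i\circ\ell_{ij})}{(i,j)\in\mathrm{L}}$ is universal effective.

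\textbf{Step 3 (the Claim).} Fix $Z\in\mathbb{C}$ and examine the defining equalizer diagram for $\subsm{(\iota_i\circ\ell_{ij})}{(i,j)\in\mathrm{L}}$. Injectivity of $d_0$ holds because effective epimorphisms are jointly epimorphic: two morphisms out of $U$ agreeing after precomposition with every $\iota_i\circ\ell_{ij}$ agree after precomposition with every $\iota_i$ (each $\subsm{(\ell_{ij})}{j}$ being jointly epi), hence agree (as $\subsm{(\iota_i)}{i}$ is jointly epi). For the equalizer property, let $\subsm{(g_{ij})}{(i,j)\in\mathrm{L}}$ satisfy $g_{ij}\circ\proj{1}{V_{ij}\times_U V_{i'j'}}=g_{i'j'}\circ\proj{2}{V_{ij}\times_U V_{i'j'}}$ for all $(i,j),(i',j')\in\mathrm{L}$. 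Restricting this along the canonical map $V_{ij}\times_{U_i}V_{ij'}\to V_{ij}\times_U V_{ij'}$ shows $\subsm{(g_{ij})}{j\in\mathrm{J}_i}$ is compatible over $U_i$, so effectiveness of $\subsm{(\ell_{ij})}{j}$ yields a unique $f_i\colon U_i\to Z$ with $f_i\circ\ell_{ij}=g_{ij}$. One then checks $\subsm{(f_i)}{i\in\mathrm{I}}$ is compatible over $U$: since $\subsm{(\ell_{ij})}{j}$ is universal effective, the base-changed family $\subsm{(V_{ij}\times_U U_{i'}\to U_i\times_U U_{i'})}{j}$ is jointly epi, so it suffices to check $f_i\circ\proj{1}{U_i\times_U U_{i'}}=f_{i'}\circ\proj{2}{U_i\times_U U_{i'}}$ after precomposing with each of these; the identities defining $f_i$ turn this into an equality of two morphisms on $V_{ij}\times_U U_{i'}$, and applying the same device with the universal effective family $\subsm{(\ell_{i'j'})}{j'}$ base changed to $U_{i'}$ reduces it to the hypothesis on $\subsm{(g_{ij})}{}$ on $V_{ij}\times_U V_{i'j'}$. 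By effectiveness of $\subsm{(\iota_i)}{i}$ there is $f\colon U\to Z$ with $f\circ\iota_i=f_i$, hence $f\circ\iota_i\circ\ell_{ij}=g_{ij}$; uniqueness of $f$ is the injectivity just shown. This proves the Claim, and with Step~2 the lemma.

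\textbf{Main obstacle.} The delicate point is the compatibility of the intermediate family $\subsm{(f_i)}{i\in\mathrm{I}}$ over $U$: it requires unwinding a two-stage tower of fibered products and invoking, at each stage, the pullback-stable joint epimorphicity that is precisely the extra content of \emph{universal} effectiveness of the $\subsm{(\ell_{ij})}{j}$ (mere effectiveness would not suffice here). The various iterated fibered-product identifications are routine instances of the pasting law and their verification should cost only a line or two.
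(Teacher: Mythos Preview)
Your proof is correct and follows essentially the same two-stage strategy as the paper: first glue the $g_{ij}$ to morphisms $f_i\colon U_i\to Z$ using effectiveness of the $\ell$-families, then verify compatibility of the $f_i$ over $U$ using precisely the universal (pullback-stable) effectiveness of the $\ell$'s, and finally glue to $f\colon U\to Z$. The only difference is packaging: the paper carries the base change to an arbitrary $V$ through the whole argument and encodes your joint-epimorphicity steps as the injectivity of an explicitly constructed map $r$ between products of Hom-sets, whereas you isolate base-change stability as a preliminary (Step~1) and argue the compatibility of $(f_i)$ element-wise; the logical content is identical.
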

\begin{proof} 
Let $(V,f)\in (\mathbb{C} \downarrow U)$ and $i \in \mathrm{ I}$. 
	 We set \(\eta := \subsm{\left( \proj{2}{U_i \subsm{\times}{\scalebox{0.7}{$U$}} V}\right) }{i\!\in \! \mathrm{I}}\), \(\eta_i := \subsm{( \proj{2}{V_{ij}\subsm{\times}{\scalebox{0.7}{$U_i$}}(U_i \subsm{\times}{\scalebox{0.7}{$U$}} V)})}{j\! \in\! \mathrm{J}_i}\), and \(\kappa := \subsm{( \proj{2}{V_{ij} \subsm{\times}{\scalebox{0.7}{$U$}} V})}{(i,j)\! \in\! \mathrm{L}}\). Consider the figure below:
\begin{figure}[H]
\begin{center}
{\tiny\begin{tikzcd}[column sep=large]
 {\textsf{Hom}(V,Z)} \arrow[thick]{rr}{\subsm{d}{0}^{Z,\eta}}                            &  & {\subsm{\prod}{i\!\in\! \mathrm{I}} \textsf{Hom}(U_i \subsm{\times}{\scalebox{0.7}{$U$}} V,Z)} \arrow[shift left]{rr}{\subsm{d}{1}^{Z,\eta}} \arrow[shift right,swap]{rr}{\subsm{d}{2}^{Z,\eta}}\arrow[thick]{dd}{\subsm{\left(\subsm{d}{0}^{Z,\eta_i}\right)}{i\!\in \! \mathrm{I}}} &  & {\subsm{\prod}{(i,j)\! \in\! \mathrm{I}^2 }^{ }\textsf{Hom}(\subsm{U}{i}\subsm{\times}{U} \subsm{U}{j}\subsm{\times}{U} V,Z)} \arrow{dd} {r}\\ && &&\\ 
 T \arrow[swap]{rr}{f=\subsm{\langle f_i\rangle}{i\!\in \! \mathrm{I}}}     \arrow[dashed]{uu}{\exists ! h}  \arrow[dashed]{rruu}{\exists ! g}                     &  & {\subsm{\prod}{(i,k)\!\in\! \mathrm{L}}\textsf{Hom}(\subsm{V}{ik}\subsm{\times}{U} V,Z)} \arrow[shift left, pos=0.4, outer sep=-2pt]{ddddrr}{\subsm{\left(\subsm{d}{1}^{Z,\eta_i}\right)}{i\!\in \! \mathrm{I}}} \arrow[shift right, swap, pos=0.4, outer sep=-4pt]{ddddrr}{\subsm{\left(\subsm{d}{2}^{Z,\eta_i}\right)}{i\!\in \! \mathrm{I}}} \arrow[shift left, thick]{rr}{\subsm{d}{1}^{Z,\kappa}} \arrow[shift right,swap, thick]{rr}{\subsm{d}{2}^{Z,\kappa}} &  & {\subsm{\prod}{((i,k),(j,l)) \!\in \! \mathrm{L}^2}\textsf{Hom}(\subsm{V}{ik}\subsm{\times}{U} \subsm{V}{jl}\subsm{\times}{U}V,Z)} \arrow[]{dddd}{s}
\\ && && \\ && &&   \\ && && \\ && && {\subsm{\prod}{i\in \rm{I} ,(k,l) \!\in \! \mathrm{J_i}^2}\textsf{Hom}(\subsm{V}{ik}\subsm{\times}{U_i} \subsm{V}{il}\subsm{\times}{U}V,Z)}. 
 \end{tikzcd}}
\end{center} \caption{}\label{topoul1134}
\end{figure}
\noindent where
\begin{equation*}
\begin{array}{rcl}
r: & \subsm{\prod}{(i,j)\! \in\! \mathrm{I}^2} \textsf{Hom}(U_i \times_U U_j\times_U V, Z) & \rightarrow \subsm{\prod}{((i,k),(j,l))\! \in\! \mathrm{L}^2} \textsf{Hom}(V_{ik} \times_U V_{jl}\times_U V, Z) \\
   & \subsm{(f_{ij})}{(i,j)\! \in\! \mathrm{I}^2} & \mapsto \subsm{(\subsm{f}{ij} \circ (\subsm{\ell}{ik} \times \subsm{\ell}{jl}\times \subsm{\operatorname{id}}{V}))}{((i,k),(j,l)) \!\in \! \mathrm{L}^2}.
\end{array}
\end{equation*}
\noindent
We note that \(r\) is the composite of:
\begin{equation*}
\subsm{\left(d_0^{Z,p_{ij}}\right)}{(i,j) \!\in \!\mathrm{I}^2}: \subsm{\prod}{(i,j)\! \in\! \mathrm{I}^2} \textsf{Hom}(U_i \times_U U_j\times_U V, Z) \rightarrow \subsm{\prod}{k \!\in\! \mathrm{J}_i} \subsm{\prod}{(i,j)\! \in\! \mathrm{I}^2} \textsf{Hom}(V_{ik} \times_U U_j\times_UV, Z)
\end{equation*}
where $p_{ij}=\subsm{( \proj{2}{\subsm{V}{ik}\subsm{\times}{\scalebox{0.7}{$U_{i}$}} ((U_{i}\subsm{\times}{U}U_j)\times_UV)})}{k\!\in \! \mathrm{J}_i}$, for all \((i,j) \in \mathrm{I}^2\), and
\begin{equation*}
\subsm{\left(d_0^{Z,q_{ij}^k}\right)}{ k\in J_i,(i,j)\! \in\! \mathrm{I}^2}: \subsm{\prod}{k \!\in \!\mathrm{J}_i} \subsm{\prod}{(i,j)\! \in\! \mathrm{I}^2} \textsf{Hom}(V_{ik} \times_U U_j\times_U V, Z) \rightarrow \subsm{\prod}{((i,k),(j,l))\! \in \!\mathrm{L}^2} \textsf{Hom}(V_{ik} \times_U V_{jl} \times_U V , Z)
\end{equation*}
where $q_{ij}^k=\subsm{( \proj{2}{(\subsm{V}{ik}\subsm{\times}{\scalebox{0.7}{$U$}} U_{j}\subsm{\times}{\scalebox{0.7}{$U$}} V)\subsm{\times}{U_j}V_{jl}})}{l\!\in \! \mathrm{J}_i}$, for any \( k \in \mathrm{J}_i,(i,j) \in \mathrm{I}^2\).

\noindent On the other hand $s$ is a composite of:
\begin{equation*}
\begin{array}{rcl}
s_1: & \subsm{\prod}{((i,k),(j,l))\! \in\! \mathrm{L}^2} \textsf{Hom}(V_{ik} \times_U V_{jl}\subsm{\times}{\scalebox{0.7}{$U$}}V, Z) & \rightarrow \subsm{\prod}{i\in \rm{I} ,(k,l)) \!\in \! \mathrm{J_i}^2} \textsf{Hom}(V_{ik} \times_U V_{il}\subsm{\times}{\scalebox{0.7}{$U$}}V, Z) \\
     & \subsm{(f_{(i,k)(j,l)})}{((i,k),(j,l))\! \in\! \mathrm{L}^2}  &\mapsto \subsm{(f_{(i,k)(i,l)})}{i\in \mathrm{ I},(k,l)\! \in\! \mathrm{J_i}^2},
\end{array}
\end{equation*}\noindent
and
\begin{equation*}
	\begin{array}{rcl}
		s_2: & {\subsm{\prod}{i\in \rm{I} ,(k,l)) \!\in \! \mathrm{J_i}^2}\textsf{Hom}(\subsm{V}{ik}\subsm{\times}{U} \subsm{V}{il}\subsm{\times}{U} V,Z)} & \rightarrow {\subsm{\prod}{i\in \rm{I} ,(k,l) \!\in \! \mathrm{J_i}^2}\textsf{Hom}(\subsm{V}{ik}\subsm{\times}{U_i} \subsm{V}{il}\subsm{\times}{U}V,Z)} \\
		& \subsm{(f_{(i,k,l)})}{i\in \rm{I} ,(k,l) \!\in \! \mathrm{J_i}^2}  &\mapsto \subsm{(f_{(i,k,l)}\circ \varphi_{i,k,l})}{i\in \rm{I} ,(k,l) \!\in \! \mathrm{J_i}^2}.
	\end{array}
\end{equation*}
where \(\varphi_{i,k,l}\) is the canonical morphism given by the universal property of pullback from $V_{ik}\times_{U_i}V_{il}$ to $V_{ik}\times_{U}V_{il}$ for any $i\in \rm{I} ,(k,l) \!\in \! \mathrm{J_i}^2$.

\noindent We note that, for any $t\in \{1,2\}$, we have:
\begin{enumerate}
\item[(a)] \(d_0^{Z,\kappa} = \subsm{\left( d_0^{Z,\eta_i}\right)}{i \!\in\! \mathrm{I}} \circ d_0^{Z,\eta}\),
\item[(b)] \(s \circ d_t^{Z,\kappa} = \subsm{\left(d_t^{Z,\eta_i}\right)}{i\! \in \!\mathrm{I}}\) ,
\item[(c)] \(d_t^{Z,\kappa} \circ \subsm{\left( d_0^{Z,\eta_i}\right)}{i\! \in\! \mathrm{I}} = r \circ d_t^{Z,\eta}\).
\item[(d)] For all \(k \in \mathrm{J}_i\), \(\subsm{\left(d_0^{Z,p_{ij}}\right)}{(i,j)\! \in\! \mathrm{I}^2}\) and \(\subsm{\left(d_0^{Z,q_{ij}^k}\right)}{ (i,j)\! \in\! \mathrm{I}^2}\) are monomorphisms, since \(\subsm{(\ell_{ij})}{j\!\in \! \mathrm{J}_i}\) and therefore \(\eta_i\) is a universal effective epimorphism for all \(i \in \mathrm{I}\), by \cite[Proposition 3.26]{awodey}. Hence \(r\) is a monomorphism as a composite of monomorphisms. 
\end{enumerate}
\noindent We want to prove that the bold diagram in Figure \ref{topoul1134} is an equalizer diagram. 

\noindent Let \(f: T \rightarrow \subsm{\prod}{(i,k)\!\in\! \mathrm{L}}\textsf{Hom}(\subsm{V}{ik}\subsm{\times}{U} V,Z)\) such that 
\begin{equation}\label{eqn22}
d_1^{Z,\kappa} \circ f = d_2^{Z,\kappa} \circ f.
\end{equation}
\noindent We can write \(f\) uniquely as \(\subsm{\langle f_i \rangle}{i\! \in\! \mathrm{I}}\) where \(f_i: T \rightarrow \subsm{\prod}{(i,k)\!\in\! \mathrm{L}}\textsf{Hom}(\subsm{V}{ik}\subsm{\times}{U} V,Z)\). We want to prove that there exists a unique \(h: T \rightarrow \textsf{Hom}(V, Z)\) such that \(d_0^{Z,\kappa} \circ h = f\) as shown in Figure \ref{topoul1134}.

\noindent Let \(i \in \mathrm{I}\). Composing the equality in Equation (\ref{eqn22}) with \(s_2 \circ s_1\) on the left, we obtain:
\[
d_1^{Z,\eta_i} \circ f_i = d_2^{Z,\eta_i} \circ f_i.
\]

\noindent Since \(\subsm{(\ell_{ij})}{j\!\in \! \mathrm{J}_i}\) is a universal effective epimorphism, \(\eta_i\) is an effective epimorphism and there exists a unique \(g_i: T \rightarrow \subsm{\prod}{i\! \in\! \mathrm{I}} \textsf{Hom}(U_i\times_U V, Z)\) such that:
\[
\subsm{\left(d_0^{Z,\eta_i}\right)}{i\! \in\! \mathrm{I}} \circ g_i = f_i.
\]
\noindent Thus, \(g = \subsm{\langle g_i \rangle}{i \!\in\! \mathrm{I}}\) is the unique map from \(T\) to \(\subsm{\prod}{i\! \in\! \mathrm{I}} \textsf{Hom}(U_i\times_U V, Z)\) such that \(\subsm{(d_0^{Z,\eta_i})}{i\! \in\! \mathrm{I}} \circ g = f\). From this, we obtain:
\[
d_1^{Z,\kappa} \circ \subsm{\left(d_0^{Z,\eta_i}\right)}{i\! \in\! \mathrm{I}} \circ g = d_2^{Z,\kappa} \circ \subsm{\left(d_0^{Z,\eta_i}\right)}{i\! \in\! \mathrm{I}} \circ g.
\]

\noindent Using note (c) above, we deduce:
\begin{equation}\label{eqn23}
r \circ d_1^{Z,\eta} \circ g = r \circ d_2^{Z,\eta} \circ g.
\end{equation}

\noindent Since \(r\) is a monomorphism, Equation \ref{eqn23} implies:
\[
d_1^{Z,\eta} \circ g = d_2^{Z,\eta} \circ g.
\]

\noindent Since \(\subsm{(\iota_i)}{i\!\in \! \mathrm{I}}\) is a universal effective epimorphism, there exists a unique morphism \(h: T \rightarrow \textsf{Hom}(V, Z)\) such that:
\[
d_0^{Z,\eta} \circ h = g.
\]
\noindent
\noindent This completes the proof.

\end{proof}

\noindent A direct consequence of the definition of universal effective epimorphisms, together with Remark~\ref{rem38} and Lemma~\ref{compoeff}, is that the class of universal effective epimorphisms defines a Grothendieck topology on any finitely complete, locally small category admitting pullbacks. This leads to the following definition-lemma.

\begin{deflem}\label{effctive1}
We define the {\sf effective Grothendieck topology of $\mathbb{C}$} denoted by $\mathbf{Cov}_{\mathbf{eff}}(\mathbb{C})$ to be the set
\[
\mathbf{Cov}_{\mathbf{eff}}(\mathbb{C}) := \big\{ (U, \subsm{(U_i, \iota_i)}{i\! \in\! \mathrm{I}}) \in \mathbf{Sink}(\mathbb{C}) \mid \subsm{(\iota_i)}{i\! \in\! \mathrm{I}} \text{ is a universal effective epimorphism} \big\}. \]
We define the {\sf effective Grothendieck site of $\mathbb{C}$} to be the pair $(\mathbb{C},\subsm{\mathbf{Cov}}{\mathbf{eff}}(\mathbb{C}))$.
\end{deflem}

\subsection{An alternative approach to composing gluing functors}
Let $\mathrm{ I}$ be an index set and for all $i\in \mathrm{ I}$, $\rm{J_i}$ be an index set. For all $i\in \mathrm{ I}$, let $\mathbf{G_i}$ be a split $\mathbb{C}$-gluing functor of type $\mathrm{J_i}$ with glued-up object $U_i$ over $\mathbf{G_i}$ through $\subsm{(\ell_{ij})}{j\!\in \! \mathrm{J}_i}$. Suppose that there is a split $\mathbb{C}$-gluing functor such that $\mathbf{G}(i)=U_i$ for all $i\in \mathrm{ I}$.  Let \(\EuScript{U}=(U, \subsm{(\mathbf{G}(i), \iota_i)}{i\! \in\! \mathrm{I}})\) and \(\EuScript{U}_i=(U_i,\subsm{(\mathbf{G_i}(j), \ell_{ij})}{j\!\in \! \mathrm{J}_i} )\). We will prove that in that context we can compose the family \(\subsm{(\mathbf{G_i})}{i\!\in \!\mathrm{ I}}\) together. By Lemma \ref{gluthru}, we know that $U_i$ is a glued-up object over \(\gcovv{\EuScript{U}_i}\) through \( \subsm{(\ell_{ij})}{j\!\in \! \mathrm{J}_i}\) and $U$ is a glued-up object over \(\gcovv{\EuScript{U}}\) through \(\subsm{(\iota_i)}{i\!\in \! \mathrm{ I}}\). Let $\EuScript{V}=(U, \subsm{(\mathbf{G_i}(j), \ell_{ij})}{j\!\in \! \mathrm{J}_i})$, then \(\gcovv{\EuScript{V}}\) can be viewed as a canonical split effective composite $\mathbb{C}$-gluing functor of the family of gluing functors \(\subsm{(\mathbf{G_i})}{i\!\in \!\mathrm{ I}}\) and $U$ is a composite glued-up object over the family \(\subsm{(\mathbf{G_i})}{i\!\in \!\mathrm{ I}}\) through $\subsm{(\ell_{ij})}{j\!\in \! \mathrm{J}_i}$.

\subsection{(Pre)sheaf on Grothendieck site}
In this section, we aim to recall the concept of (pre)sheaves in the more general framework of sites. 
 
\begin{definition}
    Let \((\mathbb{C}, \mathbf{Cov}(\mathbb{C}))\) be a Grothendieck site, and let \(\mathbb{D}\) be a category admitting products.
    
    \begin{enumerate}
        \item A {\sf presheaf} on \(\mathbb{C}\) with values in \(\mathbb{D}\) is a functor \(\mathbf{S}\) from \(\mathbb{C}^{\op}\) to \(\mathbb{D}\).
        \item A {\sf sheaf} on \(\mathbb{C}\) with values in \(\mathbb{D}\) is a presheaf such that for all $\EuScript{U} \in \mathbf{Cov}(\mathbb{C})$, $\mathbf{S} 
\circ \gcov{\EuScript{U}}$ is a $\mathbb{D}$-gluing functor. In other words, for all coverings \((U, \subsm{(U_i, \iota_i)}{i\! \in\! \mathrm{I}}) \in \mathbf{Cov}(\mathbb{C})\),
        \begin{equation}
        \begin{tikzcd}[column sep=large]
            \mathbf{S}(U) \arrow{r}{\subsm{\left\langle \mathbf{S}(\iota_i^{\op})\right \rangle }{i\! \in \!\mathrm{I}}} & \subsm{\prod\limits}{i \!\in\! \mathrm{I}} \mathbf{S}(U_i) \arrow[shift left]{rr}{\subsm{\left\langle \mathbf{S}((\proj{1}{U_i \times_U U_j})^{\op})\circ p_i \right\rangle}{(i,j)\! \in\! \mathrm{I}^2} } \arrow[shift right, swap]{rr}{ \subsm{\left\langle \mathbf{S}((\proj{2}{U_i \times_U U_j})^{\op}) \circ p_j\right\rangle}{(i,j)\! \in\! \mathrm{I}^2}} & & \subsm{\prod\limits}{(i,j)\! \in\! \mathrm{I}^2} \mathbf{S}(U_i \times_U U_j).
        \end{tikzcd}
        \label{fig1}
        \end{equation}

        \noindent is an equalizer diagram, where $p_i :  \subsm{\prod\limits}{i \!\in\! \mathrm{I}} \mathbf{S}(U_i)  \rightarrow \mathbf{S}(U_i)$ is the canonical projection.
   \item A {\sf separated presheaf} on \(\mathbb{C}\) with values in \(\mathbb{D}\) is a presheaf such that for all coverings \((U, \subsm{(U_i, \iota_i)}{i\! \in\! \mathrm{I}}) \in \mathbf{Cov}(\mathbb{C})\), the canonical morphism \( \subsm{\langle\mathbf{S}(\iota_i^{\op})\rangle}{i\! \in \!\mathrm{I}}:  \mathbf{S}(U) \rightarrow  \subsm{\prod\limits}{i \!\in\! \mathrm{I}} \mathbf{S}(U_i) $ is a monomorphism.
    \end{enumerate}
\end{definition}

\begin{remark}
\leavevmode
\begin{enumerate}
    \item When $\mathbb{C}$ is a locally small category. For any \(Z \in \mathbb{C}\), by definition, the representable functor \(\mathbf{Hom}(-, Z)\) is always a sheaf over the site \((\mathbb{C}, \mathbf{Cov}_{\mathbf{eff}}(\mathbb{C}))\) with values on $\mathbb{Sets}$.
    
    \item For all \(i, j \in \mathrm{I}\), by the definition of the fiber product, we have:
    \[
    \proj{1}{U_i \times_U U_j} \circ \iota_i = \proj{2}{U_i \times_U U_j} \circ \iota_j.
    \]
    Therefore, the family \(\left\langle \mathbf{S}(\iota_i^{\op}) \right\rangle\) always equalizes the two families
    \[
    \left\langle \mathbf{S}((\proj{1}{U_i \times_U U_j})^{\op}) \circ p_i \right\rangle
    \quad \text{and} \quad
    \left\langle \mathbf{S}((\proj{2}{U_i \times_U U_j})^{\op}) \circ p_j \right\rangle.
    \]

    \item Given a presheaf with values in $\mathbb{Sets}$, Yoneda Lemma gives us a natural isomorphism:
    \[
    \mathbf{S}(U) \simeq \mathbf{Nat}({\sf h}_U, \mathbf{S}), \quad \text{where } {\sf h}_U := \operatorname{Hom}(-, U).
    \]
    Hence, a presheaf \(\mathbf{S}\) on \(\mathbb{C}\) with values in $\mathbb{Sets}$ is a sheaf if and only if for every covering \((U, (U_i, \iota_i)_{i \in \mathrm{I}}) \in \mathbf{Cov}(\mathbb{C})\), the following diagram is an equalizer:
    
    \begin{center}
    \adjustbox{scale=0.85,center}{%
    \begin{tikzcd}[column sep=large]
    \mathbf{Nat}({\sf h}_U, \mathbf{S}) \arrow{r} &
    \displaystyle\prod_{i \in \mathrm{I}} \mathbf{Nat}({\sf h}_{U_i}, \mathbf{S}) 
    \arrow[shift left=0.6ex]{r} \arrow[shift right=0.6ex, swap]{r} &
    \displaystyle\prod_{(i,j) \in \mathrm{I}^2} \mathbf{Nat}({\sf h}_{U_i \times_U U_j}, \mathbf{S}),
    \end{tikzcd}}
    \end{center}

    \noindent where the arrows are induced by the morphisms in Diagram~(3) and the Yoneda isomorphisms.

    \item We suppose that
    \[
    \mathbb{D} \in \left\{ \mathbb{Sets}, \mathbb{Grp}, \mathbb{Ab}, R\text{-}\mathbb{Mod}, R\text{-}\mathbb{Alg} \mid R \text{ a ring} \right\},
    \]
    and let \(\mathbf{S}\) be a presheaf on \(\mathbb{C}\) with values in \(\mathbb{D}\).
    
    \begin{enumerate}
        \item \(\mathbf{S}\) is separated if and only if for any covering \((U, (U_i, \iota_i)_{i \in \mathrm{I}})\) and any \(s \in \mathbf{S}(U)\) such that \(\mathbf{S}(\iota_i^{\op})(s) = 0\) for all \(i\), then \(s = 0\).
        
        \item The following are equivalent:
        \begin{enumerate}
            \item \(\mathbf{S}\) is a sheaf.
                 \item The sequence
        \[
        0 \longrightarrow \mathbf{S}(U) 
        \xrightarrow{d_0} \prod_{i \in \mathrm{I}} \mathbf{S}(U_i) 
        \xrightarrow{d_1} \prod_{(i,j) \in \mathrm{I}^2} \mathbf{S}(U_i \times_U U_j)
        \]
        is exact, where
        \[
        d_0(s) := \langle \mathbf{S}(\iota_i^{\op})(s) \rangle_{i \in \mathrm{I}}, \quad
        d_1(\langle s_i \rangle) := \left\langle \mathbf{S}((\proj{1}{U_i \times_U U_j})^{\op})(s_i) 
        - \mathbf{S}((\proj{2}{U_i \times_U U_j})^{\op})(s_j) \right\rangle_{(i,j) \in \mathrm{I}^2}.
        \]
            \item \(\mathbf{S}\) is separated, and for any \(\langle s_i \rangle_{i \in \mathrm{I}} \in \prod_{i \in \mathrm{I}} \mathbf{S}(U_i)\) satisfying
            \[
            \mathbf{S}((\proj{1}{U_i \times_U U_j})^{\op})(s_i) = \mathbf{S}((\proj{2}{U_i \times_U U_j})^{\op})(s_j)
            \quad \text{for all } (i,j) \in \mathrm{I}^2,
            \]
            there exists \(s \in \mathbf{S}(U)\) such that \(\mathbf{S}(\iota_i^{\op})(s) = s_i\). We refer to this as the \textsf{gluing property} of \(\mathbf{S}\), and such a presheaf is called a \textsf{gluable presheaf}.
        \end{enumerate}
    \end{enumerate}
\end{enumerate}
\end{remark}

\begin{definition}
Let $(\mathbb{C},\mathbf{Cov}(\mathbb{C}))$ be a site and $\mathbb{D}$ is a category.
\begin{enumerate}
\item The category of presheaves on $\mathbb{C}$ with values in $\mathbb{D}$ is a functor category denoted as $\Psh{\mathbb{C}}{\mathbb{D}}$.
\item The category of separated presheaves on $\mathbb{C}$ with values in $\mathbb{D}$ denoted as $\SPsh{S}{\mathbb{D}}$ is the full subcategory of $\Psh{\mathbb{C}}{\mathbb{D}}$ whose objects are separated presheaves.

\item The category of sheaves on $\mathbb{C}$ with values in $\mathbb{D}$ denoted $\Sh{\mathbb{C}}{\mathbb{D}}$ as is the full subcategory of $\Psh{\mathbb{C}}{\mathbb{D}}$ whose objects are sheaves on a site.
\end{enumerate}
\end{definition}

\subsection{Direct image and restriction of a presheaf}
The following definition associates two functors, extension of scalars and restriction, with a morphism in a category.
\begin{definition}
	Let $f:V \rightarrow U $ be a morphism in $\mathbb{C}$. We define the functor
	\begin{enumerate} 
	\item $\mathbf{P}_f: (\mathbb{C} \! \downarrow \! U)^\op \rightarrow (\mathbb{C} \! \downarrow \! V)^\op$ such that $\mathbf{P}_f(W,\delta^\op )=(W\subsm{\times}{U} V, (\proj{2}{W\subsm{\times}{U} V})^\op)$ and $\mathbf{P}_f(h^\op)=(h\times_U \operatorname{id}_V)^\op$, for all $h : W \rightarrow W'$ morphism in $\mathbb{C}$.
	\item $\mathbf{R}_f: (\mathbb{C} \! \downarrow \! V)^\op \rightarrow (\mathbb{C} \! \downarrow \! U)^\op$ such that $\mathbf{R}_f(W,\delta^\op )=(W, (\delta \circ f )^\op)$ and $\mathbf{R}_f(h^\op)=h^\op$, for all $h : W \rightarrow W'$ morphism in $\mathbb{C}$.
	\end{enumerate} 
\end{definition}
\noindent We also define the direct image and the restriction of a functor.
\begin{deflem}\label{directimage}
Let $(\mathbb{C},\mathbf{Cov}(\mathbb{C}))$  be a Grothendieck site, $U, V \in \mathbb{C}$, and $f:V\rightarrow U$ be a morphism in $\mathbb{C}$.
\begin{enumerate} 
\item when $\mathbf{S}$ be a presheaf on $(\mathbb{C} \! \downarrow \! V)$ with values in $\mathbb{D}$, we define the { \sf direct image presheaf with respect $f$} denoted as $\subsm{f}{\smallstar}\mathbf{S}$ to be the presheaf on $(\mathbb{C} \! \downarrow \! U)$ with values in $\mathbb{D}$ defined as $\mathbf{S} \circ\mathbf{P}_f$. 
\item when $\mathbf{S}$ be a presheaf on $(\mathbb{C} \! \downarrow \! U)$ with values in $\mathbb{D}$, we define the { \sf restriction presheaf with respect $f$} denoted as $\subsm{\mathbf{S}|}{f}$ to be the presheaf on $(\mathbb{C} \! \downarrow \! V)$ with values in $\mathbb{D}$ defined as $\mathbf{S} \circ\mathbf{R}_f$. 
\end{enumerate}
\end{deflem}

\noindent Finally, we define the direct image and restriction of a natural transformation.
\begin{deflem}\label{directimage}
Let $(\mathbb{C},\mathbf{Cov}(\mathbb{C}))$  be a Grothendieck site, $U, V \in \mathbb{C}$, and $f:V\rightarrow U$ be a morphism in $\mathbb{C}$.
\begin{enumerate} 
\item when $ \alpha: \mathbf{S} \rightarrow \mathbf{T}$ is a natural tranformation of presheaves on $(\mathbb{C} \! \downarrow \! V)$ with values in $\mathbb{D}$, we define the { \sf direct image of $\alpha$ via $f$} denoted as $\subsm{f}{\smallstar}\alpha$ to be the natural transformation from $f_\smallstar \mathbf{S}$ to $f_\smallstar \mathbf{T}$  defined as $\subsm{(\alpha_{\mathbf{P}_f (W,\delta^\op )})}{(W,\delta^\op )\!\in \!(\mathbb{C}  \downarrow  U)^\op}$.
\item when $ \alpha: \mathbf{S} \rightarrow \mathbf{T}$ is a a natural tranformation of presheaf on $(\mathbb{C} \! \downarrow \! U)$ with values in $\mathbb{D}$, we define the { \sf restriction of $\alpha$ through $f$} denoted as $\subsm{\alpha|}{f}$ to be the natural transformation from $\subsm{\mathbf{S}|}{f}$ to $\subsm{\mathbf{T}|}{f}$  defined as $\subsm{(\alpha_{\mathbf{R}_f (W,\delta^\op )})}{(W,\delta^\op ) \!\in\! (\mathbb{C}  \downarrow  V)}$. 
\end{enumerate}
\end{deflem}
\begin{remark}
Given morphisms $f:V\rightarrow U$ and $g: W\rightarrow V$, $\mathbf{S},\mathbf{T}$ be presheaves on  $(\mathbb{C} \downarrow U)$ with values in $\mathbb{D}$, and a natural transformation $\alpha: \mathbf{S}\rightarrow \mathbf{T}$, when it is well-defined,  the composition of a restriction morphism induced by $f$ on $\alpha$ with a restriction morphism induced by $g$ on $\subsm{f}{\smallstar}(\subsm{\alpha|}{f})$ is a restriction morphism induced by $f\circ g$ on $\alpha$. Indeed, we have $$\subsm{(f\circ g)}{\smallstar}(\subsm{\alpha|}{f\circ g})=\subsm{g}{\smallstar}(\subsm{\subsm{f}{\smallstar}(\subsm{\alpha|}{f})|}{g}).$$
\end{remark}

\section{Gluing morphisms of (pre)sheaves on a site}

\noindent The following theorem reformulates the gluing property for morphisms between sheaves, showing its equivalence to the sheaf condition for an associated functor. We omit the proof, as it follows directly from the structure of the statement.

\begin{theorem}
Let \((\mathbb{C}, \mathbf{Cov}(\mathbb{C}))\) be a Grothendieck site.  
Let \(\mathbf{S}\) be a presheaf and \(\mathbf{T}\) a sheaf on \((\mathbb{C} \downarrow U)\), both valued in a category \(\mathbb{D}\).

\noindent Define the functor
\[
\mathsf{N}_{\mathbf{S}, \mathbf{T}} : (\mathbb{C} \downarrow U)^\op \to \mathbb{Sets}
\quad \text{by} \quad
(V, \delta^\op) \mapsto \operatorname{Nat}(\mathbf{S}|_{\delta}, \mathbf{T}|_{\delta}).
\]
Then \(\mathsf{N}_{\mathbf{S}, \mathbf{T}}\) is a sheaf on the site \(((\mathbb{C} \downarrow U), \mathbf{Cov}(\mathbb{C} \downarrow U))\), with values in \(\mathbb{Sets}\). In other words, for any covering 
\[
\EuScript{U} = (U, (U_i, \iota_i)_{i \in \mathrm{I}}) \in \mathbf{Cov}(\mathbb{C}),
\]
the composite functor \(\mathsf{N}_{\mathbf{S}, \mathbf{T}} $ $\circ \gcov{\EuScript{U}}\) is a \(\mathbb{Sets}\)-gluing functor. More precisely, for any family of natural transformations \(\bm{\alpha} = (\alpha_i)_{i \in \mathrm{I}}\), where
\[
\alpha_i: \mathbf{S}|_{\iota_i} \longrightarrow \mathbf{T}|_{\iota_i},
\]
such that for all \(i, j \in \mathrm{I}\),
\[
(\iota_i \circ \proj{1}{U_i \times_U U_j})_{\smallstar} \left( \alpha_i|_{\proj{1}{U_i \times_U U_j}} \right) 
= (\iota_j \circ \proj{2}{U_i \times_U U_j})_{\smallstar} \left( \alpha_j|_{\proj{2}{U_i \times_U U_j}} \right),
\]
there exists a unique natural transformation \(\alpha_{\EuScript{U}, \bm{\alpha}} : \mathbf{S} \to \mathbf{T}\) defined, for each \((V, \delta) \in (\mathbb{C} \downarrow U)\), by the unique morphism \({\alpha_{\EuScript{U}, \bm{\alpha}}}_{(V, \delta^\op)}\) making the following diagram commute:

\begin{figure}[H]
    \centering
    {\tiny
    \begin{tikzcd}[column sep=large]
        \mathbf{S}(V,\delta^\op) \arrow[dashed]{dd}{{\alpha_{\EuScript{U}, \bm{\alpha}}}_{\left(V,\delta^\op\right)}} \arrow{rr}{} 
        && \prod_{i \in \mathrm{I}} \mathbf{S}(V \times_U U_i, (\delta \circ \proj{1}{V \times_U U_i})^\op) 
        \arrow[shift left]{rr}{} \arrow[shift right,swap]{rr}{} 
        \arrow{dd}{\left( {\alpha_{i}}_{\left(V \times_U U_i, \left(\delta \circ \proj{1}{V \times_U U_i}\right)^\op\right) } \right)_{i \in \mathrm{I}}} 
        && \prod_{i,j \in \mathrm{I}} \mathbf{S}(V \times_U U_i \times_U U_j, (\delta \circ \proj{1}{V \times_U U_i \times_U U_j})^\op) 
        \arrow{dd}{\left( {\alpha_{i}}_{\left(V \times_U U_i \times_U U_j, \left(\delta \circ \proj{1}{V \times_U U_i \times_U U_j}\right)^\op\right)} \right)_{i,j \in \mathrm{I}}} \\
        &&&&\\
        \mathbf{T}(V, \delta^\op) \arrow{rr}{} 
        && \prod_{i \in \mathrm{I}} \mathbf{T}(V \times_U U_i, (\delta \circ \proj{1}{V \times_U U_i})^\op) 
        \arrow[shift left]{rr}{} \arrow[shift right,swap]{rr}{} 
        && \prod_{i,j \in \mathrm{I}} \mathbf{T}(V \times_U U_i \times_U U_j, (\delta \circ \proj{1}{V \times_U U_i \times_U U_j})^\op)
    \end{tikzcd}
    }
    \caption{Gluing diagram for natural transformations}
    \label{topwel1134}
\end{figure}

\noindent Moreover, for all \(i \in \mathrm{I}\), one has
\[
\alpha_{\EuScript{U}, \bm{\alpha}}|_{\iota_i} = \alpha_i.
\]
\end{theorem}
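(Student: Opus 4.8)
The plan is to verify the sheaf condition for $\mathsf{N}_{\mathbf{S},\mathbf{T}}$ directly. Fix a covering $\EuScript{U}=(U,(U_i,\iota_i)_{i\in\mathrm{I}})\in\mathbf{Cov}(\mathbb{C})$; since $U$ is the terminal object of $(\mathbb{C}\downarrow U)$ one has $\mathsf{N}_{\mathbf{S},\mathbf{T}}(U)=\operatorname{Nat}(\mathbf{S},\mathbf{T})$, and I would show that $\big(\operatorname{Nat}(\mathbf{S},\mathbf{T}),\,(\alpha\mapsto\alpha|_{\iota_i})_{i\in\mathrm{I}}\big)$ is a terminal cone over $\mathsf{N}_{\mathbf{S},\mathbf{T}}\circ\gcov{\EuScript{U}}$, which by the equalizer description of glued-up objects (Proposition~\ref{equalizer}) is exactly the required statement. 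That this family \emph{is} a cone is immediate from functoriality of restriction of natural transformations: for $i,j\in\mathrm{I}$ both $\big(\alpha|_{\iota_i}\big)|_{\proj{1}{U_i\times_U U_j}}$ and $\big(\alpha|_{\iota_j}\big)|_{\proj{2}{U_i\times_U U_j}}$ agree with the restriction of $\alpha$ along the canonical map $U_i\times_U U_j\to U$, up to the isomorphism $\varphi_{i,j}$ of Definition~\ref{gluingcover}. Finally, the case of an arbitrary covering of some $(V,\delta)\in(\mathbb{C}\downarrow U)$ reduces to this one, since the restriction of $\mathsf{N}_{\mathbf{S},\mathbf{T}}$ to $(\mathbb{C}\downarrow V)$ along $\delta$ is $\mathsf{N}_{\mathbf{S}|_{\delta},\,\mathbf{T}|_{\delta}}$ and $\mathbf{T}|_{\delta}$ is again a sheaf (coverings in $(\mathbb{C}\downarrow V)$ give coverings in $(\mathbb{C}\downarrow U)$ with the same fibre products).

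For terminality I would take a cone $(X,(\xi_i)_{i\in\mathrm{I}})$ over $\mathsf{N}_{\mathbf{S},\mathbf{T}}\circ\gcov{\EuScript{U}}$. An element $x\in X$ produces a family $\bm{\alpha}=(\alpha_i)_{i\in\mathrm{I}}$ with $\alpha_i=\xi_i(x)\colon\mathbf{S}|_{\iota_i}\to\mathbf{T}|_{\iota_i}$, and the cone relations are exactly the compatibility condition on $\bm{\alpha}$ in the statement. I would then build $\alpha_{\EuScript{U},\bm{\alpha}}$ as in the displayed diagram. Fix $(V,\delta^\op)\in(\mathbb{C}\downarrow U)^\op$ and write $p_i\colon V\times_U U_i\to V$, $q_i\colon V\times_U U_i\to U_i$ for the projections, noting $\iota_i\circ q_i=\delta\circ p_i$. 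Base-change stability (Definition~\ref{sito}) makes $(V\times_U U_i,\,p_i)_{i\in\mathrm{I}}$ a covering of $(V,\delta)$, so the sheaf property of $\mathbf{T}$ presents $\mathbf{T}(V,\delta^\op)$ as the equalizer of the two evident maps $\prod_i\mathbf{T}(V\times_U U_i,\dots)\to\prod_{i,j}\mathbf{T}(V\times_U U_i\times_U U_j,\dots)$. Given $s\in\mathbf{S}(V,\delta^\op)$ I would set $t_i:=(\alpha_i)_{(V\times_U U_i,\,q_i^\op)}\big(\mathbf{S}(p_i^\op)(s)\big)$ — this makes sense because $\mathbf{S}|_{\iota_i}(V\times_U U_i,q_i^\op)=\mathbf{S}(V\times_U U_i,(\iota_i\circ q_i)^\op)=\mathbf{S}(V\times_U U_i,(\delta\circ p_i)^\op)$ — and let ${\alpha_{\EuScript{U},\bm{\alpha}}}_{(V,\delta^\op)}(s)$ be the unique element of $\mathbf{T}(V,\delta^\op)$ restricting to $(t_i)_{i\in\mathrm{I}}$. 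The crux is that $(t_i)_{i\in\mathrm{I}}$ does lie in that equalizer: writing $a,b,c$ for the projections of $V\times_U U_i\times_U U_j$ onto $V$, $U_i$, $U_j$, pushing $t_i$, resp.\ $t_j$, along the two projections to $\mathbf{T}(V\times_U U_i\times_U U_j,\dots)$ and invoking naturality of $\alpha_i$, resp.\ $\alpha_j$, turns the required equality into
\[
(\alpha_i)_{(V\times_U U_i\times_U U_j,\,b^\op)}\big(\mathbf{S}(a^\op)(s)\big)=(\alpha_j)_{(V\times_U U_i\times_U U_j,\,c^\op)}\big(\mathbf{S}(a^\op)(s)\big);
\]
and this is exactly the compatibility condition on $\bm{\alpha}$, once one identifies $V\times_U U_i\times_U U_j$ with $\mathbf{P}_{g}(V,\delta^\op)$ for $g\colon U_i\times_U U_j\to U$ the structure map and unwinds the definitions of the direct image $(-)_{\smallstar}$ and the restriction $(-)|_{(-)}$.

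Three verifications then remain. \emph{Naturality of $\alpha_{\EuScript{U},\bm{\alpha}}$ in $(V,\delta)$}: for a morphism $(V',\delta')\to(V,\delta)$ the two candidate composites agree after applying the monomorphism $\mathbf{T}(V',\delta'^\op)\hookrightarrow\prod_i\mathbf{T}(V'\times_U U_i,\dots)$ supplied by the sheaf condition, by naturality of the $\alpha_i$ and stability of the fibre products under further base change. \emph{The identity $\alpha_{\EuScript{U},\bm{\alpha}}|_{\iota_i}=\alpha_i$}: evaluating at $(W,\gamma^\op)\in(\mathbb{C}\downarrow U_i)^\op$, one shows, again via naturality of $\alpha_i$ and one more use of the compatibility condition (for the pair $(i,k)$, $k\in\mathrm{I}$, together with the canonical isomorphism $W\times_U(U_i\times_U U_k)\cong W\times_U U_k$ valid since $W\to U$ factors through $U_i$), that $(\alpha_i)_{(W,\gamma^\op)}(s)$ already restricts, over the covering $\big(W\times_U U_k\big)_{k}$ of $(W,\iota_i\circ\gamma)$, to the family defining ${\alpha_{\EuScript{U},\bm{\alpha}}}_{(W,(\iota_i\circ\gamma)^\op)}(s)$. \emph{Uniqueness}: if $\alpha,\beta\colon\mathbf{S}\to\mathbf{T}$ satisfy $\alpha|_{\iota_i}=\beta|_{\iota_i}$ for all $i$, then for each $(V,\delta)$ the components $\alpha_{(V,\delta^\op)}$ and $\beta_{(V,\delta^\op)}$ agree after the monomorphism $\mathbf{T}(V,\delta^\op)\hookrightarrow\prod_i\mathbf{T}(V\times_U U_i,\dots)$ ($\mathbf{T}$ being a sheaf, hence separated), so $\alpha=\beta$. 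Together these say that $\alpha\mapsto(\alpha|_{\iota_i})_i$ is injective with image the set of compatible families, which is precisely terminality.

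The main obstacle is bookkeeping rather than depth: one must keep straight the slice categories $(\mathbb{C}\downarrow U)$, $(\mathbb{C}\downarrow U_i)$, $(\mathbb{C}\downarrow U_i\times_U U_j)$ and the canonical isomorphisms between iterated fibre products, so that the compatibility condition, the direct-image functors $(-)_{\smallstar}$, and the restriction functors $(-)|_{(-)}$ all line up on the nose. Once those identifications are pinned down, every step is a diagram chase whose only real inputs are that $\mathbf{T}$ is a separated sheaf and that the topology is stable under base change — so the statement indeed ``follows from the structure of the statement''.
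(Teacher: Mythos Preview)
Your approach is correct and is exactly the natural one; the paper in fact \emph{omits} the proof entirely, remarking only that ``it follows directly from the structure of the statement.'' Your write-up therefore supplies what the paper leaves implicit, and the outline (construct each component of $\alpha_{\EuScript{U},\bm{\alpha}}$ via the equalizer description of $\mathbf{T}(V,\delta^\op)$, then check naturality, the restriction identity, and uniqueness using separatedness of $\mathbf{T}$) is the standard and intended argument.

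One small correction is warranted in your verification of $\alpha_{\EuScript{U},\bm{\alpha}}|_{\iota_i}=\alpha_i$. The claimed isomorphism $W\times_U(U_i\times_U U_k)\cong W\times_U U_k$ is \emph{not} valid in general, even when the structure map $W\to U$ factors through $U_i$: the fibre product on the left is taken over $U$, not over $U_i$, so an extra copy of $U_i$ survives. What \emph{is} true is that the factorisation $\gamma\colon W\to U_i$ yields a section
\[
\sigma=(\mathrm{id}_W,(\gamma\circ p_k,q_k))\colon W\times_U U_k\;\longrightarrow\;W\times_U(U_i\times_U U_k)
\]
in the slice over $U_i\times_U U_k$, and that $\mathbf{S}(p_k^\op)(s)$ lies in the image of $\mathbf{S}(\sigma^\op)$. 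This is enough: naturality of $\alpha_i|_{\pi_1}$ and $\alpha_k|_{\pi_2}$ along $\sigma$, together with the compatibility condition evaluated at $(W,(\iota_i\circ\gamma)^\op)$, gives $(\alpha_i)_{(W\times_U U_k,\dots)}$ and $(\alpha_k)_{(W\times_U U_k,\dots)}$ agreeing on that image, which is all you need. So the step goes through with ``section'' in place of ``isomorphism.''
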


\section{Gluing (pre)sheaves on a site}\label{gluprshv}
In this section $(\mathbb{C},\subsm{\mathbf{Cov}}{}(\mathbb{C}))$ is a Grothendieck site, and $\mathbb{D}$ is a category.
\subsection{Gluing functor in the category of presheaves}
\noindent In this section, we revisit the gluing of (pre)sheaves through the lens of a gluing functor, providing a categorical formulation of the construction.

\begin{definition}
	Let $\mathbb{E}\in \{\Psh{\mathbb{C}}{\mathbb{D}}, \SPsh{\mathbb{C}}{\mathbb{D}}, \Sh{\mathbb{C}}{\mathbb{D}}\}$, ${\rm I}$ be a set  
	and $\mathbf{G}$ be a functor from $\mathbb{S}_2({\rm I})$ to $\mathbb{E}$. 
\begin{enumerate}
\item For each $V\in \mathbb{C}$, we define the induced split $\mathbb{D}$-gluing functor at $V$, denoted by $\mbf{G}^{V}$, as the functor such that, for all $a \in \mathbb{S}_2({\rm I})$ and $f$ morphism in $\mathbb{S}_2({\rm I})$,
	\begin{enumerate}
		\item $\mbf{G}^V(a) = {\mbf{G}(a)}({V})$;
		\item ${\mbf{G}}^{V}(f)= {{\mbf{G}(f)}}_V$.		
	\end{enumerate}
\item For each $f: V \rightarrow W$, we define the refinement map $\rho_{\mathbf{G},f}$ from $\mathbf{G}^W$ to $\mathbf{G}^V$ as follows:  $\subsm{ \rho_{\mathbf{G},f}}{a} = {\mbf{G}(a)}({f})$, for each $a\in \mathbb{S}_2 ({\rm I} )$.
\end{enumerate}
\end{definition}

In the following definition, we introduce the notion of a split gluing functor for presheaves.
\begin{definition}\label{standsheaf1}
	Let \begin{itemize} 
		\item ${\rm I}$ be a set,
	\item $\EuScript{U}:=(U, \subsm{(U_i, \iota_i)}{i\! \in\! \mathrm{I}})\in \subsm{\mathbf{Cov}}{}(\mathbb{C})$,
	\item $\mathbb{E}\in \{\Psh{(\mathbb{C}\! \downarrow \! U)}{\mathbb{D}}, \SPsh{(\mathbb{C}\! \downarrow \! U)}{\mathbb{D}}, \Sh{(\mathbb{C}\! \downarrow \! U)}{\mathbb{D}}\}$,  
	\item $\mathbf{S}=\subsm{\left( \mathbf{S}_i\right)}{i \in {\rm I}}$ be a family of functor where $\mathbf{S}_i\in \mathbb{E}_{U_i}$, for any $i \in {\rm I}$. 
	\item $\mathbf{\Phi} =\subsm{\left( \Phi_{i,j}\right)}{(i,j) \in {\rm I\times I}}$ be a family of natural correspondence from $ \subsm{(\subsm{
		\iota}{i} \circ \proj{1}{U_i\subsm{\times}{U} U_j} )}{\smallstar}  \subsm{\subsm{\mathbf{S}}{i}|}{ \proj{1}{U_i\subsm{\times}{U} U_j}}$ to  $\subsm{(\subsm{
		\iota}{j} \circ \proj{1}{U_j\subsm{\times}{U} U_i} )}{\smallstar}  \subsm{\subsm{\mathbf{S}}{j}|}{ \proj{1}{U_j\subsm{\times}{U} U_i}}$
		
	\end{itemize} 

\begin{enumerate}	
\item We say that the tuple $\mathbf{\mathcal{I}}$ $:= (\EuScript{U} , \mathbf{S}, \mathbf{\Phi} )$ is a {\sf gluing datum}.
\item Let $\mathbf{\mathcal{I}}$ $:= (\EuScript{U} , \mathbf{S}, \mathbf{\Phi} )$ be a gluing datum. We define $\mathbf{H}_{\mathbf{\mathcal{I}}}$ as follows: for all $i,j \in \mathrm{I}$, we have
	\begin{enumerate}
		\item $\Goi{\mathbf{H}_{\mathbf{\mathcal{I}}}}{i} = \subsm{\subsm{
		\iota}{i}}{\smallstar}\subsm{\mathbf{S}}{i}$; 
		\item $\Goij{\mathbf{H}_{\mathbf{\mathcal{I}}}}{i}{j}=$ $ \subsm{(\subsm{
		\iota}{i} \circ \proj{1}{U_i\subsm{\times}{U} U_j} )}{\smallstar}  \subsm{\subsm{\mathbf{S}}{i}|}{ \proj{1}{U_i\subsm{\times}{U} U_j}}$;
				\item $\Goij{\mathbf{H}_{\mathbf{\mathcal{I}}}}{j}{i}=$ $ \subsm{(\subsm{
		\iota}{j} \circ \proj{1}{U_j\subsm{\times}{U} U_i} )}{\smallstar}  \subsm{\subsm{\mathbf{S}}{j}|}{ \proj{1}{U_j\subsm{\times}{U} U_i}}$;
			\item $\Gnij{\text{$\mathbf{H}_{\mathbf{\mathcal{I}}}$}}{\text{$\iuv{i}{j}$}}=
		\subsm{\text{$(\mathbf{S}_i( (\proj{1}{(U_i \subsm{\times}{U}V)\subsm{\times}{U} U_j})^\op))$}}{(V, \delta) \in (\mathbb{C}\! \downarrow \! U)}$.
		\item $\Gnij{\text{$\mathbf{H}_{\mathbf{\mathcal{I}}}$}}{\text{$\tau_{i,j}$}}=\Phi_{i,j}$
		\end{enumerate}
\end{enumerate}
\end{definition} 
	
We provide a canonical representative for the limit of a split gluing functor in certain categories.		\begin{deflem}\label{standsheaf2}   We continue with the notation from Definition-Lemma~\ref{standsheaf1}, and further assume that $\mathbb{D} \in \{ \mathbb{Sets}, \mathbb{Grp}, \mathbb{Ab}, R\text{-}\mathbb{Mod}, R\text{-}\mathbb{Alg} \mid R \text{ is a ring} \}$. We define the {\sf standard representative of a limit of $\mathbf{H}_{\mathbf{\mathcal{I}}}$} as the pair
\[
\left(\dindsi{\mathbf{L}}{{\mathbf{\mathcal{I}}}}{}, \dindiv{\Psi}{\mathbf{L}}{{\mathbf{\mathcal{I}}}}{}\right),
\]
where:
\begin{itemize}
    \item $\dindsi{\mathbf{L}}{{\mathbf{\mathcal{I}}}}{}$ is the (pre)sheaf on $(\mathbb{C}\! \downarrow \! U)$ defined by
    \[
    \dindsi{\mathbf{L}}{{\mathbf{\mathcal{I}}}}{}(V, \delta^\op) := L^{(V, \delta^\op)}_{{\mathbf{\mathcal{I}}}}, \quad \text{for every } V \in (\mathbb{C}\! \downarrow \! U),
    \]
    as in Definition-Lemma~\ref{stdrep}, and on morphisms $f: (V,\delta) \to (W,\rho)$ in $(\mathbb{C}\! \downarrow \! U)$, it is given by
    \[
    \dindsi{\mathbf{L}}{{\mathbf{\mathcal{I}}}}{}(f^\op) := \operatorname{lim} \rho_{\mathbf{H}_{\mathbf{\mathcal{I}}},f^\op}.
    \]
    More concretely, for $\subsm{(s_i)}{i \in \mathrm{I}} \in L^{(W, \rho^\op)}_{{\mathbf{\mathcal{I}}}}$, we have:
    \[
    \dindsi{\mathbf{L}}{{\mathbf{\mathcal{I}}}}{}(f^\op)\left( \subsm{(s_i)}{i \in \mathrm{I}} \right) = \subsm{\langle\Goi{\mathbf{H}_{\mathbf{\mathcal{I}}}}{i}(f^\op)(s_i)\rangle}{i \in \mathrm{I}}.
    \]
    
    \item $\dindiv{\Psi}{\mathbf{L}}{{\mathbf{\mathcal{I}}}}{} := \left( \dindiv{\Psi}{\mathbf{L}}{{\mathbf{\mathcal{I}}}}{_a} \right)_{a \in \mathbb{P}_2(\mathrm{I})}$ is the family of natural transformations such that for any $(V,\delta^\op) \in (\mathbb{C}\! \downarrow \! U)$:
    \begin{itemize}
        \item For each $i \in \mathrm{I}$, 
        \[
        \dindgiv{\Psi}{\mathbf{L}}{{\mathbf{\mathcal{I}}}}{i \ (V,\delta^\op)}{} : \dindsi{\mathbf{L}}{{\mathbf{\mathcal{I}}}}{}(V,\delta^\op) \longrightarrow \Goi{\mathbf{H}_{\mathbf{\mathcal{I}}}}{i}(V,\delta^\op)
        \]
        sends $\subsm{(s_k)}{k \in \mathrm{I}}$ to $s_i$.
        
        \item For each $(i,j) \in \mathbb{S}_2(\mathrm{I})$,
        \[
        \dindgiv{\Psi}{\mathbf{L}}{{\mathbf{\mathcal{I}}}}{(i,j)\ (V,\delta^\op)}{} := \subsm{\Gnij{\text{$\mathbf{H}_{\mathbf{\mathcal{I}}}$}}{\text{$\iuv{i}{j}$}}}{(V,\delta^\op)} \circ \dindgiv{\Psi}{\mathbf{L}}{\mathbf{H}_{\mathbf{\mathcal{I}}}}{i\ (V,\delta^\op)}{}.
        \]
    \end{itemize}
In particular, the following diagram commutes: 
$$\xymatrix{    \dindsi{\mathbf{L}}{{\mathbf{\mathcal{I}}}}{}\!\!\mid_{\iota_i \circ \proj{1}{\subsm{U}{i}\subsm{\times}{\scalebox{0.7}{$U$}} U_{j}}} \ar[rd]_{{\dindiv{\Psi}{\mathbf{L}}{{\mathbf{\mathcal{I}}}}{j}}\!\mid_{\iota_j \circ\proj{1}{\subsm{U}{j}\subsm{\times}{\scalebox{0.7}{$U$}} U_{i}}}} \ar[rr]^{ {\dindiv{\Psi}{\mathbf{L}}{{\mathbf{\mathcal{I}}}}{i}}\!\mid_{\iota_i \circ\proj{1}{\subsm{U}{i}\subsm{\times}{\scalebox{0.7}{$U$}} U_{j}}}} &&\ar[dl]^{\Phi_{i,j}} \left( \iota_i \smallstar \mathbf{S}_i \right) \!\!\mid_{\iota_i \circ \proj{1}{\subsm{U}{i}\subsm{\times}{\scalebox{0.7}{$U$}} U_{j}}}\\
&  \left( \iota_j \smallstar \mathbf{S}_j \right) \!\!\mid_{\iota_j \circ \proj{1}{\subsm{U}{j}\subsm{\times}{\scalebox{0.7}{$U$}} U_{i}}} }$$ 
\end{itemize}

\end{deflem}
\begin{proof}

 Let $((V, \delta), \subsm{(V_k,\delta_k), \eta_k)}{k\! \in\! \mathrm{K}})\in \subsm{\mathbf{Cov}}{}(\mathbb{C}\downarrow U)$.
We recall that:

{ $$  \dindsi{\mathbf{L}}{{\mathbf{\mathcal{I}}}}{}\!(V, \delta^\op):=\begin{Bmatrix}\subsm{(\subsm{s}{i})}{i\!\in \!\rm{I}} \in \subsm{\prod\nolimits}{i\! \in\! \rm{I} }  \subsm{\mathbf{S}}{i}(U_i\times_U V) \;|\; \subsm{\mathbf{S}_i( (\proj{1}{(U_i \subsm{\times}{U}V)\subsm{\times}{U} U_j})^\op)}{} ({\subsm{s}{i}})=\Phi_{j,i}( \subsm{\mathbf{S}_j( (\proj{1}{(U_j \subsm{\times}{U}V)\subsm{\times}{U} U_j})^\op)}{} ( {\subsm{s}{j}} )), \; \forall \; i,j\in\rm{I}\end{Bmatrix}.$$ }
$\dindsi{\mathbf{L}}{{\mathbf{\mathcal{I}}}}{}\! $ is a presheaf, since $\Goi{\mathbf{H}_{\mathbf{\mathcal{I}}}}{i}$  is a presheaf for all $i \in {\rm I}$.  $\dindsi{\mathbf{L}}{{\mathbf{\mathcal{I}}}}{}\! $ is a separated presheaf, since $\Goi{\mathbf{H}_{\mathbf{\mathcal{I}}}}{i}$  is a separated presheaf for all $i \in {\rm I}$ and $\mathbf{Cov}(\mathbb{C}\downarrow U)$ is stable under base change.

Next, we prove that $\dindsi{\mathbf{L}}{{\mathbf{\mathcal{I}}}}{}\! $ is a sheaf.
To prove $\dindsi{\mathbf{L}}{{\mathbf{\mathcal{I}}}}{}\!$ is gluable, let $\subsm{{\bf t}}{k}=(\subsm{{\subsm{t}{k_i}})}{i\!\in\! \rm{I}} \!\in\! \dindsi{\mathbf{L}}{{\mathbf{\mathcal{I}}}}{}\!(\subsm{V}{k}, \delta_k^\op)$ for $k \in \mathrm{K}$ be a family of sections such that
\begin{equation*}
	\dindsi{\mathbf{L}}{{\mathbf{\mathcal{I}}}}{}\!((\proj{1}{\subsm{V}{k}\subsm{\times}{\scalebox{0.7}{$V$}} V_{k'}})^\op) (\subsm{\mbf{t}}{k})=	\dindsi{\mathbf{L}}{{\mathbf{\mathcal{I}}}}{}\!((\proj{2}{\subsm{V}{k}\subsm{\times}{\scalebox{0.7}{$V$}} V_{k'}})^\op) (\subsm{\mbf{t}}{k'}) 
\end{equation*}
for all $k,k'\in \mathrm{K}$. 
Let $i\in \mathrm{I}$ and $k, k'\in \mathrm{K}$. 
\begin{equation*}
\mathbf{S}_i((\proj{1}{(U_i\subsm{\times}{\scalebox{0.7}{$V$}} \subsm{V}{k})\subsm{\times}{\scalebox{0.7}{$U_i$}} (U_i \subsm{\times}{\scalebox{0.7}{$V$}} V_{k'})})^\op) (\subsm{t}{k_i})=	\mathbf{S}_i((\proj{2}{(U_i \subsm{\times}{\scalebox{0.7}{$V$}} \subsm{V}{k})\subsm{\times}{\scalebox{0.7}{$U_i$}} (U_i\subsm{\times}{\scalebox{0.7}{$V$}} V_{k'})})^\op) (\subsm{t}{k_i'}) 
\end{equation*}
We deduce that, since $\mathbf{S}_i$ is a sheaf and $$\left((U_i, \iota_i) , \left((U_i\times_V V_{k}, \iota_i \circ \pi_1^{U_i \times_V V_{k}}), \pi_1^{U_i \times_V V_{k}}\right)_{k \in \mathrm{K}} \right) \in  \subsm{\mathbf{Cov}}{}(\mathbb{C}\downarrow U),$$ there exist $\subsm{s}{i} \!\in\! \Goi{{\mathbf{H}_{\mathbf{\mathcal{I}}}}}{i}(V, \delta^\op)$, such that
\begin{equation}\label{Eq3}
\Goi{{\mathbf{H}_{\mathbf{\mathcal{I}}}}}{i}(\subsm{\eta}{k}^\op)(\subsm{s}{i})=  {\subsm{t}{\subsm{k}{i}}}.
\end{equation}
We set $\mathbf{s}:=\subsm{(\subsm{s}{i})}{i\!\in\! \rm{I}}$. Let $i, j \in \mathrm{I}$. We want to prove $\mathbf{s}\in \dindsi{\mathbf{L}}{{\mathbf{\mathcal{I}}}}{}\!(V)$. That is, 
\begin{equation}\label{Eq4}
\text{$\subsm{\Gnij{\text{${\mathbf{H}_{\mathbf{\mathcal{I}}}}$}}{\text{$\iuv{i}{j}^\op$}}}{}$} ({\subsm{s}{i}})=\Phi_{i,j} (\text{$\subsm{\Gnij{\text{${\mathbf{H}_{\mathbf{\mathcal{I}}}}$}}{\text{$\iuv{j}{i}^\op$}}}{}$} ( {\subsm{s}{j}} )).\end{equation}
Since $\subsm{\mbf{t}}{k} \in \dindsi{\mathbf{L}}{{\mathbf{\mathcal{I}}}}{}\!(\subsm{V}{k}, \delta_k^\op)$, Equation (\ref{Eq3}) implies 
\begin{equation} \label{Em}\nonumber
	\subsm{\Gnij{\text{${\mathbf{H}_{\mathbf{\mathcal{I}}}}$}}{\text{$\iuv{i}{j}^\op$}}}{}(\Goi{{\mathbf{H}_{\mathbf{\mathcal{I}}}}}{i}(\subsm{\eta}{k}^\op)(\subsm{s}{i})) =	\Phi_{j,i}(\subsm{\Gnij{\text{${\mathbf{H}_{\mathbf{\mathcal{I}}}}$}}{\text{$\iuv{j}{i}^\op$}}}{}(\Goi{{\mathbf{H}_{\mathbf{\mathcal{I}}}}}{j}(\subsm{\eta}{k}^\op)(\subsm{s}{j}))) .
\end{equation}
Since $\Gnij{\text{${\mathbf{H}_{\mathbf{\mathcal{I}}}}$}}{\text{$\iuv{i}{j}^\op$}}$ and $\Phi_{j,i} \circ \subsm{\Gnij{\text{${\mathbf{H}_{\mathbf{\mathcal{I}}}}$}}{\text{$\iuv{j}{i}^\op$}}}{}$ are natural transformations, the previous equality can be rewritten as 
\begin{center}

$ {\Goij{{\mathbf{H}_{\mathbf{\mathcal{I}}}}}{i}{j}}$$(\eta_k^\op)(\text{$\subsm{\Gnij{\text{${\mathbf{H}_{\mathbf{\mathcal{I}}}}$}}{\text{$\iuv{i}{j}^\op$}}}{}$} ({\subsm{s}{i}}))
={\Goij{{\mathbf{H}_{\mathbf{\mathcal{I}}}}}{j}{i}}$$(\eta_k^\op)(\Phi_{j,i} (\text{$\subsm{\Gnij{\text{${\mathbf{H}_{\mathbf{\mathcal{I}}}}$}}{\text{$\iuv{j}{i}^\op$}}}{}$} ({\subsm{s}{j}}))),     
$
	
\end{center}
for all $k\in \mathrm{K}$. By the separation property of the sheaf $\Goij{{\mathbf{H}_{\mathbf{\mathcal{I}}}}}{i}{j}$ and 
$$\left((U_i\times_U U_j , \iota_i\circ  \pi_1^{U_i \times_U U_j}) , \left((U_i\times_U U_j\times_V V_{k}, \iota_i \circ \pi_1^{U_i \times_U U_j\times_V V_{k}}), \pi_1^{(U_i \times_U U_j)  \times_V V_{k}}\right)_{k \in \mathrm{K}} \right) \in  \subsm{\mathbf{Cov}}{}(\mathbb{C}\downarrow U),$$ we obtain that Equation (\ref{Eq4}) is satisfied. Hence $\mathbf{s}\in \dindsi{\mathbf{L}}{{\mathbf{\mathcal{I}}}}{}(V, \delta^\op)$. $(\dindsi{\mathbf{L}}{{\mathbf{\mathcal{I}}}}{}\! , \dindiv{\Psi}{\mathbf{L}}{{\mathbf{\mathcal{I}}}}{})$ is clearly a cone over ${\mathbf{H}_{\mathbf{\mathcal{I}}}}$.

Suppose that $(L' , \Psi')$ is a cone over ${\mathbf{H}_{\mathbf{\mathcal{I}}}}$. Let  $i,j\in \mathrm{ I}$ and $(V, \delta^\op)\in (\mathbb{C}\downarrow U)$. By the universal property of $ L^{(V, \delta^\op)}_{{\mathbf{\mathcal{I}}}}$, there exists a unique morphism $\mu_{(V, \delta^\op)}: L'  \rightarrow \dindsi{\mathbf{L}}{{\mathbf{\mathcal{I}}}}{}(V, \delta^\op)$ making the following diagram 
\begin{figure}[H]
	\begin{center}
			\adjustbox{scale=1.0,center}{
\begin{tikzcd}
		\scalebox{0.7}{$L'(V, \delta^\op)$} \arrow[rrd,bend left,"\Psi'_{i (V, \delta^\op)}"]
	\arrow[ddr,bend right,swap,"\Psi'_{j  (V, \delta^\op)}"]
	\arrow[dr,dashed,"\mu_{(V, \delta^\op)}"] \\
	& \scalebox{0.7}{$\dindsi{\mathbf{L}}{{\mathbf{\mathcal{I}}}}{} \!(V, \delta^\op) $} \arrow[d,"\dindgiv{\Psi}{\mathbf{L}}{{\mathbf{\mathcal{I}}}}{i \ (V,\delta^\op)}{} "'] \arrow[r,"\dindgiv{\Psi}{\mathbf{L}}{{\mathbf{\mathcal{I}}}}{j \ (V,\delta^\op)}{} "] & \scalebox{0.7}{${\Goi{{\mathbf{H}_{\mathbf{\mathcal{I}}}}}{i}(V, \delta^\op)}$} \arrow[d,"\subsm{\Gnij{\text{${\mathbf{H}_{\mathbf{\mathcal{I}}}}$}}{\text{$\iuv{i}{j}$}}}{(V, \delta^\op)}"]  \\
	& \scalebox{0.7}{${\Goi{{\mathbf{H}_{\mathbf{\mathcal{I}}}}}{j}(V, \delta^\op)}$} \arrow[r,swap,"\subsm{\Gnij{(\Phi_{j,i}\circ \text{${\mathbf{H}_{\mathbf{\mathcal{I}}}}$}}{\text{$\iuv{j}{i}$}})}{(V, \delta^\op)}"]  & \scalebox{0.7}{$\Goij{{\mathbf{H}_{\mathbf{\mathcal{I}}}}}{i}{j}(V, \delta^\op)$}
	\end{tikzcd}}\caption{}\label{topkel11346}
	\end{center}
\end{figure}	
\noindent commute. In particular, $ \mu_{(V, \delta^\op)}=\langle \Psi'_{i (V, \delta^\op)}\rangle_{i \in \mathrm{I}}$. Finally, we prove that $\mu_{(V, \delta^\op)}$ is a natural transformation. Let $f:(V, \delta^\op)\rightarrow (W, \rho^\op)$ be a morphism in $(\mathbb{C}\downarrow U)$ and $s \in L'(W, \rho^\op)$. We have
\begin{align*}
\dindsi{\mathbf{L}}{{\mathbf{\mathcal{I}}}}{}(f^\op)\circ \mu_{(V, \delta^\op)}(s)&= \subsm{\langle \Goi{{\mathbf{H}_{\mathbf{\mathcal{I}}}}}{i} (f^\op)\circ \ \Psi'_{i (V, \delta^\op)} (s)\rangle}{i\in {\rm I}}\\&=\subsm{\langle \Psi'_{i (W, \rho^\op)}\circ L'(f^\op)(s)\rangle}{i\!\in \! \mathrm{ I}},\; \text{since $L'$ is a natural transformation}\\&=\mu_{(W, \rho^\op)}\circ L'(f^\op)(s).
\end{align*} 
\end{proof}
\begin{remark}
As shown in the proof of Definition-Lemma~\ref{standsheaf1}, verifying that a presheaf obtained by gluing a family of sheaves satisfies the sheaf condition requires both the separability and the gluing properties for each sheaf $\mathbf{S}_i$, where $i \in \mathrm{I}$. In contrast, to ensure that the resulting presheaf is merely separated, it suffices that each $\mathbf{S}_i$ is separated.
\end{remark}

\subsection{Canonical Functor associated with a presheaf and a Cover}

Building on the notion of a canonical functor associated with a sink, we now define the canonical functor associated with a given presheaf and a cover.

\begin{definition}
Let $\EuScript{U} := (U, \subsm{(U_i, \iota_i)}{i \in \mathrm{I}}) \in \subsm{\mathbf{Cov}}{}(\mathbb{C})$ be a cover, and let $\mathbf{F}$ be a presheaf on $(\mathbb{C},\subsm{\mathbf{Cov}}{}(\mathbb{C}))$ with value on $\mathbb{D}$. 

We define the \sf{canonical functor associated with $\mathbf{F}$} and the cover $\EuScript{U}$ as
\[
\mathbf{C}^{\mathbf{F}}_{\text{$\EuScript{U}$}} := \mathbf{H}_{\mathbf{\mathcal{I}}} ,
\]
where ${\mathcal{I}} $ $= (\text{$\EuScript{U}$}, \mathbf{S}, \mathbf{\Phi})$ is the associated gluing datum, given by:
\begin{itemize}
    \item $\mathbf{S} = \left( \mathbf{F}|_{\iota_i} \right)_{i \in \mathrm{I}}$, the family of restrictions of $\mathbf{F}$ to each $\iota_i$;
    \item $\mathbf{\Phi} = ( \mathbf{S} (\varphi_{i,j}\times-))_{i \in \mathrm{I}}$ where $\varphi_{i,j}$ is the canonical pullback morphism between $U_i \times_U U_j$ and $U_j \times_U U_i$.
\end{itemize}
\end{definition}

A direct corollary of Definition-Lemma~\ref{standsheaf1} is the following result.

\begin{proposition}
Let $\mathbf{F}$ be a presheaf on $(\mathbb{C}, \subsm{\mathbf{Cov}}{}(\mathbb{C}))$ with values in $\mathbb{D}$. The following statements are equivalent:
\begin{enumerate}
    \item $\mathbf{F}$ is a sheaf.
    \item For every cover $\EuScript{U} := (U, \subsm{(U_i, \iota_i)}{i \in \mathrm{I}}) \in \subsm{\mathbf{Cov}}{}(\mathbb{C})$, the canonical gluing functor $\mathbf{C}^{\mathbf{F}}_{\EuScript{U}}$ is a split $\Psh{(\mathbb{C} \downarrow U)}{\mathbb{D}}$-gluing functor.
\end{enumerate}
\end{proposition}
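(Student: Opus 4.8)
The plan is to unwind the definitions and reduce the claimed equivalence to the characterization of sheaves already built into the definition of $\mathbf{C}^{\mathbf{F}}_{\EuScript{U}}$ via $\mathbf{H}_{\mathbf{\mathcal{I}}}$ together with Definition-Lemma~\ref{standsheaf2}. First I would fix a cover $\EuScript{U} = (U, (U_i, \iota_i)_{i \in \mathrm{I}}) \in \subsm{\mathbf{Cov}}{}(\mathbb{C})$ and unpack $\mathbf{C}^{\mathbf{F}}_{\EuScript{U}} = \mathbf{H}_{\mathbf{\mathcal{I}}}$ for the gluing datum $\mathcal{I} = (\EuScript{U}, (\mathbf{F}|_{\iota_i})_{i \in \mathrm{I}}, (\mathbf{F}(\varphi_{i,j}\times-))_{i,j})$. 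By Definition-Lemma~\ref{standsheaf2}, provided each $\mathbf{F}|_{\iota_i}$ is a sheaf on $(\mathbb{C}\downarrow U_i)$, the functor $\mathbf{H}_{\mathbf{\mathcal{I}}}$ is a split $\Psh{(\mathbb{C}\downarrow U)}{\mathbb{D}}$-gluing functor with standard representative $(\dindsi{\mathbf{L}}{\mathbf{\mathcal{I}}}{}, \dindiv{\Psi}{\mathbf{L}}{\mathbf{\mathcal{I}}}{})$; the point is to identify when the glued-up presheaf $\dindsi{\mathbf{L}}{\mathbf{\mathcal{I}}}{}$ is isomorphic, as a cone, to $\mathbf{F}$ itself (or, at minimum, when $\mathbf{H}_{\mathbf{\mathcal{I}}}$ admits a limit regardless of whether the $\mathbf{F}|_{\iota_i}$ are sheaves).

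For the implication (1) $\Rightarrow$ (2): if $\mathbf{F}$ is a sheaf, then each restriction $\mathbf{F}|_{\iota_i}$ is a sheaf on the localized site $(\mathbb{C}\downarrow U_i)$ — this is the standard fact that restriction of a sheaf along $\mathbf{R}_{\iota_i}$ sends sheaves to sheaves, which follows from Definition-Lemma~\ref{directimage} together with the base-change stability of coverings in $\subsm{\mathbf{Cov}}{}(\mathbb{C})$. Hence Definition-Lemma~\ref{standsheaf2} applies verbatim and produces the limit of $\mathbf{H}_{\mathbf{\mathcal{I}}} = \mathbf{C}^{\mathbf{F}}_{\EuScript{U}}$, so $\mathbf{C}^{\mathbf{F}}_{\EuScript{U}}$ is a split $\Psh{(\mathbb{C}\downarrow U)}{\mathbb{D}}$-gluing functor. (One does not even need to identify the limit with $\mathbf{F}$ here — only that a limit exists — though in fact the sheaf property of $\mathbf{F}$ gives a canonical isomorphism $\dindsi{\mathbf{L}}{\mathbf{\mathcal{I}}}{} \simeq \iota_\smallstar \mathbf{F}$ in the slice, which is worth recording.)

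For the converse (2) $\Rightarrow$ (1): assume that for every cover $\EuScript{U}$ the functor $\mathbf{C}^{\mathbf{F}}_{\EuScript{U}}$ has a limit $(\mathbf{L}, \Psi)$ in $\Psh{(\mathbb{C}\downarrow U)}{\mathbb{D}}$. Evaluating the limit cone at the terminal object $(U, \operatorname{id}_U) \in (\mathbb{C}\downarrow U)$, and using that limits in the functor category $\Psh{(\mathbb{C}\downarrow U)}{\mathbb{D}}$ are computed pointwise, the value $\mathbf{L}(U,\operatorname{id}_U)$ is the limit in $\mathbb{D}$ of the evaluated diagram $\mathbf{H}_{\mathbf{\mathcal{I}}}^{(U,\operatorname{id}_U)}$, whose objects are $\mathbf{F}(U_i)$ and $\mathbf{F}(U_i\times_U U_j)$ with the two parallel families of restriction maps. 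That limit being $\mathbf{F}(U)$ with the canonical cone $(\mathbf{F}(\iota_i^\op))_i$ is precisely the statement that Diagram~(3) in the definition of a sheaf is an equalizer. So the hypothesis for all $\EuScript{U}$ forces the equalizer condition for all coverings, i.e. $\mathbf{F}$ is a sheaf. The main obstacle is the bookkeeping in this last step: one must be careful that the limit hypothesized in (2) really does restrict to the ordinary limit of the evaluated diagram at $(U,\operatorname{id}_U)$ — i.e. that the cone legs $\Psi_i$ evaluated at $(U,\operatorname{id}_U)$ are exactly $\mathbf{F}(\iota_i^\op)$ — which requires tracing through Definition~\ref{standsheaf1}(2) and the identification $\Goi{\mathbf{H}_{\mathbf{\mathcal{I}}}}{i}(U,\operatorname{id}_U^\op) = (\iota_i)_\smallstar(\mathbf{F}|_{\iota_i})(U,\operatorname{id}_U^\op) = \mathbf{F}(U_i)$; and, symmetrically, that the Čech-type maps match the $p_i$-composites in Diagram~(3). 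Once the cone legs are matched, the equivalence "limit exists with this cone $\iff$ equalizer diagram" is immediate, and the proof concludes by invoking the definition of a sheaf directly.
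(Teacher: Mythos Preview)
The paper gives no proof; it states the proposition as ``a direct corollary of Definition-Lemma~\ref{standsheaf1}''. The intended argument is essentially tautological once one remembers that the paper's own definition of \emph{sheaf} is literally ``$\mathbf{F}\circ\gcov{\EuScript{U}}$ is a $\mathbb{D}$-gluing functor for every cover $\EuScript{U}$'', i.e.\ the limit of the \v{C}ech diagram \emph{exists}. Since limits in $\Psh{(\mathbb{C}\downarrow U)}{\mathbb{D}}$ are computed pointwise, $\mathbf{C}^{\mathbf{F}}_{\EuScript{U}}$ has a limit iff for every $(V,\delta)\in(\mathbb{C}\downarrow U)$ the evaluated diagram $\mathbf{H}_{\mathcal{I}}^{(V,\delta^\op)}$ has one in $\mathbb{D}$; and that evaluated diagram is exactly $\mathbf{F}\circ\gcov{\EuScript{U}_V}$ for the base-changed cover $\EuScript{U}_V$. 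Base-change stability of covers then makes the two conditions interchangeable.

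Your $(1)\Rightarrow(2)$ is fine, if heavier than necessary. Your $(2)\Rightarrow(1)$, however, has a genuine gap --- not the ``bookkeeping'' you flag, but something more serious. You correctly observe that the limit of $\mathbf{C}^{\mathbf{F}}_{\EuScript{U}}$, evaluated at $(U,\operatorname{id}_U)$, is the limit in $\mathbb{D}$ of the diagram with vertices $\mathbf{F}(U_i)$ and $\mathbf{F}(U_i\times_U U_j)$. But you then try to conclude that this limit \emph{is} $\mathbf{F}(U)$ with cone legs $\mathbf{F}(\iota_i^\op)$, and you describe verifying this as a matter of ``tracing through'' identifications. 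It is not: nothing in the hypothesis that $\mathbf{C}^{\mathbf{F}}_{\EuScript{U}}$ admits \emph{some} limit $(\mathbf{L},\Psi)$ forces $\mathbf{L}\simeq\mathbf{F}$ or $\Psi_i=\mathbf{F}(\iota_i^\op)$. Indeed, if $\mathbb{D}$ is complete then $\Psh{(\mathbb{C}\downarrow U)}{\mathbb{D}}$ is complete, condition (2) holds trivially for \emph{every} presheaf $\mathbf{F}$, and the classical equalizer condition with $\mathbf{F}(U)$ on the left certainly need not hold. The identification you propose, $\Goi{\mathbf{H}_{\mathcal{I}}}{i}(U,\operatorname{id}_U^\op)=\mathbf{F}(U_i)$, only pins down the \emph{targets} of the cone legs, not the legs themselves nor the apex.

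The resolution is that the paper is \emph{not} using the classical equalizer formulation here; it is using its own definition of sheaf (``the limit of $\mathbf{F}\circ\gcov{\EuScript{U}}$ exists''), under which your pointwise-evaluation step already finishes the argument with no cone-leg matching required. Your attempt to upgrade to the stronger classical statement is where the proof breaks.
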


\subsection{Effective gluing for (pre)sheaves}
We now define the notion of \emph{effective gluing} in the context of presheaves.

\begin{definition}
We adopt the same notation as in Definition~\ref{standsheaf1}.  
Let $\mathcal{I} :=$ $ (\EuScript{U}, \mathrm{I}, \mathbf{S}, \mathbf{\Phi})$ be a gluing datum. We say that $\mathbf{H}_{\mathcal{I}}$ is an \textsf{effective split $\mathbb{E}$-gluing functor with respect to $\mathcal{I}$} if for all $i, j, k \in \mathrm{I}$:
\begin{enumerate}
    \item \textsf{Identity condition:} $\Phi_{i,i}= \operatorname{id}$,
    \item \textsf{Cocycle condition:} the following diagram commutes:
    \[
    \xymatrix{
        \subsm{\mathbf{S}}{i}|_{\proj{1}{U_i \subsm{\times}{U} U_j \subsm{\times}{U} U_k}} \ar[rr]^{\Phi_{i,j}|_{\proj{1}{U_i \subsm{\times}{U} U_j \subsm{\times}{U} U_k}}} 
        && \subsm{\mathbf{S}}{j}|_{\proj{1}{U_j \subsm{\times}{U} U_i \subsm{\times}{U} U_k}} \ar[dl]^{\Phi_{j,k}|_{\proj{1}{U_j \subsm{\times}{U} U_k \subsm{\times}{U} U_i}} } \\
        & \subsm{\mathbf{S}}{k}|_{\proj{1}{U_k \subsm{\times}{U} U_i \subsm{\times}{U} U_j}} \ar[ul]^{ \Phi_{k,i}|_{\proj{1}{U_k \subsm{\times}{U} U_i \subsm{\times}{U} U_j}} }
    }
    \]
\end{enumerate}
\end{definition}
Passing from a simple gluing to an effective gluing for presheaves allows us to identify the restriction of the glued presheaf to each component of the cover with the direct image of the corresponding member of the glued family.
\begin{proposition} \label{preashf}
We adopt the same notation and assumptions as in Definition~\ref{standsheaf1} and Definition-Lemma~\ref{standsheaf2}. Let $\mathcal{I} :=$ $ (\EuScript{U}, \mathrm{I}, \mathbf{S}, \mathbf{\Phi})$ be a gluing datum. The following statements are equivalent:
\begin{enumerate}
    \item $\mathbf{H}_{\mathcal{I}}$ is an effective split $\mathbb{E}$-gluing functor with respect to $\mathcal{I}$;
    \item For all $i \in \mathrm{I}$, the morphism $\subsm{\dindgiv{\Psi}{\mathbf{L}}{{\mathbf{\mathcal{I}}}}{i}{}|}{\iota_i}$ is a natural correspondence from $\subsm{\dindsi{\mathbf{L}}{{\mathbf{\mathcal{I}}}}{}|}{ \iota_i}$ to $\subsm{(\subsm{\iota_i}{\smallstar}  \mathbf{S}_i)|}{ \iota_i}$.
\end{enumerate}
\end{proposition}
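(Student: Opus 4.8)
The plan is to unwind both conditions pointwise at each object $(V,\delta^\op) \in (\mathbb{C}\downarrow U)$ and reduce to the corresponding statement about the split $\mathbb{D}$-gluing functor $\mathbf{H}_{\mathcal{I}}^{V}$ obtained by evaluating at $V$ (in the sense of the induced split $\mathbb{D}$-gluing functor of Section~6.1). The key observation is that condition (2) asserts, for each $i$, that the natural transformation $\subsm{\dindgiv{\Psi}{\mathbf{L}}{{\mathbf{\mathcal{I}}}}{i}{}|}{\iota_i}$ lands in the sub(pre)sheaf $\subsm{(\subsm{\iota_i}{\smallstar}\mathbf{S}_i)|}{\iota_i}$ and, more to the point, is a natural correspondence (i.e. an isomorphism onto it), which at the level of sections over each $(V,\delta^\op)$ of $(\mathbb{C}\downarrow U_i)$ is exactly the statement that the component $\dindgiv{\Psi}{\mathbf{L}}{{\mathbf{\mathcal{I}}}}{i\ (V,\delta^\op)}{}$ is a bijection onto $\Goi{\mathbf{H}_{\mathbf{\mathcal{I}}}}{i}(V,\delta^\op) = \mathbf{S}_i(U_i\times_U V)$ (or the relevant restricted version). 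So the whole statement decomposes into a family of statements, one for each evaluation, each of which is the ``strong effective $\Longleftrightarrow$ effective'' kind of equivalence already developed in Section~4.2 for split gluing functors valued in concrete categories.

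First I would make the reduction precise: unravel the definition of $\dindsi{\mathbf{L}}{{\mathbf{\mathcal{I}}}}{}$ from Definition-Lemma~\ref{standsheaf2} as a subobject of $\prod_{i\in \mathrm{I}} \mathbf{S}_i(U_i\times_U V)$ cut out by the $\Phi_{i,j}$-twisted matching conditions, and restrict to $\iota_i$, which amounts to base-changing everything along $\iota_i$. Then $\subsm{\dindgiv{\Psi}{\mathbf{L}}{{\mathbf{\mathcal{I}}}}{i}{}|}{\iota_i}$ is the projection $\subsm{(s_k)}{k\in\mathrm{I}} \mapsto s_i$ on these restricted sections. The content of ``natural correspondence'' is then: given $s_i \in \mathbf{S}_i$ over a base change of $U_i$, there is a \emph{unique} compatible family $\subsm{(s_k)}{k}$ extending it. Uniqueness is automatic because $\mathbf{S}_i$ is separated and the morphisms $\Phi$ are isomorphisms; existence is exactly where the cocycle condition enters, via the standard argument of transporting $s_i$ along $\Phi_{i,j}$ to define $s_j$ on overlaps and then checking these glue (using separatedness/gluability of each $\mathbf{S}_j$, just as in the proof of Definition-Lemma~\ref{standsheaf2}).

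So the two implications would run as follows. For $(1)\Rightarrow(2)$: assume the identity and cocycle conditions on $\mathbf{\Phi}$; given $s_i$ over some base change of $U_i$, define, for each $j$, the section $s_j$ over the corresponding base change of $U_j$ by transporting along $\Phi_{i,j}$ restricted appropriately, note by the identity condition $s_i$ is recovered for $j=i$, and verify the family $\subsm{(s_k)}{k}$ satisfies all $\Phi_{j,k}$-matching conditions on triple overlaps --- this is precisely the commutativity of the cocycle triangle, combined with the separatedness of $\mathbf{S}_k$ over the triple-overlap cover (mirroring the ``$\mathbf{s}\in\dindsi{\mathbf{L}}{{\mathbf{\mathcal{I}}}}{}(V)$'' step in the proof of Definition-Lemma~\ref{standsheaf2}). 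Hence the projection is surjective onto $\subsm{(\subsm{\iota_i}{\smallstar}\mathbf{S}_i)|}{\iota_i}$; injectivity is clear. For $(2)\Rightarrow(1)$: if each projection-to-$i$th-coordinate map is an isomorphism onto the $i$th piece, then one can recover $\Phi_{i,j}$ intrinsically as a composite $(\subsm{\dindgiv{\Psi}{\mathbf{L}}{{\mathbf{\mathcal{I}}}}{j}{}|}{\cdots}) \circ (\subsm{\dindgiv{\Psi}{\mathbf{L}}{{\mathbf{\mathcal{I}}}}{i}{}|}{\cdots})^{-1}$ on the appropriate overlap, and the identity and cocycle conditions on the $\Phi$'s then follow formally from the fact that these composites of $\mathbf{\Psi}$-components manifestly satisfy them (the identity condition because the $i=j$ composite is the identity, the cocycle condition because the triangle of composites telescopes).

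The main obstacle I anticipate is bookkeeping rather than conceptual: tracking the precise base changes and restrictions --- in particular making sure that $\Phi_{i,j}$ restricted to $\proj{1}{U_i\times_U U_j\times_U U_k}$, which is what appears in the cocycle diagram, is compatible with the restriction of $\Phi_{i,j}$ that appears when one transports a section, and that the triple-overlap covers used (of the form $(U_i\times_U U_j\times_U U_k \to U_i\times_U U_j)$ as in the proof of Definition-Lemma~\ref{standsheaf2}) are genuinely covers in $(\mathbb{C}\downarrow U)$. These are exactly the kinds of verifications carried out in the proof of Definition-Lemma~\ref{standsheaf2}, so the right strategy is to cite that proof for the gluability mechanics and only spell out the new ingredient, namely how the cocycle condition on $\mathbf{\Phi}$ is equivalent to the compatibility of the transported family. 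Given the paper's stated convention of omitting proofs that follow from the structure of the statement, it may in fact suffice to present this as a corollary of Definition-Lemma~\ref{standsheaf2} together with the Section~4.2 equivalences applied evaluation-wise, with the argument above as a sketch.
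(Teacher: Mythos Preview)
Your proposal is correct in outline and your treatment of $(2)\Rightarrow(1)$ is exactly the paper's: write $\Phi_{i,j}$ as $\left.\dindiv{\Psi}{\mathbf{L}}{{\mathbf{\mathcal{I}}}}{j}\right|_{\cdots}\circ \bigl(\left.\dindiv{\Psi}{\mathbf{L}}{{\mathbf{\mathcal{I}}}}{i}\right|_{\cdots}\bigr)^{-1}$ using the commutative triangle recorded in Definition--Lemma~\ref{standsheaf2}, and read off the identity and cocycle conditions from the telescoping of composites.

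For $(1)\Rightarrow(2)$, however, you take a noticeably longer route than the paper. The paper simply writes down the candidate inverse
\[
{\Theta_i}_{(V,\delta^\op)}:\ s_i\ \longmapsto\ \bigl(\Phi_{j,i}(s_i)\bigr)_{j\in\mathrm{I}},
\]
observes that the cocycle condition alone shows the image lands in $\dindsi{\mathbf{L}}{{\mathbf{\mathcal{I}}}}{}|_{\iota_i}(V,\delta^\op)$, and that $\Phi_{i,i}=\mathrm{id}$ makes $\Theta_i$ a two-sided inverse to $\dindgiv{\Psi}{\mathbf{L}}{{\mathbf{\mathcal{I}}}}{i}{}|_{\iota_i}$. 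There is no appeal to separatedness of $\mathbf{S}_k$ over triple-overlap covers: the matching condition you need to verify is an equality of two sections, each already defined by transport along $\Phi$, and the cocycle triangle gives that equality on the nose once everything is restricted to the triple overlap. Your ``mirroring the $\mathbf{s}\in\dindsi{\mathbf{L}}{{\mathbf{\mathcal{I}}}}{}(V)$ step'' from the proof of Definition--Lemma~\ref{standsheaf2} conflates two different situations: there, separatedness was needed to \emph{descend} a family of sections from a cover; here, you are only checking an identity between sections that are already given. Similarly, your framing via the Section~4.2 equivalences is a detour --- the notion of ``effective'' for $\mathbf{H}_{\mathcal{I}}$ in Section~6.3 is formulated directly in terms of the identity and cocycle conditions on $\mathbf{\Phi}$, not via the pullback-style conditions of Definition~\ref{cocycle}, so that reduction does not buy you anything. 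The direct construction of $\Theta_i$ is both shorter and makes the role of each hypothesis (cocycle for well-definedness, identity for the inverse property) transparent.
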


\begin{proof}
\textsf{(1) $\Rightarrow$ (2)}\quad
Assume that $\mathbf{H}_{\mathcal{I}}$ is an effective split $\mathbb{E}$-gluing functor with respect to $\mathcal{I}$. We prove that $\subsm{\dindgiv{\Psi}{\mathbf{L}}{{\mathbf{\mathcal{I}}}}{i}{}|}{\iota_i}$ is a natural correspondence.

To this end, we construct its inverse: for each $(V, \delta^\op) \in (\mathbb{C} \downarrow U_i)$, define
\[
\begin{array}{rcl}
{\Theta_i}_{(V, \delta^\op)} : \subsm{(\subsm{\iota_i}{\smallstar} \mathbf{S}_i)|}{\iota_i}(V, \delta^\op) & \longrightarrow & \subsm{\dindsi{\mathbf{L}}{{\mathbf{\mathcal{I}}}}{}|}{\iota_i}(V, \delta^\op) \\
s_i & \longmapsto & (\Phi_{j,i}(s_i))_{j \in \mathrm{I}}.
\end{array}
\]
The cocycle condition ensures that this map is well-defined; that is, the image lies in $\subsm{\dindsi{\mathbf{L}}{{\mathbf{\mathcal{I}}}}{}|}{\iota_i}(V, \delta^\op)$.

Since $\Phi_{i,i} = \operatorname{id}$ and by the definition of $\dindsi{\mathbf{L}}{{\mathbf{\mathcal{I}}}}{}$, it follows that the composition of $\dindgiv{\Psi}{\mathbf{L}}{{\mathbf{\mathcal{I}}}}{i}{}$ with $\Theta_i$ is the identity in both directions.

\medskip

\textsf{(2) $\Rightarrow$ (1)}\quad
Assume that for all $i \in \mathrm{I}$, the morphism $\subsm{\Psi_i|}{\iota_i}$ is a natural correspondence from $\subsm{\mathbf{S}|}{\iota_i}$ to $\subsm{(\subsm{\iota_i}{\smallstar} \mathbf{S}_i)|}{\iota_i}$. Then, from the final diagram in Definition-Lemma~\ref{standsheaf2}, we have:
\[
\Phi_{i,j} = \left.\dindiv{\Psi}{\mathbf{L}}{{\mathbf{\mathcal{I}}}}{j}\right|_{\iota_j \circ \proj{1}{\subsm{U}{j} \subsm{\times}{U} U_i}} \circ \left(\left.\dindiv{\Psi}{\mathbf{L}}{{\mathbf{\mathcal{I}}}}{i}\right|_{\iota_i \circ \proj{1}{\subsm{U}{i} \subsm{\times}{U} U_j}}\right)^{-1},
\]
and the cocycle condition follows easily.
\end{proof}

\section{Gluing of enriched presheaf}
In categories such as locally ringed spaces or schemes, one typically considers both a presheaf and the topological space on which it is defined. This perspective offers flexibility, particularly when working over varying topological spaces. Motivated by this viewpoint, we introduce the notion of an {enriched (pre)sheaf}, defined simply as a pair consisting of an object in a site and a presheaf on that object.

\begin{definition}\text{}
\begin{itemize}
\item An {\sf enriched (separated) (pre)sheaf over $(\mathbb{C}, \subsm{\mathbf{Cov}}{}(\mathbb{C}))$ with values in $\mathbb{D}$} is defined to be a pair $(U, \mathbf{S})$, where $U$ is an object of $\mathbb{C}$, and $\mathbf{S}$ is a (separated) (pre)sheaf on $(\mathbb{C}, \subsm{\mathbf{Cov}}{}(\mathbb{C}))$ with values in $\mathbb{D}$.
\item A {\sf morphism of enriched presheaves $f : (U, \mathbf{S}) \rightarrow (V, \mathbf{T})$} is a pair $(f, f^\sharp)$ where $f: U \rightarrow V$ is a morphism in $\mathbb{C}$ and $f^\sharp: \mathbf{T} \rightarrow f \smallstar \mathbf{S}$ is a natural transformation. 
\end{itemize}
\end{definition}

\begin{definition}\text{}
\begin{enumerate}
    \item The category of enriched presheaves on $\mathbb{C}$ with values in $\mathbb{D}$ is the functor category denoted by $\EPsh{\mathbb{C}}{\mathbb{D}}$.
    
    \item The category of separated enriched presheaves on $\mathbb{C}$ with values in $\mathbb{D}$, denoted $\ESPsh{S}{\mathbb{D}}$, is the full subcategory of $\EPsh{\mathbb{C}}{\mathbb{D}}$ consisting of separated presheaves.
    
    \item The category of enriched sheaves on $\mathbb{C}$ with values in $\mathbb{D}$, denoted $\ESh{\mathbb{C}}{\mathbb{D}}$, is the full subcategory of $\EPsh{\mathbb{C}}{\mathbb{D}}$ consisting of sheaves on the site.
\end{enumerate}
\end{definition}

Given a functor from a split truncated power category to an enriched presheaf category, one can naturally induce a functor from the same category to the underlying site.

\begin{definition}
Let:
\begin{itemize}
    \item $\mathbb{E}^e\in \{ \EPsh{\mathbb{C}}{\mathbb{D}}, \ESPsh{\mathbb{C}}{\mathbb{D}}, \ESh{\mathbb{C}}{\mathbb{D}} \}$,
    \item $\mathbf{G}$ be a functor from $\mathbb{S}_2(\mathrm{I})$ to $\mathbb{E}^e$.
\end{itemize}

For each $a \in \mathbb{S}_2(\mathrm{I})$, we write $\mathbf{G}(a) = (U_a, \mathbf{S}_a)$.
We define $\mathbf{G}_s : \mathbb{S}_2(\mathrm{I}) \to \mathbb{C}$ as the functor assigning $\mathbf{G}_s(a) := U_a$,
\end{definition}

Given a functor from a split truncated power category to an enriched presheaf category, one can, under suitable additional assumptions described below, naturally induce a pair of functors from the same category to the underlying site and presheaf categories, respectively. Under these assumptions, it follows directly from the definitions that a glued object in the enriched category is simply the pair consisting of the glued objects of the two induced functors.

\begin{deflem}
Let:
\begin{itemize}
    \item $\mathbb{E}^e\in \{ \EPsh{\mathbb{C}}{\mathbb{D}}, \ESPsh{\mathbb{C}}{\mathbb{D}}, \ESh{\mathbb{C}}{\mathbb{D}} \}$,
    \item $\mathbf{G}$ be a functor from $\mathbb{S}_2(\mathrm{I})$ to $\mathbb{E}^e$.
\end{itemize}

For each $a \in \mathbb{S}_2(\mathrm{I})$, write $\mathbf{G}(a) = (U_a, \mathbf{S}_a)$, and for each $f : a \rightarrow a'$ in $\mathbb{S}_2(\mathrm{I})$, write $\mathbf{G}(f) = (\varphi_f, \psi_f)$.

Assume:
\begin{itemize} 
    \item $\mathbf{G}_s$ is a split $\mathbb{C}$-gluing functor with $\lim \mathbf{G} = (U,(\iota_i)_{i\in {\rm I}})$,
    \item $(U, \subsm{(\mathbf{G}(i), \iota_i)}{i\! \in\! \mathrm{I}}) \in \subsm{\mathbf{Cov}}{}(\mathbb{C})$,
    \item $\mathbf{S}_{(i,j)} = \subsm{\subsm{\mathbf{S}}{i}|}{ \proj{1}{U_i\subsm{\times}{U} U_j}}$.
\end{itemize}

We set:
\begin{itemize} 
  \item $\mathbb{E}$ to be the corresponding category to $\mathbb{E}^e$  in $\{\Psh{(\mathbb{C}\! \downarrow \! U)}{\mathbb{D}}, \SPsh{(\mathbb{C}\! \downarrow \! U)}{\mathbb{D}}, \Sh{(\mathbb{C}\! \downarrow \! U)}{\mathbb{D}}\}$,
	\item $\EuScript{U} := (U, \subsm{(\mathbf{G}(i), \iota_i)}{i \in \mathrm{I}})$,
	\item $\mathbf{S} := \subsm{\left( \mathbf{S}_i\right)}{i \in {\rm I}}$,
	\item $\mathbf{\Phi} := \subsm{\left(\subsm{(\subsm{\iota}{i} \circ \proj{1}{U_i\subsm{\times}{U} U_j} )}{\smallstar}  \psi_{\tau_{i,j}}\right)}{(i,j) \in {\rm I\times I}}$,
	\item ${\mathcal{I}} :=$ $ (\EuScript{U}, \mathbf{S}, \mathbf{\Phi})$.
\end{itemize}

We define $\mathbf{G}_p : \mathbb{S}_2(\mathrm{I}) \to \mathbb{E}$ to be the functor $\mathbf{H}_{\mathbf{\mathcal{I}}}$. We have, $\mathbf{G}_s$ is split $\mathbb{D}$-gluing functors and $\mathbf{G}_p$ is a split $\mathbb{E}$-gluing functors if and only if $\mathbf{G}$ is a split $\mathbb{E}^e$-gluing functor.
\end{deflem}

\begin{remark}
To derive the gluing construction for locally ringed spaces from the previous result, observe that the category of locally ringed spaces can be viewed as a subcategory of an enriched presheaf category, where the Grothendieck site is that of topological spaces (Example \ref{settop}), and the presheaves take values in $\mathbb{Rings}$. The additional structure consists of the requirement that for each point, the stalk of the presheaf is a local ring, and morphisms of such presheaves induce local ring homomorphisms on stalks.

We have already described how to glue both the underlying topological spaces and the associated presheaves. The key remaining observation is that the stalks of the glued presheaf are naturally isomorphic to the stalks of the presheaves in the gluing data. This ensures that the locality condition is preserved under gluing. Consequently, the gluing construction extends naturally to locally ringed spaces, and hence also to schemes and morphisms of schemes.
\end{remark}

\bibliographystyle{abnt-num}

\end{document}